\documentclass[11pt,a4paper]{article}
\errorcontextlines=999
\pdfoutput=1

\usepackage[T1]{fontenc}
\usepackage[utf8]{inputenc}
\usepackage{amsmath}
\usepackage[english]{babel}
\usepackage{amsfonts}
\usepackage{amssymb}
\usepackage{graphicx}
\usepackage{xurl}
\usepackage{amssymb,amsmath,mathtools,proof,amsthm}
\usepackage{makeidx}
\usepackage{lscape}
\usepackage{latexsym}
\usepackage{hyperref}
\usepackage[margin=1in]{geometry}
\usepackage[dvipsnames]{xcolor}
\usepackage[protrusion=true,expansion=true]{microtype}
\usepackage{hyperref}
\usepackage{orcidlink}
\usepackage{caption}
\usepackage{multicol}

\theoremstyle{definition}
\newtheorem{definition}{Definition}
\newtheorem{theorem}{Theorem}
\newtheorem{lemma}{Lemma}

\newtheorem{corollary}{Corollary}
\newtheorem{notation}{Notation}

\newcommand{\NN}{\mathbb{N}}

\newcommand{\TCF}{\textsf{TCF}}

\newcommand{\BB}{\mathbb{B}}
\newcommand{\true}{\operatorname{tt}}
\newcommand{\false}{\operatorname{ff}}
\newcommand{\andnc}{\wedge^{\operatorname{nc}}}
\newcommand{\andb}{\wedge^b}
\newcommand{\exd}{\exists}
\newcommand{\exl}{\exists}
\newcommand{\exbn}[1]{\exists^{<#1}}
\newcommand{\exbp}[1]{\exists^{\leq #1}}
\newcommand{\exnc}{\exists^{\operatorname{nc}}}
\newcommand{\orr}{\vee}
\newcommand{\ord}{\vee}
\newcommand{\oru}{\vee}
\newcommand{\orb}{\vee^{\operatorname{b}}}
\newcommand{\andr}{\wedge}
\newcommand{\Prime}[1]{\textbf{P}(#1)}
\newcommand{\Primes}[2]{\textbf{P}_{#2}(#1)}
\newcommand{\Pms}{\operatorname{Pms}}
\newcommand{\lf}{\operatorname{LF}}
\newcommand{\Suc}{\operatorname{S}}
\newcommand{\SZero}{\operatorname{S}_0}
\newcommand{\SOne}{\operatorname{S}_1}
\newcommand{\SPos}{\operatorname{S}_{0/1}}
\newcommand{\PP}{\mathbb{P}}
\newcommand{\IsSquare}[1]{\operatorname{IsSq}(#1)}
\newcommand{\falsum}{\textbf{F}}
\newcommand{\Total}{\textbf{T}}
\newcommand{\PosToNat}{\operatorname{PosToNat}}
\newcommand{\NatToPos}{\operatorname{NatToPos}}
\newcommand{\realizer}[1]{{#1}^{\textbf{r}}}
\newcommand{\et}[1]{\operatorname{et}(#1)}
\newcommand{\sep}{, }
\newcommand{\Href}[2]{\texorpdfstring{\href{#1}{\texttt{#2}}}{#2}}

\title{Verified Program Extraction in Number Theory: The Fundamental Theorem of Arithmetic and Relatives\footnote{The research for this document was funded by the Austrian Science Fund (FWF) \textbf{10.55776/ESP576}.\\
We would like to thank \emph{Helmut Schwichtenberg} for valuable advice on writing the Minlog code and for integrating it into the official Minlog version, as well as \emph{Iosif Petrakis} for proofreading the paper.\\
I am also grateful to the anonymous reviewers for their helpful comments,
which significantly improved and extended this article.}}

\author{
Franziskus Wiesnet \orcidlink{0000-0003-3870-6984}\\
{\normalsize \href{mailto:franziskus.wiesnet@tuwien.ac.at}{franziskus.wiesnet@tuwien.ac.at}}\\
{\normalsize Vienna University of Technology}\\
{\normalsize Wiedner Hauptstraße 8-10, 1040 Vienna, Austria}
}
\date{}
\begin{document}
\maketitle
\begin{abstract}
This article revisits standard theorems from elementary number theory from a constructive, algorithmic, and proof-theoretic perspective, framed within the theory of computable functionals $\TCF$. Key examples include Bézout’s identity, the fundamental theorem of arithmetic, and Fermat’s factorisation method. All definitions and theorems are fully formalised in the proof assistant Minlog, laying the foundation for a comprehensive formal framework for number theory within Minlog.

While formalisation guarantees correctness, the primary emphasis is on the computational content of proofs. Leveraging Minlog's built-in program extraction, we obtain executable terms and export them as Haskell code.

The efficiency of the extracted programs plays a central role. We show how performance considerations influence even the original formulation of theorems and proofs. In particular, we compare formalisations based on binary encodings of natural numbers with those using the traditional unary (successor-based) representation.

We present several core proofs in detail and reflect on the challenges that arise from formalisation in contrast to informal reasoning. The complete formalisation is available online and linked throughout. Minlog's tactic scripts are designed to follow the structure of natural-language proofs, allowing each derivation step to be traced precisely and thereby bridging the gap between formal and classical mathematical reasoning.\vspace{1mm}

\textbf{Keywords:}
Proof assistants\sep
Minlog\sep
Theory of computational functionals\sep 
Program extraction\sep
Program verification\sep
Fundamental theorem of arithmetic\sep
Factorisation methods
\end{abstract}
\section{Introduction}
\label{Sec:Intro}

\subsection{Overview and Motivation}
This article is based on the formalisation of statements from elementary arithmetic in Minlog.
To date, Minlog has primarily been applied to the formalisation of analysis. The only existing work addressing number theory in Minlog  -- specifically, the greatest common divisor -- was carried out by Helmut Schwichtenberg and Ulrich Berger \cite{berger1996greatest}, and it mainly focused on program extraction from classical proofs.
However, given the functions that Minlog already provides, applying it to elementary number theory is a very natural next step:

Minlog is a proof assistant specifically designed for formal program extraction from implemented proofs, yielding extracted terms that can be executed as Haskell programs. Elementary number theory is particularly well suited as a case study because many of its central theorems have an inherently algorithmic character and are naturally stated in an existence-oriented form (e.g.~greatest common divisors, Bézout coefficients, and factorisation), making them amenable to program extraction.
More generally, Minlog supports two complementary verification paradigms. On the one hand, one can obtain certified programs by extracting them from constructive proofs. On the other hand, one can implement an algorithm directly and subsequently verify it by proving the desired properties. We will combine both approaches in this article. In this way, the extracted terms remain explicit enough for the computational content to stay transparent.

Moreover, Minlog's underlying theory -- most notably the ability to mark predicates as computationally relevant or computationally irrelevant -- provides a precise view of which parts of a proof contribute to the computational content. This makes it possible to organise proofs in a way that keeps the extracted term as efficient as possible. We will return to these issues repeatedly throughout the paper.
A closely related point is the representation of natural numbers. In particular, Minlog provides positive binary numbers as a built-in data type; consequently, extracted terms directly operate on the binary representation, which is crucial for practical performance. In many proofs we will even switch between different number representations, which is straightforward in Minlog. Some parts are carried out in unary form with zero and successor to simplify reasoning, while others are carried out in binary form to obtain efficient extracted code.
\subsection{Comparison with Other Proof Assistants and Novelties of this Article}
\label{Sec:Novelties}
Table \ref{tab:PAs} lists the key theorems and algorithms in this article and indicates where they have been implemented in other proof assistants. A dash (--) means that, at present, no implementation is known. Of course, the existence of such an implementation can never be ruled out with certainty.
The choice of proof assistants in Table \ref{tab:PAs}  was made based on their prominence and their similarity to Minlog.

\begin{center}\captionsetup{hypcap=false}
\small
\begin{tabular}{|l|l|c|c|c|c|c|c|}
\hline
Object & Minlog & Agda & Isabelle & Lean &  Mizar & Naproche & Rocq \\
\hline \hline
Euclidean Algorithm & Definition \ref{Def:NatGcd}  & \cite{alexandru2025intrinsically} & \cite{eberl2025theory} & \cite{mathlibcommunity2026extended} & \cite{trybulec1993euclids} &\cite{koepke2023formalizing} & \cite{rpdcc2025numtheory}\\
\hline
Stein's Algorithm & Definition \ref{Def:PosGcd}  & -- & \cite{paulson2023verifying} & -- & -- & -- & \cite{rpdcc2025stein}\\
\hline
Bézout's Identity& Theorems \ref{Thm:NatGcdToLinComb}, \ref{Thm:PosGcdToLinComb} & \cite[13.26]{rijke2026agda} & \cite{tabacznyj2025theoryb} & \cite{mathlibcommunity2026extended} & \cite{okazaki2012extended} &\cite{koepke2023formalizing} & \cite{rpdcc2025numtheory}\\
\hline
Infinitude of Primes & Theorem \ref{Thm:PrimesToNewPrimes} & \cite[13.100]{rijke2026agda}& \cite{wenzel2025theory} & \cite{mathlibcommunity2026inifite} & \cite{chmur1991lattice} &\cite{delon2021isabelle/naproche,koepke2023formalizing} & \cite{mcdt2020library}\\
\hline
Euclid's Lemma & Lemma \ref{Lem:PrimeToIrred}  & \cite[13.25]{rijke2026agda} & \cite{tabacznyj2025theorya} & \cite{mathlibcommunity2026prime}& \cite{chmur1991lattice} &\cite{koepke2025formalizing} & \cite{rpdcc2025numtheory}\\
\hline
FTA & Theorems \ref{Thm:ExPrimeFac}, \ref{Thm:PrimeFactorisationsToPms} &\cite[13.77]{rijke2026agda} & \cite{tabacznyj2025theorya} & \cite{mathlibcommunity2026factors} & \cite{kornilowicz2004fundamental} & -- & \cite{mcdt2020library}\\
\hline
Fermat Factorisation & Theorem \ref{Thm:Fermat}& -- & -- & -- & -- & -- & --\\
\hline
\end{tabular}
\captionof{table}{Main implementations in Minlog from this article compared with other proof assistants}
\label{tab:PAs}
\captionsetup{hypcap=true}
\end{center}
\noindent
There are many other proof assistants in which these theorems and definitions have been formalised. For implementations in additional proof assistants, we refer to Wiedijk’s 100 Theorems \cite{wiedijk2025formalizing}. In particular, it provides an overview of the greatest common divisor algorithm, Bézout’s theorem, the infinitude of primes, and the fundamental theorem of arithmetic (FTA).

Based on our selection, we can already observe the following: the typical theorems and algorithms (such as the Euclidean algorithm, its extension yielding Bézout’s identity, the infinitude of primes, Euclid's lemma, and in many cases also the FTA) have been implemented in most proof assistants. In contrast, Stein’s algorithm, which is an efficient variant of the Euclidean algorithm for binary integers, has been treated much less often. In general, natural numbers are most often treated in the unary representation with zero and successor, rather than in a binary representation. This suggests that, so far, implementations have tended to focus less on the efficiency of the algorithms and more on the correctness of the statements. This is also consistent with the fact that, so far, apart from trial division in the context of the fundamental theorem of arithmetic, no other factorisation algorithm for integers has been implemented in a proof assistant. With its implementation of Fermat’s factorisation method, this article takes a first step in this direction.

Another notable aspect of Table~\ref{tab:PAs} is that most references are web links. To the best of our knowledge, there is no peer-reviewed survey that systematically covers a broad suite of elementary number-theoretic theorems and algorithms within a single proof assistant and compares implementations across systems. The most sustained peer-reviewed line explicitly focused on elementary number theory appears in the Mizar ecosystem (see \cite{pak2024elementary} in addition to the sources in Table~\ref{tab:PAs}), whereas other systems are mainly documented through isolated case studies. In this article, we not only provide an overview but also discuss the efficiency of the extracted algorithms.

\subsection{Methodology of this Article}

\paragraph{Links to the Minlog implementation.}
All sources (Minlog development, extracted Haskell code, and test files)
are available on GitHub at \url{https://github.com/FranziskusWiesnet/MinlogArith/} and archived on Zenodo \cite{wiesnet2026minlogarith}.

The repository's \texttt{README.md} explains how to use the repository. The folder
\texttt{minlog} contains a snapshot of Minlog in which the relevant files are included
and working. This snapshot is provided for archival and reproducibility purposes.
In general, we recommend using the latest Minlog version from the Minlog website
\cite{miyamoto2024minlog}. The implementations discussed here are located
in \texttt{examples/arith} within the Minlog tree. Note that, as of February 2026, these
files are only available and working on Minlog's \texttt{dev} branch. Instructions for
switching to the \texttt{dev} branch (if still required) are also provided on the Minlog
website.

For all definitions and theorems in this article, the corresponding name used in the Minlog file is provided in \texttt{typewriter font}. These notations also serve as hyperlinks (permalinks) to the exact file and line in the GitHub repository.
When viewing the article in its original PDF format, clicking on any of these typewriter-font references will take the reader directly to the corresponding location.

\paragraph{Correctness of the statement.}
Formal implementation ensures the correctness of the presented theorems. This is particularly beneficial in cases involving numerous case distinctions, lengthy calculations, or subtle conditions for applying a lemma. In such situations, even a textbook-style proof can become difficult to follow and may inadvertently contain gaps. We will return to this advantage of formalisation after presenting such complex proofs.

\paragraph{Transparency of the proofs and definitions.}
Proofs constructed within a proof assistant are completely transparent, allowing every logical step to be inspected. As is common in textbook proofs, we omit minor steps in this paper. However, if a particular part of a proof or definition is unclear, the reader can consult the corresponding formal implementation for clarification.

One of the key advantages of Minlog is that its tactic scripts closely resemble textbook-style proofs. Moreover, large proofs are annotated and indented in the implementation to improve readability and facilitate comprehension.

This article also highlights that formally conducted proofs can sometimes be significantly more intricate than one might expect based on their textbook counterparts.\footnote{At this point, it is also worth noting that some formal proofs turn out to be surprisingly trivial, particularly when the statement has been carefully formulated in advance.} This added complexity is primarily due to the need for precise definitions and explicitly structured data types in formalisation. This contrast becomes particularly salient in the context of unary versus binary representations of natural numbers. For instance, our formalisation integrates structural properties of binary numbers into the proofs -- an approach rarely taken in traditional mathematical texts, where numbers are typically treated as abstract objects.

\paragraph{Generation of an extracted program.}
Minlog provides a mechanism to extract programs from proofs, as long as the proven theorem contains computational content. These programs are initially represented as terms in the language of the underlying theory \TCF. Minlog provides a built-in command to convert these terms into executable Haskell programs. Executing the extracted Haskell programs is considerably more efficient than running the corresponding terms within Minlog itself. 

The extracted Haskell files can also be found in the folder \texttt{test-files} of the GitHub repository as \href{https://github.com/FranziskusWiesnet/MinlogArith/blob/main/test-files/gcd_pos.hs}{\texttt{gcd{\_}pos.hs}}, \href{https://github.com/FranziskusWiesnet/MinlogArith/blob/main/test-files/fta_pos.hs}{\texttt{fta{\_}pos.hs}}, and \href{https://github.com/FranziskusWiesnet/MinlogArith/blob/main/test-files/factor_pos.hs}{\texttt{factor{\_}pos.hs}}.
We will discuss the application of the Haskell files after presenting some theorems with computational content in this article. The corresponding tests and their results are available in the test-files folder of the GitHub repository, in the files \href{https://github.com/FranziskusWiesnet/MinlogArith/blob/main/test-files/gcd_pos_test.txt}{\texttt{gcd\_pos\_test.txt}}, \href{https://github.com/FranziskusWiesnet/MinlogArith/blob/main/test-files/fta_pos_test.txt}{\texttt{fta\_pos\_test.txt}}, and \href{https://github.com/FranziskusWiesnet/MinlogArith/blob/main/test-files/factor_pos_test.txt}{\texttt{factor\_pos\_test.txt}}, respectively.

\paragraph{Efficiency of proofs and extracted programs.} 
This article does not formally define the efficiency of programs, but rather discusses it in terms of their runtime behaviour. For our purposes, an efficient proof is characterised by clarity and simplicity.
Therefore, we present two formalisation approaches and compare their respective trade-offs.

The first approach uses unary natural numbers, defined via zero and the successor function. This simple definition results in relatively straightforward proofs. However, the unary representation is inherently inefficient, as its length grows at least linearly with the number’s value. As a result, the runtime of many algorithms scales at least linearly with the number of successor operations, which becomes inefficient for large inputs.

The second formalisation uses positive binary numbers, represented with three constructors: 1, the 0-successor, and the 1-successor. This representation is far more efficient, since binary numbers are encoded and processed as lists of digits (0s and 1s). Consequently, algorithm runtimes scale often polynomially with the number of the input digits, which grows logarithmically with the numeric value. However, formalising binary numbers adds systematic overhead, since one must treat the two step cases (e.g.~appending 0 and appending 1) separately throughout definitions and proofs.
Moreover, for certain operations, such as bounded search from below, the binary representation offers no advantage over the unary representation of natural numbers. Because of that, we adopt a hybrid approach, leveraging the most suitable aspects of both representations, and motivate our choice of definitions based on their efficiency in specific contexts.

\paragraph{Runtime analyses.}
In Haskell's GHCi, the command \texttt{:set +s} enables timing statistics for executed expressions. After evaluating an expression, GHCi displays the corresponding runtime and memory usage. This feature was used to measure the performance of the generated Haskell programs.
Experiments were run on a computer with an Intel Core Ultra 5 125H (Meteor Lake-H) CPU (up to 4.5 GHz), 16 GB DDR5 RAM, running Linux.
While this system is modest compared with modern high-end hardware, it is nevertheless fully sufficient for our purposes.

When a polynomial runtime is expected, we have carried out a more detailed runtime analysis. A tabular and graphical presentation of the results can be found in Appendix~\ref{Sec:Runtime}. Using the computer algebra system SymPy~\cite{meurer2017sympy}, we determined a least-squares approximating polynomial of degree at most $9$ that satisfies $f(0)=0$. If the upper bound of degree 9 did not yield a meaningful result, we reduced the degree until the result became more meaningful. The code for generating the approximating polynomials and the plots is provided in the folder \texttt{test-files} of the GitHub repository in the file \href{https://github.com/FranziskusWiesnet/MinlogArith/blob/main/test-files/poly_approx.py}{\texttt{poly\_approx.py}}.
These values should be regarded as heuristic only. Because runtimes vary considerably, statistically reliable estimates would require orders of magnitude more runs and a more systematic statistical analysis, which is beyond the scope of this paper. For background on constrained least-squares polynomial approximation we refer, for example, to \cite{bjoerck1996numerical}. 

\section{Metatheoretical Background}
\subsection{The Proof Assistant Minlog}
Minlog was developed in the 1990s by members of the logic group at the Ludwig-Maximilians-University in Munich under the direction of Helmut Schwichtenberg  \cite{schwichtenberg1993proofs,schwichtenberg2006minlog}. Installation instructions and documentation are available on the official Minlog website, hosted by Ludwig-Maximilians-University \cite{miyamoto2024minlog}. Furthermore, several introductions to Minlog are available \cite{wiesnet2017konstruktive, wiesnet2018introduction,wiesnet2024minlog}. Notable researchers who made significant contributions to Minlog include Ulrich Berger, Kenji Miyamoto, and Monika Seisenberger  \cite{berger2011minlog, miyamoto2013phd, miyamoto2013program}. One of the first constructive works implemented in Minlog focused on the greatest common divisors of integers \cite{berger1996greatest}, which is related to the topic of this article. Recently, Minlog has predominantly been employed in the field of constructive analysis \cite{berger2016logic,koepp2023lookahead, miyamoto2014program,schwichtenberg2023logic,schwichtenberg2021logic, wiesnet2022limits}. However, a broad spectrum of proofs from various domains has already been implemented in Minlog \cite{berger2006program, ishihara2016embedding,schwichtenberg2019program, schwichtenberg2016higmans, schwichtenberg2017tiered}. A common trait among these implementations is Minlog’s dual emphasis on proof verification and program extraction.

Minlog is implemented in the functional programming language Scheme and supports the extraction of Haskell programs, thanks to work by Fredrik Nordvall-Forsberg.
The correctness of a proof formalised in Minlog can be verified automatically through a built-in command; in simple cases, Minlog can even autonomously search for proofs.
Its core philosophy is to interpret proofs as programs and to work with them accordingly \cite{benl1999formal}. Minlog provides all necessary tools for the formal extraction of programs from proofs. Furthermore, proofs can be normalised in Minlog. This process involves ensuring that proofs adhere to a standard or normalised form, enhancing their clarity and facilitating further analysis \cite{berger1998normalization}.

Terms in Minlog can also be transformed and evaluated. Transformations include standard $\beta$- and $\eta$-reduction of lambda expressions, as well as conversion rules for program constants.
It is important to note that, due to the conversion rules for program constants, terms in Minlog do not necessarily have a unique normal form. In our setting, however, every term has a normal form and can be evaluated, as we work exclusively with total objects, which will be introduced below.

\subsection{Theory of Computational Functionals}
Minlog is based on an extension of $\textsf{HA}^{\omega}$ known as the \emph{Theory of Computable Functionals} (\TCF), which serves as its meta-theory. Its origins go back to Dana Scott's seminal work on \emph{Logic of Computable Functionals} \cite{scott1993type} and Platek's PhD thesis \cite{platek1966foundations}.  It also builds on fundamental ideas  introduced by Kreisel \cite{kreisel1959interpretation} and Troelstra \cite{troelstra1973metamathematical}.
More recently, significant contributions to {\TCF} have been made by the Munich group, including Helmut Schwichtenberg, Ulrich Berger, Wilfried Buchholz, Basil Kar{\'a}dais, and Iosif Petrakis \cite{berger2002refined, huber2010towards, karadais2013towards, petrakis2013advances, schwichtenberg1991primitive}.
The semantics of \TCF{} builds on partial continuous functionals and information systems, following the \emph{Scott model} \cite{larsen1991using, scott1982domains}. Here we provide only an overview of the aspects relevant to this article. For a comprehensive and formal introduction, we refer to \cite{schwichtenberg2023logic, schwichtenberg2012proofs, wiesnet2017konstruktive, wiesnet2021computational}.

The logical framework for {\TCF} is the \emph{calculus of natural deduction} \cite{pelletier1999brief, prawitz2006natural, schwichtenberg2013minimal}, where the implication $\to$ and the universal quantifier $\forall$ are the only logical symbols. Other standard logical connectives such as $\wedge, \vee, \exists$, and even the falsum $\falsum$ (at least the version we use), are introduced as inductively defined predicates. Consequently, {\TCF} is a theory within minimal logic, omitting the law of excluded middle. The ex-falso axiom is a consequence of the axioms of {\TCF} and the definition of $\falsum$.
\subsubsection{Terms and Types in \TCF}
Terms in \TCF\ are based on typed $\lambda$-calculus, meaning every term is assigned a type. Types in \TCF\ are either type variables, function types, or algebras. Algebras can be seen as fixed points of their constructors. For example, the type of \emph{natural numbers} (\href{https://github.com/FranziskusWiesnet/MinlogArith/blob/1c63c302aa52e8972b594a3ef8753fa11a0435eb/minlog/lib/pos.scm#L38}{\texttt{nat}})
is defined as an algebra with the constructors $0:\NN$ and $\Suc: \NN \to \NN$. In short notation, we express this as $$\NN := \mu_{\xi}(0: \xi,\Suc : \xi\to\xi).$$
\paragraph{Positive binary numbers (\texorpdfstring{\href{https://github.com/FranziskusWiesnet/MinlogArith/blob/1c63c302aa52e8972b594a3ef8753fa11a0435eb/minlog/lib/pos.scm\#L38}{$\mathtt{pos}$}}{pos}).} Another important algebra we are dealing with in this article is the algebra of positive binary numbers
$$\PP := \mu_{\xi}(1: \xi,\SZero : \xi\to\xi, \SOne:\xi\to\xi)$$
with three constructors: the nullary constructor 
$1$ and two successor functions $\SZero$ and $\SOne$.  The successor functions append the corresponding digit to the binary representation of a positive number; in particular, $\SZero p = 2p$ and $\SOne p = 2p +1$.  Note that the representation using constructors is the reverse of the well-known binary representation, for example $\SZero\SOne 1$ represents $6$ which is usually written as $110$ in binary representation.

The reason for starting with \(1\) instead of \(0\) is primarily to ensure that each number is uniquely determined by its constructors. If we were to start with \(0\), then for example \(0\) and $\SZero 0$ would represent the same number, violating this uniqueness. This uniqueness is important to ensure that functions on positive numbers are automatically well-defined. However, the absence of $0$ introduces additional complexity in some cases (see, for example, Theorem \ref{Thm:PosGcdToLinComb}).

The third algebra we will use is \emph{Boolean algebra} $\BB$. It is simply defined by the two nullary constructors \(\true: \BB\) and \(\false: \BB\).

These three algebras form the basic types in this article. Additionally, we will use function types over these three algebras.
Since we do not want to explicitly specify the type every time we introduce a new variable, we will assign variables to a type based on their letters. This is also done in Minlog, and we follow the conventions of Minlog here, which is why some variables consist of two letters.
\begin{notation}
The following table shows the types assigned to each variable:

\begin{center}
\begin{tabular}{l@{\hskip 2cm}l@{\hskip 2cm}l}
$w: \BB$ & $l,m,n: \NN$ & $p,q,r: \PP$ \\
$ws: \NN \to \BB$ & $wf: \PP \to \BB$ & $ps,qs,rs: \NN \to \PP$
\end{tabular}
\end{center}
If other variables are used, their types are either irrelevant or will be specified explicitly. If we need multiple variables of the same type, we will assign them indices or other decorations, i.e. $wf_2'$ also has type $\PP\to \BB$.
\end{notation}

In addition to variables and constructors, the so-called \textit{defined constants} (also referred to as \textit{program constants} or simply \textit{constants}) are also terms in \TCF.
These are defined by their type and computation rules. A simple program constant is the logarithm  $\operatorname{PosLog}:\PP \to \NN$ on the positive numbers, in Minlog \href{https://github.com/FranziskusWiesnet/MinlogArith/blob/1c63c302aa52e8972b594a3ef8753fa11a0435eb/minlog/lib/pos.scm#L5097}{\texttt{PosLog}}.
It essentially returns the number of digits in the binary representation minus one. It is given by the following three computation rules
\begin{align*}
\operatorname{PosLog}\ 1 &:= 0\\
\operatorname{PosLog}\ (\SPos p) &:= \Suc(\operatorname{PosLog}\ p),
\end{align*}
where the last line represents two rules as $\SPos$ shall be $\SZero$ and $\SOne$. The basic arithmetic operations for both \(\NN\) and \(\PP\) are also defined as program constants through computation rules.

There are also versions of the conjunction, disjunction and negation that are given as program constants on the algebra $\BB$: $\andb$, $\orb$ and $\neg$. For our purposes, these program constants behave the same as the corresponding logical connectives. However, formally, they are defined by the well-known computation rules.

\paragraph{Conversion between natural numbers and positive binary numbers.}
Two important program constants are \(\PosToNat: \PP \to \NN\) and \(\NatToPos: \NN \to \PP\). As their names suggest, they convert a positive number to a natural number and vice versa. Note that \(\NatToPos(0) = 1\). Apart from this exception, the program constants behave as expected. While their precise definitions are somewhat more involved, they are not essential for our purposes here. We therefore refer the reader to the Minlog implementation for further details.

In Minlog, the program constants \href{https://github.com/FranziskusWiesnet/MinlogArith/blob/1c63c302aa52e8972b594a3ef8753fa11a0435eb/minlog/lib/pos.scm#L668}{\texttt{PosToNat}} and \href{https://github.com/FranziskusWiesnet/MinlogArith/blob/1c63c302aa52e8972b594a3ef8753fa11a0435eb/minlog/lib/pos.scm#L705}{\texttt{NatToPos}} are essential and must always be used explicitly. In contrast, in Haskell the types \texttt{nat} and \texttt{pos} are identical. As a result, $\PosToNat$ is simply the identity function, and $\NatToPos$ is also the identity function, with the exception that \(0\) is mapped to \(1\).

In this article, we will use these two functions explicitly only when necessary for clarity; otherwise, we will leave them implicit.

As already mentioned, many theorems come in two versions. To enhance readability, we will provide only the version for positive binary numbers whenever the alternative version is equivalent for our purposes. Note that a positive number \(p\) can also be understood as a natural number greater than \(0\). For example, \(\forall_p A(p)\) can also be seen as \(\forall_n(0 < n \rightarrow A(\NatToPos\ n))\). 

\subsubsection{Formulas in TCF}
In {\TCF}, predicates are defined (co-)inductively. For the purposes of this article, only a few inductively defined predicates are important. Each inductively defined predicate $I$ is specified by introduction axioms (also called clauses) of the form
$$I^+_i:\ \forall_{\vec{x}_i}((A_{ij}(I))_{j<n_i}\to I\vec{t}_i),$$
where $(C_i)_{i<n}\to D$ is a short form for $C_0\to\dots\to C_n\to D$ and $I$ can at most occur strictly positive in each $A_i(I)$. $\vec{t}_i$ denotes a (possibly empty) list of terms filling all arguments of $I$, and $\vec{x}_i$ is the list of free variables such that $I^+_i$ has no free variables. If $\vec{x}_i$ is empty, the universal quantification is omitted.

An inductively defined predicate can be viewed as the smallest (with respect to inclusion) predicate that satisfies its clauses. This interpretation is given by the elimination axiom of an inductively defined predicate. It states that any predicate satisfying the clauses is a superset of the predicate:
$$
I^-(X) :\ \left(\forall_{\vec{x}_i}\left(\left(A_{ij}(I\cap X)\right)_{j<n_i}\to X\vec{t}_i\right)\right)_{i<k}\to \forall_{\vec{x}}(I\vec{x}\to X\vec{x})
$$
Here one even has the strengthened premise with $A_{ij}(I\cap X)$ instead of $A_{ij}(X)$.
For our cases, however, this strengthening makes no difference, so we consider only the last case.
Note that in Minlog's notation, the premise $I\vec{x}$ appears at the beginning of the formula, in particular  $(A_i)_{i<k}\to \forall_{\vec{x}}(I\vec{x}\to X\vec{x})$ becomes $\forall_{\vec{x}}(I\vec{x}\to (A_i)_{i<k}\to X\vec{x})$, where the  $A_i$s as above in $I^-(X)$. These two versions are equivalent, as all $A_i$s are closed formulas.

In this article we only need a few examples of inductively defined predicates, which we present in the following. For our purposes, understanding these examples is entirely sufficient; however, anyone interested in the general underlying theory in more detail should consult \cite[Section 7.1.2]{schwichtenberg2012proofs}.
 
\paragraph{Leibniz equality and the falsum.}
A simple example of an inductively defined predicate is the Leibniz equality $\equiv$ on any fixed type $\alpha$. It is given by the only clause $\forall_{x:\alpha}x\equiv x$. The elimination axiom is $$\operatorname{Eq}^-(X): \forall_{x,y:\alpha}(x\equiv y \to \forall_xXxx \to Xxy),$$
where $X$ can be any predicate that takes two terms of type $\alpha$ as arguments.
By using the Leibniz equality on booleans, a boolean term $w$ can be identified with the formula $w\equiv \true$. In this way, we can use Boolean terms as propositions in formulas. The falsum is in particular defined by $\falsum := \false \equiv \true$, and the ex falso principle can then be proven for all relevant formulas in this article \cite[Satz 1.4.7]{wiesnet2017konstruktive}.

A particular Boolean term is decidable equality, denoted by $=$. It is given as a Boolean valued program constant by its computation rules. For natural numbers, it is specified by the rules:
\begin{align*}
\begin{matrix}
0 &=& 0& &:=& &\true\\
Sn &=& 0& &:=& &\false\\
0 &=& Sn& &:=& &\false \\
Sn &=& Sm& &:=& &n = m
\end{matrix}
\end{align*}
Similarly, it is defined on the positive integers and the booleans. For function types, there is no decidable equality. In our case there is no difference between decidable equality and Leibniz equality, if both exist. This holds especially because we consider only total objects, which are discussed in the next paragraph.
\paragraph{Totality.}
A key feature of \TCF\ is that a closed term of an algebraic type need not
normalise to a finite constructor expression of that type.
For instance, a term $n:\NN$ need not reduce to a numeral of the form
$\Suc \dots\Suc 0$ with finitely many $\Suc$. Operationally, such a term may diverge or unfold indefinitely (e.g.\ it may keep producing successors forever).
Hence, terms in \TCF\ are partial in general: from $n:\NN$ alone we cannot
assume that $n$ denotes a \emph{total} natural number. In many cases this behaviour is even desirable, and allowing non-total objects makes the theory significantly richer; see, for example, \cite{berger2016logic, koepp2023lookahead,schwichtenberg2023logic, schwichtenberg2025logic, schwichtenberg2021logic, wiesnet2022limits}.

However, we can no longer prove statements of the form $\forall_tA(t)$ by induction or case distinction on the term $t$ as we do not know, in general, how $t$ is constructed. In order to do induction after all, we will use the totality predicate $\Total$ of \TCF. Informally speaking, $\Total_{\tau} t$ for some term $t:\tau$ means that $t$ is a finite constructor expression of $\tau$ if $\tau$ is an algebra, or $t$ maps total objects to total objects if $\tau$ is a function type. Formally, the totality is defined by recursion over the type. 

On natural numbers, $\Total_{\NN} n$ is defined by the clauses $\Total_{\NN} 0$ and $\forall_n( \Total_{\NN} n \to \Total_{\NN}(\Suc n))$. On positive binary numbers $\Total_{\PP} p$ is defined by the three clauses $\Total_{\PP} 1$ and $\forall_p( \Total_{\PP} p \to \Total_{\PP}(\SPos p))$. On the booleans $\Total_{\BB} w$ is defined by $\Total_{\BB}\true$ and $\Total_{\BB} \false$.
The elimination axiom of $\Total_{\BB} w$ is then case distinction by $w=\true $ and $w=\false$, the elimination axiom of $\Total_{\NN} n$ is induction over natural numbers, and the elimination axiom of $\Total_{\PP} p$ is similar to induction but one has to consider two successor cases, one for $\SOne$ and one for $\SZero$, i.e.,
\begin{align*}
\Total_{\PP}^-(X):\ \ \forall_p\left(\Total_{\PP} p \to X1 \to \forall_q(Xq\to X(\SZero q)) \to \forall_q(Xq\to X(\SOne q)) \to Xp\right),
\end{align*}
where $X$ is any predicate on the positive binary numbers.
For the rest of this article, we implicitly assume that each quantified variable is total, except when we write a hat on it. In particular, $\forall_nA(n)$ is an abbreviation for $\forall_{\hat{n}}(\Total_{\NN} \hat{n} \to A(\hat{n}))$, and analogously for the other versions of the totality. This abbreviation is also used in Minlog.
Usually we just write $\Total x$ instead of $\Total_{\tau}x$ when the type will be clear from the context or is not important.
Since we implicitly assume that all boolean terms are total, we can use case distinctions on them. In this sense, the law of excluded middle holds for boolean terms.

From \cite[Satz 1.5.10]{wiesnet2017konstruktive} it follows that for total objects in $\BB$, $\NN$ and $\PP$ the decidable equality implies the Leibniz equality, and by $\operatorname{Eq}^-\left(\{x,y\mid x = y\}\right)$ it follows directly that the Leibniz equality implies decidable equality on total objects of any type. Therefore, we use the simpler decidable equality.

\paragraph{Logical connectives.}
In the next section, we will briefly discuss the computational content of formulas. Therefore, in this section, we will already outline what we expect as the computational content of the logical connectives and quantifiers.

For any two formulas $A$ and $B$, their conjunction is given as an inductively defined predicate with the single clause
\begin{align*}
\wedge^+&: A\to B \to A\wedge B
\end{align*}
Its elimination axiom is then given by
\begin{align*}
\wedge^-&: A\wedge B \to (A\to B\to C)\to C
\end{align*}
for any formula $C$. Note, that in this case the predicate $A\wedge B$ does not take any terms as arguments and is therefore also a formula.
For the computational content, we expect the computational content of $A$ and the computational content of $B$, if $A$ and $B$ have computational content.

The disjunction is given by two introduction rules and an elimination axiom, as follows:
\begin{align*}
\vee^+_0&: A\to A\vee B, \qquad \vee^+_1: B\to A\vee B,\\
\vee^-&: A\vee B \to (A\to C) \to (B\to C) \to C
\end{align*}
As computational content we expect a marker indicating which side holds and, whenever that side has computational content, the corresponding data.

The existential quantifier is also inductively defined by one clause with its introduction and elimination axiom as follows:
\begin{align*}
\exists^+&:\forall_x(A(x)\to \exists_xA(x))\\
\exists^-&:\exists_xA(x)\to \forall_x(A(x)\to C)\to C
\end{align*}
Note that $x$ must not be a free variable in $C$.
For the computational content, note that $\forall_x(A(x)\to C)$ is short for $\forall_{\hat x}(\Total \hat x\to A(\hat x)\to C)$. Hence, $\exists_xA(x)$ can be seen as $\exists_{\hat x}(\Total \hat x \wedge A(\hat x))$.
Therefore, the computational content of  $\exists_xA(x)$ is the computational content of $\Total x$ and $A(x)$, where the computational content of $\Total x$ can be identified with $x$ itself. Hence, the computational content of $\exists_xA(x)$ can be seen as a term $t$ together with the computational content of $A(t)$, if $A(t)$ has computational content; if $A(t)$ has no computational content, the computational content of $\exists_xA(x)$ is just the term $t$.

\subsubsection{Program Extraction from Proofs} 
\label{Sec:ProgExtr}
In this section we give an overview of the process of program extraction from proofs in \TCF. For formal definitions and proofs we refer to \cite{schwichtenberg2023computational,schwichtenberg2012proofs,wiesnet2017konstruktive}. In this short section we do not give formal definitions as they are quite
complex and we will use the proof assistant Minlog in any case to carry out the program
extraction.

\paragraph{Computationally relevant formulas.}
The computational content arises from (co-)inductively defined predicates. When defining such a predicate or a predicate variable, one must specify whether it is computationally relevant (cr) or non-computational (nc). For instance, Leibniz equality is defined as non-computational. Thus, Boolean terms as formulas carry no computational content.

 The standard version of the totality $\Total$ is cr, while the alternative version, denoted by \(\Total^{nc}\), is nc. The same distinction applies to the logical connectives \(\wedge\), \(\vee\), and \(\exists\), whose constructive content was discussed in the previous section. Correspondingly, there are also nc versions \(\wedge^{nc}\), \(\vee^{nc}\), and \(\exists^{nc}\), which do not carry any computational content.

A formula $C$ is computationally relevant if its final conclusion $\operatorname{FC}(C)$ is of the form \(A\vec{t}\), where \(A\) is a computationally relevant predicate. The final conclusion is defined by the rules $\operatorname{FC}(A\to B) := \operatorname{FC}(B)$, $\operatorname{FC}(\forall_{\hat x}A):=\operatorname{FC}(A)$ and $\operatorname{FC}(I\vec{x}) = I\vec{x}$, where $I$ is an (co-)inductively defined predicate.

The key difference between a non-computational and a computational inductively defined predicate lies in the competitor predicate: for the non-computational case, the competitor must also be non-computational.

It is important to note that universal and existential quantifiers, by themselves, do not inherently carry computational content. However, when used in conjunction with the totality predicate and the previously introduced abbreviations, they do.
Therefore, the formula \(\exl_x A(x)\) carries at least the computational information of \(x\), since it is an abbreviation for 
$
\exists_{\hat{x}} \bigl( \Total \hat{x} \wedge A(\hat{x})\bigr).
$
The computational content of this expression is a pair consisting of the content of \(\Total \hat{x}\) and that of \(A(\hat{x})\). In our case, the computational content of \(\Total x\) can be identified with \(x\) itself. Hence, the computational content of \(\exl_x A(x)\) provides at least a term \(t\) such that \(A(t)\) holds. Similarly, the computational content of \(\forall_x A(x)\) is a function that takes an arbitrary term \(t\) of the same type as \(x\) and returns the computational content of \(A(t)\).
If we explicitly wish to suppress computational content, we usually use the nc existence quantifier \(\exists^{nc}\) and write $\forall^{nc}_xA(x)$ for $\forall_{\hat x}(\Total^{nc}\hat x\to A(\hat x))$.
\paragraph{Procedure of formal program extraction.}
In a first step, given a computationally relevant formula \(A\), one defines its \emph{type} \(\tau(A)\) and a corresponding \emph{realiser predicate} \(\realizer{A}\). Formally, this is done by structural recursion on the formula \(A\).
The realiser predicate is a predicate that takes a term of type \(\tau(A)\) and asserts that this term is a realiser of the formula, i.e., it satisfies the computational requirements specified by \(A\).

In a second step, the \emph{extracted term} \(\et{M}\) of the formalised proof \(M\) of \(A\) is computed. This extracted term is a term in \TCF, has the type \(\tau(A)\), and is defined by structural recursion over the proof \(M\). It represents the algorithm extracted from the formal proof.
In our case, we will discuss this term after presenting an interesting proof formalised in Minlog, typically expressed as a Haskell term.

In the final step of program extraction, a proof is constructed showing that the extracted term is indeed a realiser of the realiser predicate, i.e., \(\realizer{A} \et{M}\). This is known as the \emph{soundness theorem}, and it holds for all computationally relevant formulas \(A\) and proofs \(M\) that do not themselves contain the realiser predicate. Since the realiser predicate is designed specifically to evaluate computational content, this is no real restriction to any usual proof in normal mathematics.
For our purposes, it suffices to know that such a proof exists. Thus, the extracted term is correct and behaves as expected. 

\section{Basic Definitions}
In this section, we cover fundamental definitions that are not directly related to number theory but are very general. In particular, we address the different definitions for natural numbers and for positive binary numbers.
\subsection{Bounded Existence Quantifier}
An important Boolean-valued program constant for this article is the bounded existence quantifier. On the natural numbers, it is defined as follows:

\begin{definition}[\texorpdfstring{\href{https://github.com/FranziskusWiesnet/MinlogArith/blob/1c63c302aa52e8972b594a3ef8753fa11a0435eb/minlog/lib/nat.scm\#L3743}{\texttt{ExBNat}}}{ExBNat}]
For a sequence of booleans $ws:\NN\to \BB$, we define the bounded existence quantifier on natural numbers by the following rules:\footnote{\label{Footnote}Note that, instead of using a case distinction in the definition, one could also use the Boolean disjunction (i.e.~$\exbn{\Suc n} ws :=(ws\ n)  \orb\exbn{n} ws$). While this makes no difference on paper, Minlog uses a case distinction to ensure that the condition term is evaluated first, and only the corresponding branch is computed based on its value. This improves efficiency compared to a Boolean disjunction, as it avoids evaluating both branches unnecessarily.}
\begin{align*}
\exbn{0} ws &:= \false \\
 \exbn{\Suc n} ws &:= \begin{cases}\true & \text{ if}\quad ws\ n\\ \exbn{n} ws & \text{ otherwise.}  \end{cases}
\end{align*}
As an abbreviation we use the notation $\exbn{n}_i (ws\ i) := \exbn{n} ws$. Formally, the bounded existence quantifier on natural numbers is a defined constant $\exists^< :\NN \to (\NN \to \BB) \to \BB$.
\end{definition}
\begin{lemma} \label{Lem:ExBNatProp}
The universal closures of the following statements hold:
\begin{equation*}
\begin{array}{ll}
\href{https://github.com/FranziskusWiesnet/MinlogArith/blob/1c63c302aa52e8972b594a3ef8753fa11a0435eb/minlog/lib/nat.scm#L3770}{\mathtt{ExBNatIntro}}:
& m<n \to ws\ m \to \exbn{n}_i (ws\ i)\\
\href{https://github.com/FranziskusWiesnet/MinlogArith/blob/1c63c302aa52e8972b594a3ef8753fa11a0435eb/minlog/lib/nat.scm#L3886}{\mathtt{ExBNatElim}}:
& \exbn{n}_i (ws\ i) \to \forall_{m<n}^{nc}(ws\ m \to X) \to X\\
\href{https://github.com/FranziskusWiesnet/MinlogArith/blob/1c63c302aa52e8972b594a3ef8753fa11a0435eb/minlog/lib/nat.scm#L3801}{\mathtt{ExBNatToExNc}}:
& \exbn{n}_i (ws\ i) \to \exnc_{m<n}(ws\ m)
\end{array}
\end{equation*}
Here, $X$ can be an arbitrary formula, as long as it does not lead to a collision of free variables. 
\end{lemma}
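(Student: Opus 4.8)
\noindent\emph{Proof idea.}
The plan is to prove each of the three statements by induction on $n$, mirroring the two defining clauses of $\exists^{<}$. In every case the base case $n=0$ is immediate: the hypothesis one is handed is contradictory. Indeed, $m<0$ is the false Boolean, and $\exbn{0}_i(ws\ i)$ unfolds to $\false\equiv\true$, i.e.\ the falsum; so one concludes by the ex-falso principle, which (as recalled above) is available for every formula occurring here. The step case $n=\Suc n'$ always starts with a case distinction on the total Boolean $ws\ n'$, after which the second clause of $\exbn{\Suc n'}$ unfolds: it is $\true$ if $ws\ n'$ holds and equals $\exbn{n'}ws$ otherwise.

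For \texttt{ExBNatIntro}, in the step case assume $m<\Suc n'$ and $ws\ m$. If $ws\ n'$ holds, $\exbn{\Suc n'}_i(ws\ i)$ reduces to $\true$ and we are done. If $ws\ n'$ does not hold, then $m=n'$ is impossible (it would give $ws\ m\equiv ws\ n'\equiv\false$, contradicting $ws\ m$), so $m<\Suc n'$ forces $m<n'$, and the induction hypothesis applied to $m<n'$ and $ws\ m$ yields $\exbn{n'}_i(ws\ i)$, which in this case equals $\exbn{\Suc n'}_i(ws\ i)$.

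For \texttt{ExBNatElim}, in the step case assume $\exbn{\Suc n'}_i(ws\ i)$ and $\forall_{m<\Suc n'}^{nc}(ws\ m\to X)$. If $ws\ n'$ holds, instantiate the second premise at $m:=n'$, using $n'<\Suc n'$ and $ws\ n'$, to obtain $X$. If $ws\ n'$ does not hold, the first premise reduces to $\exbn{n'}_i(ws\ i)$, and since $m<n'$ implies $m<\Suc n'$ the second premise restricts to $\forall_{m<n'}^{nc}(ws\ m\to X)$; the induction hypothesis then gives $X$. Finally, \texttt{ExBNatToExNc} I would obtain as the instance of \texttt{ExBNatElim} with $X:=\exnc_{m<n}(ws\ m)$, whose second premise $\forall_{m<n}^{nc}(ws\ m\to\exnc_{m'<n}(ws\ m'))$ is just the introduction rule of the non-computational bounded existential; alternatively, it can be proved directly by the same induction as \texttt{ExBNatElim}.

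I do not anticipate a real obstacle. Since $\exbn{n}_i(ws\ i)$ is a Boolean formula and both $\forall^{nc}$ and $\exnc$ are non-computational, all three statements are themselves non-computational, so these are purely logical lemmas with no extracted-term considerations. The only points needing care are the definitional unfolding of the case-distinction clause of $\exists^{<}$ in the step cases, the invocation of ex falso in the base cases, and the elementary decomposition of $m<\Suc n'$ into $m<n'$ and $m=n'$. All of this is routine, which is precisely what makes the lemma suitable as basic infrastructure for the number-theoretic results to come.
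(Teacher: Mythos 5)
Your proof is correct and follows essentially the same route as the paper: induction on $n$ for the first two statements (the paper delegates the details to the Minlog library, and your unfolding of the two defining clauses with a case distinction on $ws\,n'$, plus ex falso in the base case, is exactly that argument), and \texttt{ExBNatToExNc} obtained as the instance of \texttt{ExBNatElim} with $X:=\exnc_{m<n}(ws\ m)$. One small caveat on your closing remark: since $X$ may be a computationally relevant formula (the paper instantiates \texttt{ExBNatElim} with $X:=\exl_l(ws\,l)$ in the proof of Lemma~\ref{Lem:ExBPosToEx}), \texttt{ExBNatElim} is not always computationally inert, though this does not affect the correctness of your proof.
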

\begin{proof}
The first two statements follow by induction on $n$ and the last statement is a special case of the second statement. For details, we refer to the corresponding statement in the Minlog library.
\end{proof}
Due to the nature of positive binary numbers, the bounded existential quantifier is defined somewhat differently in this context. In particular, it is easier here to test up to and including the upper bound. Therefore, $\leq$ becomes part of the notation. This allows for an easy distinction between the notation for natural numbers and positive binary numbers.
\begin{definition}[\texorpdfstring{\href{https://github.com/FranziskusWiesnet/MinlogArith/blob/1c63c302aa52e8972b594a3ef8753fa11a0435eb/minlog/lib/pos.scm\#L6943}{\texttt{ExBPos}}}{ExBPos}]
For a given function $wf:\PP\to \BB$, we define the bounded existence quantifier $\exists^{\leq}$ on positive binary numbers by the following rules:\footref{Footnote}
\begin{align*}
\exbp{1}wf &:= wf\ 1\\
\exbp{\SZero p}wf &:= \begin{cases}
\true &\text{ if }\exbp{p}wf\\
\exbp{p}\lambda_i(wf(p+i)) &\text{ otherwise,}
\end{cases}\\
\exbp{\SOne p}wf &:= \begin{cases}
\true &\text{ if }wf(\SOne p)\\
\exbp{\SZero p}wf &\text{ otherwise.}
\end{cases}
\end{align*}
We use the notation $\exbp{n}_i (ws\ i) := \exbp{n} ws$. Formally, $\exists^{\leq}: \PP \to (\PP \to \BB) \to \BB$ is a defined constant.
\end{definition}
\begin{lemma} \label{Lem:ExBPosProp}
The universal closures of the following statements hold:
\begin{equation*}
\begin{array}{ll}
\href{https://github.com/FranziskusWiesnet/MinlogArith/blob/1c63c302aa52e8972b594a3ef8753fa11a0435eb/minlog/lib/pos.scm#L6997}{\mathtt{ExBPosIntro}}: & p\leq q \to wf\ p \to \exbp{q}_i (wf\ i)\\
\href{https://github.com/FranziskusWiesnet/MinlogArith/blob/1c63c302aa52e8972b594a3ef8753fa11a0435eb/minlog/lib/pos.scm#L7085}{\mathtt{ExBPosElim}}: & \exbp{p}_i (wf\ i) \to \forall_{q\leq p}^{nc}(wf\ q \to X) \to X\\
\href{https://github.com/FranziskusWiesnet/MinlogArith/blob/1c63c302aa52e8972b594a3ef8753fa11a0435eb/minlog/lib/pos.scm#L7172}{\mathtt{ExBPosToExNc}}: & \exbp{p}_i (wf\ i) \to \exnc_{q\leq p}(wf\ q)
\end{array}
\end{equation*}
Here, $X$ can be any formula, provided it does not cause a collision of free variables.
\end{lemma}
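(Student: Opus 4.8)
The plan is to prove the first two statements by structural induction on the positive binary number appearing as the bound, using the elimination axiom $\Total_{\PP}^-$ with its base case $1$ and its two step cases $\SZero q$ and $\SOne q$; the third statement, \texttt{ExBPosToExNc}, is then an immediate corollary of \texttt{ExBPosElim}, obtained by taking $X := \exnc_{q\leq p}(wf\ q)$ and discharging the remaining premise $\forall_{q\leq p}^{nc}(wf\ q\to\exnc_{q'\leq p}(wf\ q'))$ by the introduction rule of $\exnc$.

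For \texttt{ExBPosIntro} I assume $p\leq q$ and $wf\ p$ and induct on $q$. If $q=1$, then $p=1$ and $\exbp{1}wf$ unfolds to $wf\ 1=wf\ p$. If $q=\SZero q'$, I distinguish $p\leq q'$ from $q'<p\leq 2q'$: in the first case the induction hypothesis gives $\exbp{q'}wf$, so the first branch of the computation rule for $\exbp{\SZero q'}wf$ yields $\true$; in the second case I write $p=q'+i$ with $1\leq i\leq q'$, apply the induction hypothesis to the shifted sequence $\lambda_j(wf(q'+j))$ with witness $i$, and obtain $\exbp{q'}\lambda_j(wf(q'+j))$, which is exactly the ``otherwise'' branch. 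If $q=\SOne q'$, I first test $wf(\SOne q')$; if it holds, $\exbp{\SOne q'}wf=\true$, and otherwise $p\leq\SZero q'$ and $\exbp{\SOne q'}wf=\exbp{\SZero q'}wf$, which I settle by re-running the two-way case distinction of the $\SZero$-case.

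For \texttt{ExBPosElim} I assume $\exbp{p}_i(wf\ i)$ and $H:\forall_{q\leq p}^{nc}(wf\ q\to X)$ and again induct on $p$. The base case applies $H$ to $1\leq 1$ and $wf\ 1$. For $p=\SZero p'$ I case-split on the Boolean $\exbp{p'}wf$: if it is $\true$, the induction hypothesis for $wf$ together with $H$ restricted to $q\leq p'$ (valid since $p'\leq\SZero p'$) gives $X$; if it is $\false$, then $\exbp{p'}\lambda_j(wf(p'+j))$ holds, and the induction hypothesis for this shifted sequence together with $H$ applied at $p'+q\leq\SZero p'$ gives $X$. For $p=\SOne p'$ I case-split on $wf(\SOne p')$: if it holds, $H$ applies directly; otherwise $\exbp{\SZero p'}wf$ holds, and — after weakening $H$ along $\SZero p'\leq\SOne p'$ — I repeat the inner split of the $\SZero$-case to extract $X$.

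The main obstacle is exactly this fallback: the computation rule $\exbp{\SOne p}wf=\exbp{\SZero p}wf$ (on the ``otherwise'' branch) forces the $\SOne$-case to reason about $\exbp{\SZero p}$, but structural induction on $\PP$ supplies the induction hypothesis only for $p$, not for $\SZero p$, so the $\SOne$-case cannot simply invoke the $\SZero$-case and must unfold $\exbp{\SZero p}$ and redo its internal Boolean case distinction. A subsidiary nuisance is the arithmetic bookkeeping around the shifted sequence $\lambda_j(wf(p+j))$: since $\PP$ has no zero, one must check that the indices $j=1,\dots,p$ cover precisely the upper half $p+1,\dots,2p$ of the search range, and keep at hand the order facts $p\leq\SZero p$, $\SZero p\leq\SOne p$, and $q\leq p\Rightarrow p+q\leq\SZero p$ used to re-target $H$ and the induction hypotheses. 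These routine points, together with the induction, are what make the formal proof noticeably longer than the informal ``by induction on the bound''.
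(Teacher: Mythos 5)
Your proposal is correct and follows the same route as the paper: \texttt{ExBPosIntro} by induction on the bound $q$, \texttt{ExBPosElim} by induction on $p$, and \texttt{ExBPosToExNc} as a special case of \texttt{ExBPosElim}. The extra detail you supply (the shifted-sequence bookkeeping and the need to re-unfold the $\SZero$-case inside the $\SOne$-case because the structural induction hypothesis is only available at $p'$) accurately reflects what the terse paper proof delegates to the Minlog files.
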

\begin{proof}
The first statement follows by induction on $q$, the second statement follows by induction on $p$, and the last statement is a special case of the second one. For details, we refer to the corresponding Minlog files.

\end{proof}
\subsection{Least Number and Monotone Maximum}
Throughout this article, we frequently need to compute a number satisfying a certain property, where it already exists in a weak sense and is bounded. In this case, a bounded search can be applied. It is defined as follows:
\begin{definition}[\texorpdfstring{\href{https://github.com/FranziskusWiesnet/MinlogArith/blob/1c63c302aa52e8972b594a3ef8753fa11a0435eb/minlog/lib/nat.scm\#L1949}{\texttt{NatLeast}}}{NatLeast}, \texorpdfstring{\href{https://github.com/FranziskusWiesnet/MinlogArith/blob/1c63c302aa52e8972b594a3ef8753fa11a0435eb/minlog/lib/nat.scm\#L1988}{\texttt{NatLeastUp}}}{NatLeastUp}]
For a sequence of booleans $ws:\NN\to \BB$, the bounded $\mu$-operator is given by the following rules:
\begin{align*}
\mu_{<0} ws &:= 0\\
\mu_{<\Suc n} ws &:= \begin{cases} 
0 &\text { if}\quad ws\ 0\\
\Suc \left(\mu_{<n}\left(\lambda_m\left(ws \left(\Suc m\right)\right)\right)\right)&\text{ otherwise.}
\end{cases}
\end{align*}
We use the notations $\mu_{i<n} (ws\ i) := \mu_{<n} ws$ and define the general $\mu$-operator by
\begin{align*}
\mu_{m\leq i<n} (ws\ i):=\begin{cases}\left(\mu_{i<n-m}(ws(i+m))\right) + m & \text{ if } m\leq n\\ 0 &\text{ otherwise.}\end{cases}
\end{align*}
\end{definition}
As can be seen directly from the definition, $\mu_{<n}(ws)$ indeed returns the smallest natural number satisfying the property $ws$, if such a number exists and is less than $n$; otherwise, it returns $n$.

Since each individual case is checked exhaustively until a corresponding 
$i$ is found or the bound is reached, the computational use of the bounded existential quantifier or the $\mu$-operator is associated with a high runtime. We will see that the bounded existential quantifier appears in theorem statements but does not contribute to the computational content. Therefore, the $\mu$-operator constitutes the main bottleneck in terms of computational complexity. However, it should be noted that the smaller the corresponding number 
$i$ for which $ps(i)$ holds, the more efficient the $\mu$-operator $\mu_{<n}ps$ becomes.

The following lemma shows characteristic properties of the $\mu$-operator.
Since it is obvious that
\begin{align*}
\forall_{n,ws}(\mu_{i<n}(ws\ i)=\mu_{0\leq i <n}(ws\ i) )\qquad 
(\href{https://github.com/FranziskusWiesnet/MinlogArith/blob/1c63c302aa52e8972b594a3ef8753fa11a0435eb/minlog/lib/nat.scm\#L5710}{\mathtt{NatLeastUpZero}}),
\end{align*}
the lemma is formulated only for the general $\mu$-operator if there are two versions, and can simply be specialised to the normal $\mu$-operator. The proofs are straightforward and are mainly done by induction. For details, we refer to the corresponding Minlog implementation and do not give them here.
\begin{lemma}
\label{Lem:LeastNatProp}
The universal closures of the following statements hold:
\begin{equation*}
\begin{array}{ll}
\href{https://github.com/FranziskusWiesnet/MinlogArith/blob/1c63c302aa52e8972b594a3ef8753fa11a0435eb/minlog/lib/nat.scm#L5676}{\mathtt{NatLeastUpLtElim}}: & n_0\leq \mu_{n_0\leq i < n_1}(ws\ i) \to\\ & \mu_{n_0\leq i < n_1}(ws\ i)< n_1 \to ws(\mu_{n_0\leq i < n_1}(ws\ i))\vspace{1mm}\\
\href{https://github.com/FranziskusWiesnet/MinlogArith/blob/1c63c302aa52e8972b594a3ef8753fa11a0435eb/minlog/lib/nat.scm#L5646}{\mathtt{NatLeastUpLeIntro}}: & n_0\leq m \to ws\ m \to \mu_{n_0\leq i < n_1}(ws\ i)\leq m\\
\href{https://github.com/FranziskusWiesnet/MinlogArith/blob/1c63c302aa52e8972b594a3ef8753fa11a0435eb/minlog/lib/nat.scm#L5635}{\mathtt{NatLeastUpLBound}}: & n_0 \leq n_1 \to n_0 \leq \mu_{n_0\leq i < n_1}(ws\ i)\\
\href{https://github.com/FranziskusWiesnet/MinlogArith/blob/1c63c302aa52e8972b594a3ef8753fa11a0435eb/minlog/lib/nat.scm#L5615}{\mathtt{NatLeastUpBound}}: & \mu_{n_0\leq i < n_1}(ws\ i)\leq n_1\\
\href{https://github.com/FranziskusWiesnet/MinlogArith/blob/1c63c302aa52e8972b594a3ef8753fa11a0435eb/minlog/lib/nat.scm#L4175}{\mathtt{PropNatLeast}}: & m\leq n \to ws\ m \to ws(\mu_{i<n}(ws\ i) )
\end{array}
\end{equation*}
\end{lemma}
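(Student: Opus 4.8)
The plan is to reduce all five statements to properties of the \emph{simple} bounded $\mu$-operator $\mu_{<n}$ and then transfer them to the general operator $\mu_{n_0\leq i<n_1}$ by unfolding its defining case split. First I would establish, by induction on $n$, three auxiliary facts about $\mu_{<n}$: the bound $\mu_{i<n}(ws\ i)\leq n$; the introduction rule $ws\ m\to\mu_{i<n}(ws\ i)\leq m$ (here no hypothesis $m<n$ is needed, since for $m\geq n$ the bound already gives the claim, and for $m<n$ the successor step uses the same case distinction on $ws\ 0$ as the definition of $\mu$, applying the induction hypothesis to the shifted sequence $\lambda_k(ws(\Suc k))$); and the elimination rule $\mu_{i<n}(ws\ i)<n\to ws(\mu_{i<n}(ws\ i))$, again by induction on $n$ with a case distinction on $ws\ 0$, using the induction hypothesis on the shifted sequence in the successor case. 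The statement \texttt{PropNatLeast} then follows directly: the introduction rule gives $\mu_{i<n}(ws\ i)\leq m\leq n$, and we split on whether $\mu_{i<n}(ws\ i)<n$ -- in which case the elimination rule applies -- or $\mu_{i<n}(ws\ i)=n$, in which case $m\leq n=\mu_{i<n}(ws\ i)\leq m$ forces $m=\mu_{i<n}(ws\ i)$, so $ws\ m$ is already the claim. (A direct induction on $n$ also works.)

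For the four statements involving $\mu_{n_0\leq i<n_1}$ I would case-split on $n_0\leq n_1$. If $n_0\leq n_1$, the definition unfolds to $\mu_{n_0\leq i<n_1}(ws\ i)=\bigl(\mu_{i<n_1-n_0}(ws(i+n_0))\bigr)+n_0$, and each statement reduces, after the arithmetic bookkeeping with the shift by $n_0$ and truncated subtraction, to the corresponding fact above applied to the shifted sequence $\lambda_i(ws(i+n_0))$: \texttt{NatLeastUpBound} uses the bound ($\mu_{i<n_1-n_0}(\cdots)\leq n_1-n_0$, hence $\leq n_1$ after adding $n_0$); \texttt{NatLeastUpLBound} is immediate since the value has the form $(\cdots)+n_0\geq n_0$; \texttt{NatLeastUpLeIntro} uses the introduction rule with witness $m-n_0$ (legitimate because $n_0\leq m$ and $(m-n_0)+n_0=m$), yielding $\mu_{i<n_1-n_0}(ws(\cdot+n_0))\leq m-n_0$ and hence $\leq m$; and \texttt{NatLeastUpLtElim} uses the elimination rule, noting that $\mu_{n_0\leq i<n_1}(ws\ i)<n_1$ is equivalent to $\mu_{i<n_1-n_0}(ws(\cdot+n_0))<n_1-n_0$. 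If instead $n_0>n_1$, then $\mu_{n_0\leq i<n_1}(ws\ i)=0$, and the claims become trivial or vacuous: \texttt{NatLeastUpBound} is $0\leq n_1$, \texttt{NatLeastUpLeIntro} is $0\leq m$, the hypothesis $n_0\leq n_1$ of \texttt{NatLeastUpLBound} is already contradictory, and in \texttt{NatLeastUpLtElim} the hypothesis $n_0\leq\mu_{n_0\leq i<n_1}(ws\ i)=0$ together with $n_1<n_0$ gives $\falsum$, so ex falso closes the case.

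None of this is deep, and the paper rightly calls the proofs straightforward. The only mildly delicate points -- and hence the parts most worth double-checking -- are the bookkeeping around truncated natural-number subtraction when relating $\mu_{i<n_1-n_0}(ws(\cdot+n_0))+n_0$ to $\mu_{n_0\leq i<n_1}(ws\ i)$ and when converting the witness $m$ to $m-n_0$, together with the careful handling of the degenerate case $n_0>n_1$ (especially in \texttt{NatLeastUpLtElim}, where one must notice that the first hypothesis is self-contradictory). These are precisely the routine-but-fiddly details that a proof assistant dispatches cleanly, which is why the full derivations are left to the Minlog implementation.
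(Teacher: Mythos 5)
Your proposal is correct: the three auxiliary inductions on $n$ for the simple operator $\mu_{<n}$ go through exactly as you describe, and the transfer to $\mu_{n_0\leq i<n_1}$ by unfolding its case split (including the truncated-subtraction bookkeeping and the vacuous/degenerate cases for $n_0>n_1$) is sound. The paper itself omits the proof entirely, remarking only that the statements are ``straightforward and mainly done by induction'' with details deferred to the Minlog library, and your argument is precisely the straightforward induction that remark refers to, so there is nothing further to compare.
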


The $\mu$-operator allows us to turn the bounded existential quantifier into a computational existential quantifier. Initially, the bounded existential quantifier yields only a non-computational existence, as shown in Lemma~\ref{Lem:ExBNatProp} (\texttt{ExBNatToExNc}). In the following lemma, we show how to obtain computational existence using the $\mu$-operator. We present this proof in particular to demonstrate the different treatment of computational and non-computational terms.

\begin{lemma}
\label{Lem:ExBPosToEx}
\begin{align*}
\forall_{ws}&(\exbn{n}_l(ws\ l) \to \exists_l (ws\ l))
\end{align*}
\end{lemma}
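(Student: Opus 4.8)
The plan is to extract a witness from the bounded existential quantifier by means of the bounded $\mu$-operator. Concretely, I would first observe that $\exbn{n}_l(ws\ l)$ together with \texttt{ExBNatToExNc} (Lemma~\ref{Lem:ExBNatProp}) yields $\exnc_{l<n}(ws\ l)$; this is the purely logical, non-computational half of the argument. The computational work is then carried by the term $\mu_{l<n}(ws\ l)$, which is defined without any reference to the (non-computational) hypothesis and hence is a legitimate piece of extracted content. The goal is to instantiate the existential quantifier $\exists_l(ws\ l)$ with this term.

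The key steps, in order: (1) assume $\exbn{n}_l(ws\ l)$ and set $k := \mu_{l<n}(ws\ l)$; (2) use \texttt{ExBNatElim} to obtain some $m<n$ with $ws\ m$, keeping in mind that this elimination is non-computational so the particular $m$ it produces will not leak into the extracted term; (3) from $m<n$ and $ws\ m$ apply \texttt{PropNatLeast} (Lemma~\ref{Lem:LeastNatProp}) to conclude $ws(\mu_{l<n}(ws\ l))$, i.e.\ $ws\ k$; (4) apply $\exists^+$ with witness $k$ to derive $\exists_l(ws\ l)$. Since the whole derivation of $\exists_l(ws\ l)$ ultimately supplies the concrete term $k = \mu_{l<n}(ws\ l)$ as the witness, the extracted program is exactly $\lambda_{ws}\lambda_n.\ \mu_{l<n}(ws\ l)$ (modulo the implicit totality argument for $n$).

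The main subtlety — rather than a genuine obstacle — is bookkeeping the computational/non-computational boundary so that the extracted term is the clean $\mu$-operator and nothing more. One must invoke \texttt{ExBNatElim} (and the $\exnc$ it feeds on) strictly within the non-computational layer: the hypothesis $\exbn{n}_l(ws\ l)$ is a Boolean, hence carries no content, and \texttt{ExBNatElim} is stated with a non-computational bounded quantifier, so the witness $m$ is used only to \emph{prove} $ws\ k$, never to \emph{compute} $k$. This is precisely the point the authors flag in the paragraph preceding the lemma ("to demonstrate the different treatment of computational and non-computational terms"), and in Minlog it is handled by performing the elimination in a position where the extracted term does not depend on it. Everything else is routine: \texttt{PropNatLeast} does the real mathematical lifting, and it has already been recorded in Lemma~\ref{Lem:LeastNatProp}.
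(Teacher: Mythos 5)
Your proposal is correct and follows essentially the same route as the paper's proof: apply \texttt{ExBNatElim} with the goal $\exists_l(ws\,l)$ as the competitor formula, use the non-computationally obtained witness only to discharge the hypotheses of \texttt{PropNatLeast}, and exhibit $\mu_{l<n}(ws\,l)$ as the computational witness, so that the extracted term is exactly the $\mu$-operator. Your remarks on the computational/non-computational bookkeeping match the paper's own emphasis.
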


\begin{proof}
Assume $\exbn{n}_l(ws\,l)$ for a given $ws$ (with $\Total(ws)$ implicitly and computationally relevant), and apply Lemma~\ref{Lem:ExBNatProp} (\texttt{ExBNatElim}) with $X := \exl_l(ws\,l)$.
It remains to show
\[
\forall^{nc}_{l<n}\bigl(ws\,l \to X\bigr).
\]
Let $l_0$  be given non-computationally such that $l_0<n$ and $ws\,l_0$ hold. We need to show
$\exists_l(ws\,l)$. For this, we define
\[
l := \mu_{<n}(ws),
\]
which can be used computationally as $ws$ is computationally relevant.
By \texttt{PropNatLeast} from Lemma~\ref{Lem:LeastNatProp}, we obtain
\[
\forall_m\bigl(m \le n \to ws\,m \to ws\,l\bigr).
\]
Note that the final conclusion $ws\,l$, and hence the whole formula, is
non-computational, as $ws\,l$ is formally a Leibniz equality. Therefore, we may instantiate $m$ with $l_0$.
Since $l_0<n$ implies $l_0 \le n$ and we have $ws\,l_0$, we conclude $ws\,l$ and
thus $\exists_l(ws\,l)$. 
\end{proof}
As one can easily see directly from the proof, the computational content of the proof above is a function that takes a sequence $ws$ and returns $\mu_{<n}(ws)$. Thus, the $\mu$-operator directly yields a computational meaning even when the existence is initially non-computational. However, at least $ws$ must be given computationally.\smallskip

A similar definition of $\mu$ is also possible for positive binary numbers. However, this would not take into account the efficient representation of positive binary numbers. Moreover, the above definition can also be used for positive binary numbers by converting them to natural numbers, using the $\mu$-operator and then converting the result back to positive binary numbers.

However, the representation of positive binary numbers is particularly well suited for creating an interval nesting. For this, we need a Boolean valued function $wf:\PP\to \BB$ on positive binary numbers that is true for small numbers, i.e.~$wf(1)$, and false for large numbers, i.e.~$ \neg wf(2^n)$ for some given $n:\NN$, and once it is false, it remains false. We express this by $p<q \to wf(q)\to wf(p)$. Then, we can efficiently determine the largest positive binary number $p$ with $wf(p)$ using an interval nesting approach:

\begin{definition}[\texorpdfstring{\href{https://github.com/FranziskusWiesnet/MinlogArith/blob/1c63c302aa52e8972b594a3ef8753fa11a0435eb/minlog/lib/pos.scm\#L7572}{\texttt{PosMonMax}}}{PosMonMax}]
For a Boolean valued function $wf: \PP \to \BB$, we define the function $\nu_{aux}(wf,\cdot,\cdot):\NN \to \PP \to \PP$ by the rules
\begin{align*}
\nu_{aux}(wf,0,q) &:= q,\\
\nu_{aux}(wf,\Suc n,1) &:=
\begin{cases}
\nu_{aux}(wf,n, 2^n)& \text{ if } wf(2^n)\\
\nu_{aux}(wf,n, 1)& \text{ otherwise,}
\end{cases}\\
\nu_{aux}(wf,\Suc n,\SPos q) &:=
\begin{cases}
\nu_{aux}(wf,n, \SPos q + 2^n)& \text{ if } wf(\SPos q + 2^n)\\
\nu_{aux}(wf,n, \SPos q)& \text{ otherwise.}
\end{cases}
\end{align*}
The (monotone) maximum $\nu:(\PP\to\BB)\to \NN\to \PP$ is then defined by
\begin{align*}
\nu(wf,n) := \nu_{aux}(wf,n,1).
\end{align*}
\end{definition}
Note that the case distinction in $\nu_{aux}$ based on the third argument is necessary because we do not have $0$ available: the third argument serves as an accumulator, which should initially be set to 0 and then incremented or left unchanged depending on $wf$. However, since we do not have 0 available, we initialise the accumulator with 1 and must explicitly handle the case where it is 1 separately.

Using the definition one can then prove the following statements about $\nu$. For a detailed proof, we refer to the corresponding Minlog implementation.
\begin{lemma}[\texorpdfstring{\href{https://github.com/FranziskusWiesnet/MinlogArith/blob/1c63c302aa52e8972b594a3ef8753fa11a0435eb/minlog/lib/pos.scm\#L7586}{\texttt{PosMonMaxProp}}}{PosMonMaxProp},  \texorpdfstring{\href{https://github.com/FranziskusWiesnet/MinlogArith/blob/1c63c302aa52e8972b594a3ef8753fa11a0435eb/minlog/lib/pos.scm\#L7608}{\texttt{PosMonMaxNegProp}}}{PosMonMaxNegProp}, \texorpdfstring{\href{https://github.com/FranziskusWiesnet/MinlogArith/blob/1c63c302aa52e8972b594a3ef8753fa11a0435eb/minlog/lib/pos.scm\#L7622}{\texttt{PosMonMaxChar}}}{PosMonMaxChar}]
\label{Lem:NuProp}
For given $wf:\PP\to \BB$ and $n:\NN$ with $wf(1)$ and $\neg wf(2^n)$ we assume that $wf$ is monotone in the sense that $wf(q)\to wf(p)$ for all $p<q$.

Then, $wf(\nu(wf,n))$ and $\neg wf(q)$ for all $q>\nu(wf,n)$.
Furthermore, these two properties are characteristic for $\nu$, i.e.:
\begin{align*}
\forall_p\left(wf(p) \to \forall_q\left(p<q\to \neg wf(q)\right) \to p=\nu(wf,n)\right).
\end{align*}
\end{lemma}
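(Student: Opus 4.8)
The statement packages three assertions: that $\nu(wf,n)$ lies where $wf$ is true ($wf(\nu(wf,n))$, i.e.\ \texttt{PosMonMaxProp}), that it bounds that region from above ($\neg wf(q)$ for all $q>\nu(wf,n)$, i.e.\ \texttt{PosMonMaxNegProp}), and that these two properties determine $\nu(wf,n)$ uniquely (\texttt{PosMonMaxChar}). The plan is to derive \texttt{PosMonMaxChar} from the other two, and to obtain those two simultaneously from a single strengthened induction on $n$ phrased in terms of the auxiliary function $\nu_{aux}$.

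For \texttt{PosMonMaxChar}: suppose $p$ satisfies $wf(p)$ and $\forall_q(p<q\to\neg wf(q))$. Since the order on $\PP$ is total (equivalently, $=$ on $\PP$ is decidable), it suffices to exclude $p<\nu(wf,n)$ and $\nu(wf,n)<p$. In the first case, applying the hypothesis on $p$ with $q:=\nu(wf,n)$ yields $\neg wf(\nu(wf,n))$, contradicting \texttt{PosMonMaxProp}; in the second case, \texttt{PosMonMaxNegProp} with $q:=p$ yields $\neg wf(p)$, contradicting $wf(p)$. Hence $p=\nu(wf,n)$.

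For the two core assertions I would prove, by induction on $n$, a statement about $\nu_{aux}(wf,n,a)$ that keeps track of the accumulator: assuming $wf$ monotone and $wf(1)$, and writing $b$ for $2^n$ when $a=1$ and for $a+2^n$ otherwise, the claim is that if ($a=1$ or $wf(a)$) and $\neg wf(b)$, then $wf(\nu_{aux}(wf,n,a))$, moreover $\nu_{aux}(wf,n,a)<b$, and $\neg wf(q)$ for every $q>\nu_{aux}(wf,n,a)$ --- crucially with no upper bound on $q$. The base case $n=0$ reduces to $\nu_{aux}(wf,0,a)=a$ and uses monotonicity to turn the single fact $\neg wf(b)$ into $\neg wf(q)$ for all $q>a$. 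In the step one unfolds $\nu_{aux}(wf,\Suc n,\cdot)$: after a case split on whether the accumulator is $1$ or $\SPos q$ and on the Boolean test, the recursive call is either on an accumulator moved up by $2^n$ --- in which case the outer condition $\neg wf(b)$ is precisely the window-end condition the induction hypothesis needs --- or on the unchanged accumulator, in which case the failed test supplies that condition; either way the induction hypothesis gives the three conclusions for the recursive call and they transfer verbatim, the unbounded form of the ``$\neg wf$ above'' clause making the transfer immediate. Specialising to $a=1$, where $b=2^n$ and the hypotheses become exactly $wf(1)$ and $\neg wf(2^n)$, gives $wf(\nu(wf,n))$ and $\neg wf(q)$ for $q>\nu(wf,n)$.

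The main obstacle is pinning down the invariant correctly given that $0$ is unavailable: the search window is $[1,2^n)$ when $a=1$ but $[a,a+2^n)$ when $a\ge 2$, so the window-end $b$ must be defined by a case distinction on the accumulator, and one must notice that the innermost level can re-produce the accumulator $1$ --- at $n=0$ with $a=1$ the strengthened hypothesis degenerates (it would ask for $\neg wf(1)$, contradicting $wf(1)$), so that corner is handled not by the induction hypothesis but by direct evaluation $\nu_{aux}(wf,0,1)=1$ together with $wf(1)$ and monotonicity. Apart from this, the argument is a lengthy but routine nested case analysis in which monotonicity does the real work of propagating $\neg wf$ upward; this bookkeeping is exactly why the paper defers the details to the Minlog implementation.
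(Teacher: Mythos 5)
Your proof is correct, and it supplies exactly the details the paper omits: the paper defers the entire argument to the Minlog implementation, and its intended proof is precisely a strengthened induction on $n$ over $\nu_{aux}$ with an accumulator invariant, with the characterisation derived afterwards from the two core properties via trichotomy. You also correctly identified the one genuinely delicate point, namely that the accumulator value $1$ plays a double role (standing for the unavailable $0$ as well as for the number $1$), so that the call $\nu_{aux}(wf,0,1)$ reached through the successful test $wf(2^0)$ cannot be discharged by the induction hypothesis and must be handled directly from the ambient $\neg wf(2)$, $wf(1)$ and monotonicity --- which is exactly the bookkeeping that makes the formalised proof longer than one would expect.
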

\subsection{Square root}
In this section, we provide the definition of the integer square root function on the natural numbers and the positive binary numbers. We will need it later for divisibility and Fermat factorisation. An efficient definition is very important to keep the runtime as low as possible. Whereas the definition on natural numbers uses the $\mu$-operator and is therefore straightforward but not very efficient, the definition on positive binary numbers uses the $\nu$-operator and is thus particularly efficient.
\begin{definition}[\texorpdfstring{\href{https://github.com/FranziskusWiesnet/MinlogArith/blob/1c63c302aa52e8972b594a3ef8753fa11a0435eb/minlog/lib/nat.scm\#L8937}{\texttt{NatSqrt}}}{NatSqrt}]
\begin{align*}
\lceil \sqrt{n} \rceil := \mu_{<\Suc n}(\lambda_m.\ n\leq m\cdot m)
\end{align*}
\end{definition}

This definition actually corresponds to the rounded-up square root, which justifies the notation. This can also be seen by the properties of the following lemma:

\begin{lemma}
\label{Lem:NatSqrtProp}
The universal closures of the following statements hold:
\begin{equation*}
\begin{array}{ll}
\href{https://github.com/FranziskusWiesnet/MinlogArith/blob/1c63c302aa52e8972b594a3ef8753fa11a0435eb/minlog/lib/nat.scm#L8949}{\mathtt{NatLtSqrtToSquareLt}}: & m<\lceil \sqrt{n} \rceil \to m\cdot m < n\\
\href{https://github.com/FranziskusWiesnet/MinlogArith/blob/1c63c302aa52e8972b594a3ef8753fa11a0435eb/minlog/lib/nat.scm#L8963}{\mathtt{NatSqrtLeToLeSquare}}: & \lceil \sqrt{n} \rceil \leq m \to n\leq m\cdot m\\
\href{https://github.com/FranziskusWiesnet/MinlogArith/blob/1c63c302aa52e8972b594a3ef8753fa11a0435eb/minlog/lib/nat.scm#L9029}{\mathtt{NatSqrtBound}}: & \lceil \sqrt{n} \rceil \leq n
\end{array}
\end{equation*}
\end{lemma}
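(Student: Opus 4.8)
The plan is to reduce all three statements to the characteristic properties of the $\mu$-operator collected in Lemma~\ref{Lem:LeastNatProp}, combined with two elementary arithmetic facts: first, that $n\leq n\cdot n$ holds for every $n$ (immediate by case distinction on $n$, since $\Suc m\cdot\Suc m=\Suc m\cdot m+\Suc m\geq\Suc m$), and second, that multiplication is monotone, i.e.\ $a\leq b$ implies $a\cdot a\leq b\cdot b$. Throughout I abbreviate $ws:=\lambda_m.\ n\leq m\cdot m$, so that $\lceil\sqrt{n}\rceil=\mu_{<\Suc n}(ws)$, and I pass freely between $\mu_{<N}$ and $\mu_{0\leq i<N}$ via \texttt{NatLeastUpZero}.

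I would first handle \texttt{NatSqrtBound} and \texttt{NatLtSqrtToSquareLt}, both of which rest on \texttt{NatLeastUpLeIntro} (with lower bound $0$, upper bound $\Suc n$). Since $ws\ n$ holds by the first arithmetic fact, instantiating \texttt{NatLeastUpLeIntro} with witness $n$ and then applying \texttt{NatLeastUpZero} gives $\lceil\sqrt{n}\rceil=\mu_{<\Suc n}(ws)\leq n$, which is \texttt{NatSqrtBound}. For \texttt{NatLtSqrtToSquareLt} I would take the contrapositive of \texttt{NatLeastUpLeIntro} instantiated with witness $m$: if $ws\ m$ were true, then $\lceil\sqrt{n}\rceil=\mu_{<\Suc n}(ws)\leq m$, contradicting $m<\lceil\sqrt{n}\rceil$; hence $ws\ m$ is false, so $n\leq m\cdot m$ fails, and since $\leq$ on $\NN$ is decidable and total (all objects being total), this yields $m\cdot m<n$.

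For \texttt{NatSqrtLeToLeSquare} I would invoke \texttt{PropNatLeast} with bound $\Suc n$ and witness $n$ — admissible since $n\leq\Suc n$ and $ws\ n$ holds — to obtain $ws(\lceil\sqrt{n}\rceil)$, i.e.\ $n\leq\lceil\sqrt{n}\rceil\cdot\lceil\sqrt{n}\rceil$. Combining the hypothesis $\lceil\sqrt{n}\rceil\leq m$ with monotonicity of multiplication gives $\lceil\sqrt{n}\rceil\cdot\lceil\sqrt{n}\rceil\leq m\cdot m$, and transitivity of $\leq$ then yields $n\leq m\cdot m$.

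I do not expect a genuine obstacle here. The only points that need a little care are the off-by-one in the search bound (it is $\Suc n$ rather than $n$, precisely so that $n$ itself is an admissible witness of $ws$), the appeal to decidability and totality of $\leq$ in the second statement, and ensuring the two arithmetic sublemmas are available in the Minlog library; the substantive work has already been carried out in Lemma~\ref{Lem:LeastNatProp}, so the present lemma is essentially bookkeeping on top of it.
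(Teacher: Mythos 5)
Your proposal is correct and takes essentially the same route as the paper, which simply states that the three properties follow directly from the $\mu$-operator lemmas of Lemma~\ref{Lem:LeastNatProp} and defers details to the Minlog code. You have merely made explicit which clauses are used (\texttt{NatLeastUpLeIntro} and its contrapositive, \texttt{PropNatLeast}, \texttt{NatLeastUpZero}) together with the elementary facts $n\leq n\cdot n$ and monotonicity of squaring, all of which check out.
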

\begin{proof} These properties follow directly by the properties of the $\mu$-operator in Lemma \ref{Lem:LeastNatProp}. For the exact proofs, we refer to the corresponding Minlog implementation.\end{proof}

While the definition of the square root on the natural numbers itself is very simple, it suffers from extremely poor runtime due to the use of the $\mu$-operator. 
By using positive binary numbers and their interval nesting, as introduced in the previous section, we can significantly improve the efficiency as follows.

\begin{definition}[\texorpdfstring{\href{https://github.com/FranziskusWiesnet/MinlogArith/blob/1c63c302aa52e8972b594a3ef8753fa11a0435eb/minlog/lib/pos.scm\#L7648}{\texttt{PosSqrt}}}{PosSqrt}]
\begin{align*}
\lfloor\sqrt{p} \rfloor :=  \nu \left(\lambda_q(q\cdot q\leq p), \left\lfloor \frac{\texttt{PosLog}\ p}{2}\right\rfloor + 1 \right)
\end{align*}
\end{definition}

Note that here, we actually obtain the rounded-down square root, which is the case due to the properties of the $\nu$-operator. Therefore, the notation $\lfloor\sqrt{p} \rfloor$ is justified.
Tests for the square root algorithm are documented in \texttt{factor\_pos\_test.txt} and summarised in Appendix~\ref{Sec:AppSqrt}. Given an input with $n$ digits, the $\nu$-operator performs exactly $\lfloor n/2 \rfloor+1$ steps. In each step, two numbers with at most $\lfloor n/2 \rfloor +1$ digits are multiplied and compared with the input number. The runtime of multiplication is at most quadratic in the number of digits, and comparing two numbers is linear in the number of digits. Hence, the runtime of the square root procedure is (roughly) cubic in the number of digits of the input. A cubic runtime is also confirmed by the measured data. However, multiplication in Haskell can be subquadratic for large integers \cite{fsf2020gnu}, so the cubic bound for our algorithm is only an upper bound. 

The following lemma shows the two standard properties of the rounded-down square root and states that these properties are characteristic of it:

\begin{lemma}
\label{Lem:PosSqrtChar}
The universal closures of the following statements hold:
\begin{equation*}
\begin{array}{ll}
\href{https://github.com/FranziskusWiesnet/MinlogArith/blob/1c63c302aa52e8972b594a3ef8753fa11a0435eb/minlog/lib/pos.scm#L7715}{\mathtt{PosSquareSqrtUpBound}}: & \lfloor \sqrt{p} \rfloor \cdot \lfloor \sqrt{p} \rfloor \leq p  \\
\href{https://github.com/FranziskusWiesnet/MinlogArith/blob/1c63c302aa52e8972b594a3ef8753fa11a0435eb/minlog/lib/pos.scm#L7725}{\mathtt{PosSquareSqrtLowBound}}: & \lfloor \sqrt{p} \rfloor< q \to p < q\cdot q\\
\href{https://github.com/FranziskusWiesnet/MinlogArith/blob/1c63c302aa52e8972b594a3ef8753fa11a0435eb/minlog/lib/pos.scm#L7766}{\mathtt{PosSqrtChar}}: & q\cdot q \leq p \to \forall_r(q<r \to p< r \cdot r)\to q = \lfloor\sqrt p \rfloor
\end{array}
\end{equation*}
\end{lemma}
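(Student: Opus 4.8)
The plan is to reduce all three statements to Lemma~\ref{Lem:NuProp} applied to the Boolean‑valued function $wf := \lambda_q(q\cdot q\leq p)$ and the bound $n := \lfloor \texttt{PosLog}\ p/2\rfloor + 1$, since by definition $\lfloor\sqrt p\rfloor = \nu(wf,n)$. To invoke that lemma one first has to discharge its three hypotheses. The condition $wf(1)$ unfolds to $1\cdot 1\leq p$, which holds because every positive binary number is at least $1$. The monotonicity hypothesis asks that $wf(r)\to wf(q)$ whenever $q<r$; this follows from the fact that $q<r$ implies $q\cdot q\leq r\cdot r$, so from $r\cdot r\leq p$ we get $q\cdot q\leq r\cdot r\leq p$ — a routine monotonicity property of multiplication on positive binary numbers already available in the Minlog library.

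The one genuinely computational point is the remaining hypothesis $\neg wf(2^n)$, i.e.\ $p < 2^n\cdot 2^n$. Here I would use the standard bracketing $2^{\texttt{PosLog}\ p}\leq p < 2^{\texttt{PosLog}\ p + 1}$ together with the arithmetic inequality $2\lfloor m/2\rfloor + 2 \geq m+1$, valid for all $m:\NN$. Since $2n = 2\lfloor \texttt{PosLog}\ p/2\rfloor + 2 \geq \texttt{PosLog}\ p + 1$ and $m\mapsto 2^m$ is monotone, we obtain $p < 2^{\texttt{PosLog}\ p + 1}\leq 2^{2n} = 2^n\cdot 2^n$, as required. I expect this to be the main obstacle, not because it is deep but because it requires chaining several lemmas about $\texttt{PosLog}$, powers of two, and floor division, and pinning down the off‑by‑one in the floor estimate exactly.

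With the three hypotheses established, Lemma~\ref{Lem:NuProp} delivers the conclusions almost verbatim. Its first part, $wf(\nu(wf,n))$, unfolds to $\lfloor\sqrt p\rfloor\cdot\lfloor\sqrt p\rfloor\leq p$, which is \texttt{PosSquareSqrtUpBound}. Its second part, $\neg wf(q)$ for every $q>\nu(wf,n)$, says $q\cdot q\not\leq p$, hence $p<q\cdot q$ by totality of the order on positive binary numbers; thus $\lfloor\sqrt p\rfloor<q\to p<q\cdot q$, which is \texttt{PosSquareSqrtLowBound}. For the characterisation \texttt{PosSqrtChar}, I would instantiate the last conjunct of Lemma~\ref{Lem:NuProp}, $\forall_{p'}\bigl(wf(p')\to\forall_q(p'<q\to\neg wf(q))\to p'=\nu(wf,n)\bigr)$, with $p'$ set to the given $q$; unfolding $wf$ turns the two premises into precisely $q\cdot q\leq p$ and $\forall_r(q<r\to p<r\cdot r)$, and the conclusion into $q=\lfloor\sqrt p\rfloor$. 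So once the preconditions of Lemma~\ref{Lem:NuProp} are checked, no further case analysis is needed.
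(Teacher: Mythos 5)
Your proposal is correct and follows essentially the same route as the paper, which likewise derives all three statements directly from Lemma~\ref{Lem:NuProp} applied to $wf:=\lambda_q(q\cdot q\leq p)$ with bound $n:=\lfloor \texttt{PosLog}\ p/2\rfloor+1$ (the paper simply omits the details and defers to the Minlog code). Your verification of the three hypotheses — in particular the digit-count estimate $p<2^{\texttt{PosLog}\ p+1}\leq 2^{2n}$ for $\neg wf(2^n)$ — is exactly the bookkeeping the formalisation has to carry out.
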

\begin{proof}
The properties follow directly by the properties of the $\nu$-operator in Lemma \ref{Lem:NuProp}. We again refer to the Minlog code for details. 
\end{proof}
As we have seen, the above definition of the square root is already quite fast, since it is based on interval bisection. However, in the special case of the square root, the definition can be reformulated slightly so that the runtime becomes more efficient. The following algorithm is adapted from \cite{trostle2014fast}\footnote{Thanks to the anonymous reviewer who pointed me to this work and to the algorithm.} and has already been implemented in Nuprl. 
\begin{definition}[\texorpdfstring{\href{https://github.com/FranziskusWiesnet/MinlogArith/blob/1c63c302aa52e8972b594a3ef8753fa11a0435eb/minlog/examples/arith/factor_pos.scm\#L17}{\texttt{FastSqrt}}}{FastSqrt}]
\label{Def:FastSqrt}
We define the program constant $\mathtt{FastSqrt}:\PP\to \PP$ by the computation rules
\begin{align*}
\mathtt{FastSqrt}\ 1 &:= 1\\
\mathtt{FastSqrt}\ 2 &:= 1\\
\mathtt{FastSqrt}\ 3 &:= 1\\
\mathtt{FastSqrt}\ \Suc_{\alpha} \Suc_{\beta} &:= \lambda_q\left(
\begin{cases} 
\SOne q \quad\text{ if } \left(\Suc_{\alpha} \Suc_{\beta} p\right) < (\SOne q)\cdot (\SOne q)\\
\SZero q \quad\text{ else} 
\end{cases}
\right)(\mathtt{FastSqrt}\ p)
\end{align*}
where $\alpha,\beta\in\{0,1\}$, and therefore last line formally represents four rules.
\end{definition}
The above definitions of \texttt{FastSqrt} and \texttt{PosSqrt} have in common that they require at most $\left\lfloor(\texttt{PosLog}\ p)/2\right\rfloor + 1$ steps to compute, and that each step involves one multiplication and one comparison.
However, the difference is that in \texttt{FastSqrt} the number of digits of the argument is reduced by two at each step, and the number of digits of the result is reduced by one. Hence, \texttt{FastSqrt} and \texttt{PosSqrt} exhibit the same polynomial behaviour, but \texttt{FastSqrt} is asymptotically faster than \texttt{PosSqrt}.

That said, \texttt{FastSqrt} has a problem on large inputs: it starts from the input argument, shifts it by two binary digits, and first needs the result of \texttt{FastSqrt} on the shifted argument, and so on. As a consequence, all steps of \texttt{FastSqrt} must be fully unfolded before the first step can be completed. For large numbers, this leads to enormous memory allocation. While \texttt{PosSqrt} produced a result for an input of 200\,000 decimal digits without any issues (albeit after a long runtime), computations using  \texttt{FastSqrt} were consistently killed by the kernel already for inputs of 100\,000 decimal digits. Even for smaller inputs, the runtimes of \texttt{FastSqrt} are fluctuating. For inputs between 70\,000 and 90\,000 decimal digits, the process was sometimes killed before completion, whereas in other runs (for instance after restarting the PC) we did obtain a result. In \texttt{factor\_pos\_test.txt} we documented several tests, and in Appendix A.2 we summarised the runtimes up to 50\,000 digits in steps of 5\,000 in tabular form. A polynomial fit up to degree 9 does not yield a meaningful result. A fit up to degree 6 gives a quadratic approximation. However, the plot shows that the values fluctuate strongly. A reliable polynomial approximation is therefore not possible. Nevertheless, the data suggest that, asymptotically, \texttt{FastSqrt} is almost twice as fast as \texttt{PosSqrt}.

The following lemma provides important properties of FastSqrt, in particular the equality of \texttt{FastSqrt}  and \texttt{PosSqrt}.
\begin{lemma}
The universal closures of the following statements hold:
\begin{equation*}
\begin{array}{ll}
\href{https://github.com/FranziskusWiesnet/MinlogArith/blob/1c63c302aa52e8972b594a3ef8753fa11a0435eb/minlog/examples/arith/factor_pos.scm#L113}{\mathtt{PosSquareFastSqrtUpBound}}: & (\mathtt{FastSqrt}\ p) \cdot (\mathtt{FastSqrt}\ p) \leq p  \\
\href{https://github.com/FranziskusWiesnet/MinlogArith/blob/1c63c302aa52e8972b594a3ef8753fa11a0435eb/minlog/examples/arith/factor_pos.scm#L179}{\mathtt{PosSquareFastSqrtLowBound}}: & \mathtt{FastSqrt}\ p< q \to p < q\cdot q\\
\href{https://github.com/FranziskusWiesnet/MinlogArith/blob/1c63c302aa52e8972b594a3ef8753fa11a0435eb/minlog/examples/arith/factor_pos.scm#L378}{\mathtt{FastSqrtEqPosSqrt}}: & \mathtt{FastSqrt}\ p = \lfloor \sqrt{p} \rfloor
\end{array}
\end{equation*}
\end{lemma}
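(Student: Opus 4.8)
The plan is to establish the first two statements, \texttt{PosSquareFastSqrtUpBound} and \texttt{PosSquareFastSqrtLowBound}, by an induction following the recursion of \texttt{FastSqrt} from Definition~\ref{Def:FastSqrt}, and then to read off \texttt{FastSqrtEqPosSqrt} from these two together with the characterisation \texttt{PosSqrtChar} of Lemma~\ref{Lem:PosSqrtChar}. Since \texttt{FastSqrt} pattern-matches on $1$, $2$, $3$ and $\SPos\SPos p$, the induction I would use has these three constants as base cases and a step from $p$ to $\SPos\SPos p$, the latter standing for four instances (one for each choice of the two $\SPos$); because $\texttt{PosLog}(\SPos\SPos p)=\texttt{PosLog}\,p+2$, this is conveniently realised as a complete induction on $\texttt{PosLog}\,p$, or equivalently via a derived two-step induction principle on $\PP$. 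The first two statements do not use each other's induction hypothesis, so they may be proved separately or together.

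In the base cases \texttt{FastSqrt} returns $1$, and both claims are immediate: $1\cdot 1\le p$ holds for $p\in\{1,2,3\}$, and $1<q$ forces $q\cdot q\ge 4>p$. For the step, put $s:=\texttt{FastSqrt}\,p$; the induction hypothesis gives $s\cdot s\le p$ and $\forall_q(s<q\to p<q\cdot q)$, and in particular $p<(s+1)\cdot(s+1)$. Since $\SPos\SPos p$ is one of $4p$, $4p+1$, $4p+2$, $4p+3$ according to the two constructors, one first records
\begin{align*}
(\SZero s)\cdot(\SZero s)&=4\cdot(s\cdot s)\le 4p\le\SPos\SPos p,\\
(2s+2)\cdot(2s+2)&=4\cdot(s+1)\cdot(s+1)\ge 4\cdot(p+1)>\SPos\SPos p.
\end{align*}
One then case-splits on the Boolean test of Definition~\ref{Def:FastSqrt}, whose effect is to compare $(\SOne s)\cdot(\SOne s)$ with $\SPos\SPos p$. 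In the branch where $\texttt{FastSqrt}(\SPos\SPos p)=\SOne s$ one has $(\SOne s)\cdot(\SOne s)\le\SPos\SPos p$, which is the upper bound, while the lower bound holds because $\SOne s<q$ forces $q\ge 2s+2$ and hence $q\cdot q\ge(2s+2)\cdot(2s+2)>\SPos\SPos p$. In the branch where $\texttt{FastSqrt}(\SPos\SPos p)=\SZero s$ one has $\SPos\SPos p<(\SOne s)\cdot(\SOne s)$; the upper bound is then the first displayed inequality, and for the lower bound $\SZero s<q$ gives $q\ge 2s+1$, where $q=2s+1$ is exactly the negated test and $q\ge 2s+2$ is again the second displayed inequality.

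Finally, \texttt{FastSqrtEqPosSqrt} requires no further induction: instantiating the variable $q$ in \texttt{PosSqrtChar} of Lemma~\ref{Lem:PosSqrtChar} with $\texttt{FastSqrt}\,p$, the first premise becomes \texttt{PosSquareFastSqrtUpBound}, the second premise becomes the universal closure of \texttt{PosSquareFastSqrtLowBound}, and the conclusion is $\texttt{FastSqrt}\,p=\lfloor\sqrt p\rfloor$. I expect the only genuine obstacle to be the step case: tracking the four possible values of $\SPos\SPos p$ while handling the two successor constructors of $\PP$, and phrasing the $\texttt{PosLog}$-based induction so that the recursive call on $p$ is actually covered. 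Everything else reduces to the routine inequalities above.
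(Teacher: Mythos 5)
Your proposal is correct and takes essentially the same approach as the paper: an induction following the two-step recursion of $\mathtt{FastSqrt}$ with a case split on its Boolean test for the first two statements, and then $\mathtt{FastSqrtEqPosSqrt}$ read off directly from $\mathtt{PosSqrtChar}$ of Lemma~\ref{Lem:PosSqrtChar}. The only (cosmetic) difference is how the two-step recursion is captured: the paper proves the conjunction of the claim for $p$, $\SZero p$ and $\SOne p$ by ordinary structural induction on $p$, whereas you use a derived two-step induction principle (equivalently, induction on $\operatorname{PosLog} p$) -- the two devices are interchangeable.
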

\begin{proof}
For \texttt{FastSquareSqrtUpBound}, one proves 
\begin{align*}
(\mathtt{FastSqrt}\  p) \cdot (\mathtt{FastSqrt}\  p) \leq \phantom{\SZero } p&\quad \wedge \\
(\mathtt{FastSqrt}\ \SZero p) \cdot (\mathtt{FastSqrt}\ \SZero p) \leq \SZero p&\quad \wedge \\
 (\mathtt{FastSqrt}\ \SOne p) \cdot (\mathtt{FastSqrt}\ \SOne p) \leq \SOne p & 
\end{align*}
by induction on $p$ and by a case distinction according to the definition of $\mathtt{FastSqrt}$. The statement \texttt{FastSquareSqrtLowBound} is proved by a similar idea.  \texttt{FastSqrtEqPosSqrt} follows then directly from \texttt{PosSqrtChar} of Lemma \ref{Lem:PosSqrtChar}. For details, we refer to the Minlog file. 
\end{proof}
Since \texttt{PosSqrt} produces more consistent results, we will mainly use it whenever it is not runtime critical. However, in Section \ref{Sec:Fermat} we will apply the square-root function iteratively, so we will rely on \texttt{FastSqrt} there.

\section{Divisibility and Greatest Common Divisor}
\subsection{Divisibility on Natural Numbers}
We will define the divisibility relation on natural numbers directly using the bounded existential quantifier. We will use the same notation for positive binary numbers and natural numbers. The context will make it clear whether we are referring to divisibility on natural numbers or on positive binary numbers. Sometimes, we will consider statements for both cases, allowing us to cover both with this notation.

\begin{definition}[\texorpdfstring{\href{https://github.com/FranziskusWiesnet/MinlogArith/blob/1c63c302aa52e8972b594a3ef8753fa11a0435eb/minlog/examples/arith/gcd_nat.scm\#L11}{\texttt{NatDiv}}}{NatDiv}]
\begin{align*}
n\mid m \quad :=\quad \exbn{\Suc m}_l l\cdot n=m
\end{align*}
\end{definition}

The following lemma shows that the divisibility relation has the usual properties of a factorisation into a product.
\begin{lemma}
\label{Lem:NatProdToDiv}
The universal closures of the following statements hold:
\begin{equation*}
\begin{array}{ll}
\href{https://github.com/FranziskusWiesnet/MinlogArith/blob/1c63c302aa52e8972b594a3ef8753fa11a0435eb/minlog/examples/arith/gcd_nat.scm#L24}{\mathtt{NatProdToDiv}}: & l\cdot m=n \to m\mid n\\
\href{https://github.com/FranziskusWiesnet/MinlogArith/blob/1c63c302aa52e8972b594a3ef8753fa11a0435eb/minlog/examples/arith/gcd_nat.scm#L63}{\mathtt{NatDivToProdNc}}: & m \mid n \to \exnc_l\ l\cdot m = n\\
\href{https://github.com/FranziskusWiesnet/MinlogArith/blob/1c63c302aa52e8972b594a3ef8753fa11a0435eb/minlog/examples/arith/gcd_nat.scm#L49}{\mathtt{NatDivToProd}}: & m \mid n \to \exl_l\ l\cdot m = n
\end{array}
\end{equation*}
\end{lemma}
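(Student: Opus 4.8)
The plan is to treat the three statements separately, since after unfolding the definition of \texttt{NatDiv} --- the formula $m\mid n$ is by definition $\exbn{\Suc n}_l(l\cdot m=n)$ --- they become, respectively, an introduction rule, a non-computational elimination rule, and a computational elimination rule for the bounded existential quantifier $\exists^{<}$ applied to the predicate $\lambda_l(l\cdot m=n)$. Each is therefore an almost immediate corollary of the results in Lemmas~\ref{Lem:ExBNatProp} and~\ref{Lem:ExBPosToEx}.

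For \texttt{NatProdToDiv}: assuming $l\cdot m=n$, I want a witness strictly below the bound $\Suc n$. The point that needs a little care is that the given factor $l$ need not itself satisfy $l<\Suc n$; this fails exactly in the degenerate case $m=0$, $n=0$, where $l$ may be arbitrary. So I would distinguish cases on $n$ (equivalently, on whether $m=0$). If $n=0$, the witness $0$ works, since $0<\Suc 0$ and $0\cdot m=0$; if $n=\Suc n'$, then $l\cdot m=n\ge 1$ forces $m\ge 1$, hence $l\le l\cdot m=n<\Suc n$, so $l$ itself is a legal witness. In either case the conclusion follows from \texttt{ExBNatIntro} of Lemma~\ref{Lem:ExBNatProp}.

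For \texttt{NatDivToProdNc}: unfold $m\mid n$ to $\exbn{\Suc n}_l(l\cdot m=n)$ and apply \texttt{ExBNatToExNc} of Lemma~\ref{Lem:ExBNatProp} with $ws:=\lambda_l(l\cdot m=n)$, obtaining $\exnc_{l<\Suc n}(l\cdot m=n)$; dropping the bound gives $\exnc_l(l\cdot m=n)$. Since \texttt{ExBNatToExNc} yields only a non-computational witness, this proof carries no computational content, matching the non-computational version stated.

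For \texttt{NatDivToProd}: this is the computational counterpart, and it is exactly the situation handled by Lemma~\ref{Lem:ExBPosToEx}. Instantiating that lemma with $ws:=\lambda_l(l\cdot m=n)$ (and bound $\Suc n$) converts $\exbn{\Suc n}_l(l\cdot m=n)$ directly into $\exl_l(l\cdot m=n)$, the extracted quotient being $\mu_{<\Suc n}(\lambda_l(l\cdot m=n))$, i.e.\ the factor is recovered by a bounded search from below. I do not expect a genuine obstacle anywhere; the closest thing to one is the degenerate case $m=n=0$ in \texttt{NatProdToDiv} noted above, together with the bookkeeping of keeping the bound variables disjoint from the ambient $m$ and $n$ when instantiating Lemma~\ref{Lem:ExBPosToEx}.
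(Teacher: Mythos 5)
Your proposal is correct and follows the same route as the paper: \texttt{NatProdToDiv} via \texttt{ExBNatIntro}, \texttt{NatDivToProdNc} via the definition of $m\mid n$ together with \texttt{ExBNatToExNc}, and \texttt{NatDivToProd} via Lemma~\ref{Lem:ExBPosToEx}. Your extra care about the degenerate case $m=n=0$ in \texttt{NatProdToDiv} (ensuring the witness lies strictly below the bound $\Suc n$) is a detail the paper's one-line proof leaves implicit, but it does not change the argument.
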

\begin{proof}
\texttt{NatProdToDiv} follows directly from \texttt{ExBNatIntro} from Lemma \ref{Lem:ExBNatProp},
\texttt{NatDivToProdNc} follows directly by the definition of $m\mid n$ and \texttt{ExBNatToExNc} from Lemma \ref{Lem:ExBNatProp}, and \texttt{NatDivToProd} follows directly by the definition of $m\mid n$ and Lemma \ref{Lem:ExBPosToEx}. 
\end{proof}
Only the last formula has computational content.
As discussed after Lemma~\ref{Lem:ExBPosToEx}, the computational content of the last formula comes from the $\mu$-operator, which searches through the natural numbers from below, one by one. Hence, the computational content of this formula is quite inefficient. In our proofs, we will (fortunately) only need to use the first and the second formula.
\subsection{Greatest Common Divisor}
\label{Sec:Gcd}
The greatest common divisor is often defined by its characteristic properties: it divides both numbers, and any number dividing both also divides the greatest common divisor. Proving the existence of such a greatest common divisor then yields an algorithm for computing it, namely the standard Euclidean algorithm. This would also be a nice application of program extraction from proofs in Minlog; in fact, something similar has already been carried out by Berger and Schwichtenberg in the context of Friedman’s $A$-translation \cite{berger1996greatest}.

Here we take the opposite approach: we first give an explicit algorithm and then prove that it satisfies the defining properties. One of the advantages of working with proof assistants in constructive mathematics, and in particular with Minlog, is that both approaches are available, hence one can choose whichever fits best. In this case we want to emphasise the algorithms, and therefore we first present them explicitly.

\begin{definition}[\texorpdfstring{\href{https://github.com/FranziskusWiesnet/MinlogArith/blob/1c63c302aa52e8972b594a3ef8753fa11a0435eb/minlog/examples/arith/gcd_nat.scm\#L243}{\texttt{NatGcd}}}{NatGcd}]
\label{Def:NatGcd}
The greatest common divisor on the natural numbers is defined by the following rules:
\begin{align*}
\gcd(0,n) &:= n\\
\gcd(m,0) &:= m\\
\gcd(\Suc m, \Suc n) &:= \begin{cases} \gcd(\Suc m, n-m) & \text{if }m<n \\ \gcd(m-n,\Suc n)& \text{otherwise } \end{cases}
\end{align*}
\end{definition}

Some readers might criticise that we use recursion using only $n-m$ or $m-n$, rather than on the remainder $n \bmod m$ or $m \bmod n$, as it is common in standard presentations of the Euclidean algorithm. Equivalently, one would use recursion on $n-l\cdot m$ (or $m-l\cdot n$), where $l$ is the largest integer such that the result remains non-negative.
However, this would require computing $l$ (or defining $\bmod$) explicitly, along with corresponding computation rules. On natural numbers, this is not more efficient than the computation rules we provide here.

As we use the unary representation of the natural numbers, the efficiency of this algorithm is very poor. Already the running time to compare two natural numbers is proportional to the minimum of the two numbers. The same holds for subtraction. Both operations occur in the recursive step. Moreover, computing $\gcd(n,1)$ in this way requires exactly $n+1$ calls. Hence, the runtime of the algorithm is at least proportional to the maximum of the two arguments.

Unlike in the case of the natural numbers, division with remainder can, in principle, be defined efficiently on positive binary numbers using the $\nu$-operator, i.e. $$\lfloor p / q \rfloor:= \nu\left(\lambda_{r}(rq\leq p), \Suc (\operatorname{poslog} p\ - \operatorname{poslog}q)\right)$$
and
\begin{align*}
\dot{\gcd}(p,q) &
:= \begin{cases} 
\dot{\gcd}(p, q - \lfloor p / q \rfloor \cdot p) & \text{if }p<q\text{ and }  q \neq \lfloor p / q \rfloor \cdot p \\ 
p & \text{if }p<q\text{ and }  q = \lfloor p / q \rfloor \cdot p \\ 
\dot{\gcd}(p- \lfloor q / p \rfloor \cdot q,q)& \text{if }q<p\text{ and }  p \neq \lfloor q / p \rfloor \cdot q \\ 
q & \text{otherwise}
\end{cases}
\end{align*}
In fact, the algorithm $\dot{\gcd}$ is defined as \href{https://github.com/FranziskusWiesnet/MinlogArith/blob/1c63c302aa52e8972b594a3ef8753fa11a0435eb/minlog/examples/arith/gcd_pos.scm#L1066}{\texttt{EuclidGcd}} in Minlog, hence we refer to the corresponding definition in the Minlog file \href{https://github.com/FranziskusWiesnet/MinlogArith/blob/1c63c302aa52e8972b594a3ef8753fa11a0435eb/minlog/examples/arith/gcd_pos.scm#L1050}{\texttt{gcd\_pos.scm}} for details. 
Here we see that to compute $\lfloor p / q \rfloor$ we need at most $\Suc (\operatorname{poslog} p-\operatorname{poslog} q)$ steps. In each step a multiplication with $q$ is executed. Hence, the runtime to compute $\lfloor p / q \rfloor$ is at most quadratic in the numbers of digits of $p$ and $q$. In the worst case of each recursive call $\lfloor p / q \rfloor = 1$ or $\lfloor q / p \rfloor = 1$ and therefore, the numbers decrease only as slowly as consecutive Fibonacci numbers. Since Fibonacci numbers grow exponentially, the number of recursive calls is linear in the number of digits. Therefore, the runtime of this version of the euclidean algorithm is at most cubic in the numbers of digits of the arguments, or formally
$$T_{\text{posEuclid}}(p,q)\in \mathcal{O}\left(\operatorname{max}\left(\operatorname{poslog}(p),\operatorname{poslog}(q)\right)^3\right).$$

This is also confirmed by our measured values in Appendix \ref{Sec:AppEuclid}.
Nonetheless, this is not even necessary as by using the representation of positive binary numbers, we can provide a significantly more efficient definition than the Euclidean algorithm. This algorithm is known in the literature as \emph{Stein's algorithm} (named after Josef Stein \cite{stein1967computational}), the \emph{binary Euclidean algorithm} or simply the \emph{binary GCD algorithm}.

\begin{definition}[\texorpdfstring{\href{https://github.com/FranziskusWiesnet/MinlogArith/blob/1c63c302aa52e8972b594a3ef8753fa11a0435eb/minlog/examples/arith/gcd_pos.scm\#L138}{\texttt{PosGcd}}}{PosGcd}]
\label{Def:PosGcd}
The greatest common divisor on the positive binary numbers is defined by the following rules:
\begin{align*}
\gcd(1,p) &:= 1\\
\gcd(\SZero p, 1) &:= 1\\
\gcd(\SZero p, \SZero q) &:=\SZero(\gcd(p,q))\\
\gcd(\SZero p, \SOne q) &:= \gcd(p,\SOne q)\\
\gcd(\SOne p, 1) &:= 1\\
\gcd(\SOne p, \SZero q) &:= \gcd(\SOne p, q)\\
\gcd(\SOne p, \SOne q) &:= \begin{cases} \gcd(\SOne p,q-p) &\text{if }p<q \\ \gcd(p-q,\SOne q) &\text{if }q<p\\ \SOne p &\text{otherwise.} \end{cases}
\end{align*}
\end{definition}

According to the definition, each computation rule eliminates at least one digit from the arguments, and only in the last case, two numbers have to be compared and, if necessary, subtracted. The running time for this can be assumed to be linear in the number of binary digits and the digit length is reduced in both arguments by one. In the other cases at least one digit is directly removed, which can be done in constant time. Therefore, Stein’s algorithm runs in at most quadratic time in the maximum of the digit lengths of its two arguments, or formally in Landau notation  
$$T_{\text{stein}}(p,q)\in \mathcal{O}\left(\max\left(\operatorname{poslog}(p),\operatorname{poslog}(q)\right)^2\right),$$
where $T_{\text{stein}}(p,q)$ denotes the runtime of Stein's algorithm with input $p,q$.

This observation also aligns very well with the measured data documented in the file \href{https://github.com/FranziskusWiesnet/MinlogArith/blob/1c63c302aa52e8972b594a3ef8753fa11a0435eb/test-files/gcd_pos_test.txt#L86}{\texttt{gcd\_pos\_test.txt}}, and summarised in Appendix \ref{Sec:AppStein}.
Here we also see that Stein's algorithm operates in a completely different league than the Euclidean algorithm: while the Euclidean algorithm on binary numbers already takes more than four minutes for 5,000 digits per argument, Stein’s algorithm needs less than a second even for 10\,000 digits per argument.

The next lemma shows that the two definitions of the greatest common divisor on the natural numbers and on the positive binary numbers are compatible with each other. This is an important property that any definition of the greatest common divisor on both types must satisfy.
\begin{lemma}[\texorpdfstring{\href{https://github.com/FranziskusWiesnet/MinlogArith/blob/1c63c302aa52e8972b594a3ef8753fa11a0435eb/minlog/examples/arith/gcd_pos.scm\#L303}{\texttt{PosGcdToNatGcd}}}{PosGcdToNatGcd},
\texorpdfstring{\href{https://github.com/FranziskusWiesnet/MinlogArith/blob/1c63c302aa52e8972b594a3ef8753fa11a0435eb/minlog/examples/arith/gcd_pos.scm\#L512}{\texttt{NatGcdToPosGcd}}}{NatGcdToPosGcd}]
\label{Lem:NatPosGcd}
The two definitions above are compatible with the embedding of the positive binary numbers into the natural numbers. In particular,
\begin{align*}
\forall_{p,q}\, \operatorname{PosToNat}(\gcd(p, q)) = \gcd(\operatorname{PosToNat} p, \operatorname{PosToNat} q)
\end{align*}
and
\begin{align*}
\forall_{n,m > 0}\, \gcd(n,m) = \operatorname{PosToNat}(\gcd(\operatorname{NatToPos} n, \operatorname{NatToPos} m)).
\end{align*}
\end{lemma}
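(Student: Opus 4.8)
The second identity is an easy consequence of the first: instantiating the first statement with $p:=\NatToPos\,n$ and $q:=\NatToPos\,m$ and using $\PosToNat(\NatToPos\,k)=k$ for $k>0$ (which is where the hypotheses $n,m>0$ are used) turns its right-hand side into $\gcd(n,m)$, while its left-hand side is exactly what the second statement claims. So all the work lies in the first identity.

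For the first identity the plan is to argue by induction along the recursion of Stein's algorithm, concretely by complete induction on the measure $\PosToNat\,p+\PosToNat\,q$ (equivalently, via the induction principle canonically attached to Definition~\ref{Def:PosGcd}). One then splits into the seven clauses of $\gcd$ on $\PP$. In each recursive clause one first checks that the measure strictly decreases -- this needs only that $\PosToNat$ is strictly monotone, that $\PosToNat(\SZero p)=2\,\PosToNat\,p$ and $\PosToNat(\SOne p)=2\,\PosToNat\,p+1$, and that $\PosToNat(q-p)=\PosToNat\,q-\PosToNat\,p$ when $p<q$ -- and then applies the inductive hypothesis together with the computation rules of $\gcd$ on $\PP$; this reduces the goal to an identity about $\gcd$ on $\NN$ alone.

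The substance is reconciling those identities with the differently shaped recursion of $\gcd$ on $\NN$ from Definition~\ref{Def:NatGcd}. Concretely one needs the following auxiliary lemmas about $\gcd$ on $\NN$: the boundary facts $\gcd(1,n)=\gcd(n,1)=1$ and $\gcd(n,n)=n$; the halving law $\gcd(2n,2m)=2\,\gcd(n,m)$, used for the clause $\gcd(\SZero p,\SZero q)=\SZero\gcd(p,q)$; the mixed-parity laws $\gcd(2n,2m+1)=\gcd(n,2m+1)$ and $\gcd(2n+1,2m)=\gcd(2n+1,m)$, used for the clauses that cancel a $0$-digit against a $1$-digit; and the Euclidean subtraction step $\gcd(n,m)=\gcd(n-m,m)$ for $m\le n$, which in the last clause is applied in the form $\gcd(2n+1,2m+1)=\gcd(2n+1,2(m-n))$ for $n<m$ and then combined with the mixed-parity law. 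The boundary facts and the subtraction step fall out of a short induction following the Euclidean recursion; the halving law follows by a secondary induction on $n+m$; and the mixed-parity law is cleanest to obtain from the characteristic divisibility property of $\gcd$ on $\NN$ (it divides both arguments, and every common divisor divides it), which one establishes once for the explicit algorithm of Definition~\ref{Def:NatGcd}.

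I expect the main obstacle to be precisely this bridging: the two algorithms recurse on incompatible structures -- binary-digit peeling versus repeated subtraction -- so there is no term-by-term correspondence, and everything must be routed through the algebraic gcd-identities above; among these the mixed-parity law is the least trivial, since it implicitly uses that $2$ is prime. A cleaner alternative that avoids the digit bookkeeping altogether is to prove, each by induction on its own recursion, that both $\gcd(n,m)$ on $\NN$ and $\PosToNat(\gcd(\NatToPos\,n,\NatToPos\,m))$ satisfy the universal property of the greatest common divisor, and then to conclude by uniqueness (antisymmetry of divisibility on $\NN$); this trades the bridging lemmas for two characterisation proofs, and matches the paper's remark that the explicit algorithms are shown to satisfy the defining properties.
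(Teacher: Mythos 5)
Your proposal is correct and takes essentially the same route as the paper: reduce the second identity to the first, then prove the first by induction on a bound for $p+q$ with a case distinction on the constructors of $p$ and $q$, handling the $\SOne$/$\SOne$ case via the subtraction step. The bridging identities for $\gcd$ on $\NN$ (halving, mixed parity, boundary cases) that you spell out are precisely the details the paper omits and defers to the Minlog code.
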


\begin{proof}
It suffices to prove the first formula, as the second follows directly from it. To do so, we prove the equivalent statement:
\begin{align*}
\forall_{n,p,q}\big(p+q < n \to \operatorname{PosToNat}(\gcd(p, q)) = \gcd(\operatorname{PosToNat} p, \operatorname{PosToNat} q)\big).
\end{align*}
This is done by induction on $n$, followed by a case distinction on $p$ and $q$. In each case, either $p$ or $q$ is 1, and the statement holds trivially, or the induction hypothesis can be applied directly -- except in the case where $p = \SOne p'$ and $q = \SOne q'$. In this case, we first do a case distinction on whether $p < q$, then apply the induction hypothesis to $p$ and $\SZero(q' - p')$, or to $\SZero(p' - q')$ and $q$, respectively.
The detailed execution of the proof is lengthy but does not yield additional insights. Therefore, we refer to the Minlog code for further details.
\end{proof}

Note that the symbol $\gcd$ is overloaded here and is used both for natural numbers and for positive binary numbers, and the corresponding definitions (Definition \ref{Def:NatGcd} and Definition \ref{Def:PosGcd}) are quite different. Hence, a formal proof of this statement is by no means trivial. By contrast, the Euclidean algorithm on positive binary numbers
$\dot\gcd$ is only defined for the purpose of comparing runtimes after Definition~\ref{Def:NatGcd} at the end of Section \ref{Sec:Bezout}, and therefore plays no role in the theorems of this article.
\subsection{Divisibility on Positive Binary Numbers}
\label{Sec:PosDiv}
In contrast to the definition of divisibility on natural numbers, we do not define divisibility on positive numbers using the bounded existential quantifier, as this would be far too inefficient. Instead, we use the greatest common divisor, since its definition on the positive binary numbers is very efficient.
\begin{definition}[\texorpdfstring{\href{https://github.com/FranziskusWiesnet/MinlogArith/blob/1c63c302aa52e8972b594a3ef8753fa11a0435eb/minlog/examples/arith/gcd_pos.scm\#L1424}{\texttt{PosDiv}}}{PosDiv}]
\begin{align*}
p\mid q\quad  :=\quad  \gcd(p,q) = p
\end{align*}
\end{definition}

As in the case of natural numbers, we also obtain the typical product representation of divisibility here. In this case, however, there would be no difference between the proof using the nc existential quantifier and the one using the cr existential quantifier. Therefore, we only provide the stronger version with the cr existential quantifier:

\begin{lemma}[\texorpdfstring{\href{https://github.com/FranziskusWiesnet/MinlogArith/blob/1c63c302aa52e8972b594a3ef8753fa11a0435eb/minlog/examples/arith/gcd_pos.scm\#L1458}{\texttt{PosProdToDiv}}}{PosProdToDiv},
\texorpdfstring{\href{https://github.com/FranziskusWiesnet/MinlogArith/blob/1c63c302aa52e8972b594a3ef8753fa11a0435eb/minlog/examples/arith/gcd_pos.scm\#L1486}{\texttt{PosDivToProd}}}{PosDivToProd}]
\begin{align*}
&\forall_{p,q,r}\left(r\cdot p = q \to p\mid q\right)\\
&\forall_{p,q}\left(p \mid q \to \exl_r\ r\cdot p =q\right)
\end{align*}
\end{lemma}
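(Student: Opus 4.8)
The plan is to handle the two statements separately, since the first carries no computational content while the second does. For the first statement, note that $p\mid q$ unfolds to $\gcd(p,q)=p$, so it suffices to prove $\gcd(p,r\cdot p)=p$ for all $p,r$. I would reduce this to the natural numbers: applying $\operatorname{PosToNat}$, using that $\operatorname{PosToNat}$ respects multiplication, and invoking Lemma~\ref{Lem:NatPosGcd}, the goal becomes $\gcd(n,m\cdot n)=n$ in $\NN$ with $n=\operatorname{PosToNat} p$ and $m=\operatorname{PosToNat} r$. This natural-number statement follows by induction on $m$ from Definition~\ref{Def:NatGcd}: for $m=0$ it is the base clause $\gcd(0,n)=n$; for $m=1$ one unfolds the recursive clause of $\gcd$ on $(\Suc n',\Suc n')$, which takes the ``otherwise'' branch to $\gcd(0,\Suc n')=\Suc n'$; and for $m\geq 2$ the recursive clause strips one copy of $n$, so the induction hypothesis applies to $(m-1)\cdot n$. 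Injectivity of $\operatorname{PosToNat}$ then transports $\gcd(p,r\cdot p)=p$ back to $\PP$.

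For the second statement I would avoid the detour through $\NN$, because the extracted program should remain efficient and natural-number division routes through a $\mu$-search. Instead I would first establish, by induction following the seven clauses of Definition~\ref{Def:PosGcd}, the auxiliary fact $\forall_{p,q}\exl_r\ r\cdot\gcd(p,q)=q$ (``$\gcd(p,q)$ divides $q$ as a product''): in the base clauses the witness is read off directly; in the digit-stripping clauses it is obtained from the induction hypothesis after accounting for the factor $2$ that $\SZero$ or $\SOne$ contributes; and in the final clause ($p=\SOne p'$, $q=\SOne q'$) one first splits on $p<q$, $q<p$, or equality, and adjusts the witness across the subtraction. Given this, the second statement is immediate: from $p\mid q$, i.e.\ $\gcd(p,q)=p$, substitution into the auxiliary fact yields $\exl_r\ r\cdot p=q$.

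I expect the main obstacle to be the final recursive case of this auxiliary induction, where one must relate a product witness for $\gcd(\SOne p',q-p)$ (respectively $\gcd(p-q,\SOne q')$) to a witness for $\gcd(\SOne p',\SOne q')$; this needs a few arithmetic identities on $\PP$ together with a cancellation lemma of the form $r\cdot p=s\cdot p\to r=s$, which in turn relies on the absence of zero divisors. A secondary nuisance is the $\operatorname{PosToNat}$/$\operatorname{NatToPos}$ bookkeeping in the first statement, where in particular a product $m\cdot\operatorname{PosToNat} p$ equal to $\operatorname{PosToNat} q$ forces $m>0$, so that it is of the form $\operatorname{PosToNat} r$. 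Both of these are routine but must be done carefully; the computational content of the theorem is entirely concentrated in the recursive auxiliary lemma for the second statement.
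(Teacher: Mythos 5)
Your overall strategy matches the paper's: the first formula is non-computational and is discharged by transporting along $\operatorname{PosToNat}$ and Lemma~\ref{Lem:NatPosGcd} (the paper then invokes \texttt{NatProdToDiv} together with the gcd characterisation on $\NN$ rather than re-proving $\gcd(n,m\cdot n)=n$ by a fresh induction on $m$, but your direct induction also works), and the second formula is proved by recursion along the clauses of Definition~\ref{Def:PosGcd}. Two remarks on the second part, one substantive. First, the ``induction following the seven clauses'' is not a structural induction, since e.g.\ the clause $\gcd(\SOne p,\SOne q)=\gcd(\SOne p,q-p)$ does not decrease either argument structurally; the paper makes the required measure explicit by proving the equivalent statement $\forall_{l,p,q}(p+q<l\to p\mid q\to\exl_r\,r\cdot p=q)$ and inducting on $l$. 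You would need the same device.

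The substantive gap is in your auxiliary lemma $\forall_{p,q}\exl_r\,r\cdot\gcd(p,q)=q$: as stated, its induction does not close in the final clause. With $g:=\gcd(\SOne p,\SOne q)=\gcd(\SOne p,q-p)$ (case $p<q$), the inductive hypothesis gives $r$ with $r\cdot g=q-p$, and the goal $\SOne q=(2p+1)+2(q-p)$ then requires writing $\SOne p=2p+1$ as a multiple of $g$ as well --- i.e.\ the \emph{companion} fact $\exl_s\,s\cdot\gcd(p',q')=p'$, which your one-sided statement does not supply. (This is also why no cancellation lemma is needed; the real missing ingredient is the other divisibility.) The standard fixes are either to prove the two-sided conjunction $\exl_s\,s\cdot\gcd(p,q)=p\;\andr\;\exl_r\,r\cdot\gcd(p,q)=q$ simultaneously, or --- as the paper does --- to carry the hypothesis $p\mid q$, i.e.\ $\gcd(p,q)=p$, through the induction, which makes the companion fact trivial ($1\cdot p=p$) and lets the witness in the last clause be read off as $\SOne r$. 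With either repair your argument goes through and yields essentially the same extracted program.
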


\begin{proof}
Since the first formula expresses a non-computational equality, we can prove it by reducing it to the case of natural numbers, in particular by using Lemma \ref{Lem:NatPosGcd} and Lemma \ref{Lem:NatProdToDiv}.

For the second formula, we prove the equivalent statement: 
\begin{align*} 
\forall_{l,p,q}\left(p+q<l \to p \mid q \to \exl_r\ r\cdot p =q\right)
\end{align*} 
by induction on $l$ and followed by a case distinction on $p$ and $q$. This case distinction allows us to apply the computation rules from Definition \ref{Def:PosGcd} and subsequently use the induction hypothesis on the updated arguments of $\gcd$, which are in sum smaller than $l$. (Therefore we prove the equivalent and not the original statement.) In the case where $p=\SOne p'$ and $q=\SOne q'$, an additional case distinction regarding whether $p'<q'$ is required before applying the induction hypothesis.
\end{proof}

\subsection{Properties of Divisibility}
In this section, we present the following lemmas on divisibility and the greatest common divisor. Their proofs are very straightforward and simple. Therefore, we will not provide them here but refer to the corresponding formalisations in Minlog.

\begin{lemma}\label{Lem:DivPreOrd}
Divisibility on the positive and natural numbers defines a partial order. That is, the universal closures of the following statements hold:
\begin{equation*}
\begin{array}{ll}
    \href{https://github.com/FranziskusWiesnet/MinlogArith/blob/1c63c302aa52e8972b594a3ef8753fa11a0435eb/minlog/examples/arith/gcd_nat.scm#L77}{\mathtt{NatDivRefl}},\ \href{https://github.com/FranziskusWiesnet/MinlogArith/blob/1c63c302aa52e8972b594a3ef8753fa11a0435eb/minlog/examples/arith/gcd_pos.scm#L1635}{\mathtt{PosDivRefl}}: & p\mid p \\
    \href{https://github.com/FranziskusWiesnet/MinlogArith/blob/1c63c302aa52e8972b594a3ef8753fa11a0435eb/minlog/examples/arith/gcd_nat.scm#L89}{\mathtt{NatDivTrans}},\ \href{https://github.com/FranziskusWiesnet/MinlogArith/blob/1c63c302aa52e8972b594a3ef8753fa11a0435eb/minlog/examples/arith/gcd_pos.scm#L1610}{\mathtt{PosDivTrans}}: & p_0\mid p_1 \to p_1\mid p_2\to p_0\mid p_2 \\
    \href{https://github.com/FranziskusWiesnet/MinlogArith/blob/1c63c302aa52e8972b594a3ef8753fa11a0435eb/minlog/examples/arith/gcd_nat.scm#L141}{\mathtt{NatDivAntiSym}},\ \href{https://github.com/FranziskusWiesnet/MinlogArith/blob/1c63c302aa52e8972b594a3ef8753fa11a0435eb/minlog/examples/arith/gcd_pos.scm#L1624}{\mathtt{PosDivAntiSym}}: & p\mid q \to q\mid p \to p= q
\end{array}
\end{equation*}
\end{lemma}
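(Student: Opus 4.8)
The plan is to deduce all three properties from the product characterisation of divisibility — Lemma~\ref{Lem:NatProdToDiv} on the natural numbers and the analogous statements \texttt{PosProdToDiv} and \texttt{PosDivToProd} on the positive binary numbers. Since both $n\mid m$ and $p\mid q$ unfold to a decidable, hence non-computational, equality, there is no obstacle to moving freely between ``$p\mid q$'' and ``there is a cofactor $r$ with $r\cdot p = q$''; on the natural side the nc variant \texttt{NatDivToProdNc} is all that is needed. Reflexivity is then immediate: from $1\cdot p = p$ one applies \texttt{NatProdToDiv} (respectively \texttt{PosProdToDiv}) with cofactor $1$ to conclude $p\mid p$.

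For transitivity I would assume $p_0\mid p_1$ and $p_1\mid p_2$, extract cofactors $a$ with $a\cdot p_0 = p_1$ and $b$ with $b\cdot p_1 = p_2$, and compute $(b\cdot a)\cdot p_0 = b\cdot(a\cdot p_0) = b\cdot p_1 = p_2$ by associativity; then \texttt{NatProdToDiv} (respectively \texttt{PosProdToDiv}) yields $p_0\mid p_2$.

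For antisymmetry I would assume $p\mid q$ and $q\mid p$, extract $b$ with $b\cdot p = q$ and $a$ with $a\cdot q = p$, and substitute to get $(a\cdot b)\cdot p = a\cdot(b\cdot p) = a\cdot q = p$. On $\PP$ this immediately forces $a\cdot b = 1$, since $r\cdot p\geq p$ holds on the positive binary numbers with equality only for $r = 1$, and a product of positive numbers equals $1$ only when both factors are $1$; hence $b = 1$ and $q = b\cdot p = p$. On $\NN$ one first disposes of the cases $p = 0$ (where $b\cdot p = q$ already gives $q = 0 = p$) and $q = 0$ symmetrically, and for $p,q > 0$ cancels $p$ in $(a\cdot b)\cdot p = p$ to obtain $a\cdot b = 1$, whence $a = b = 1$ and $p = a\cdot q = q$.

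The only real subtlety is the natural-number antisymmetry: multiplication on $\NN$ fails to be cancellative at $0$, so the zero cases have to be split off explicitly (or the cancellation lemma invoked in a form that already covers them). All the remaining steps are just rewriting with associativity and the unit law, which is exactly why the formal proofs stay short; for the routine bookkeeping we refer to the Minlog files linked above.
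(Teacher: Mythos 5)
Your proposal is correct, and it fills in exactly the kind of routine argument the paper intends: the paper itself omits the proof and defers to the Minlog formalisation, and deriving all three properties from the product characterisation (Lemma~\ref{Lem:NatProdToDiv} on $\NN$, \texttt{PosProdToDiv}/\texttt{PosDivToProd} on $\PP$) is the natural route, with the nc existential sufficing because the conclusions are non-computational equalities. You also correctly isolate the one genuine subtlety, namely that antisymmetry on $\NN$ needs the zero cases split off before cancelling $p$.
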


\begin{lemma}\label{Lem:DivProp}
The universal closures of the following statements hold:
\begin{equation*}
\begin{array}{ll}
    \href{https://github.com/FranziskusWiesnet/MinlogArith/blob/1c63c302aa52e8972b594a3ef8753fa11a0435eb/minlog/examples/arith/gcd_nat.scm#L169}{\mathtt{NatDivPlus}},\ \href{https://github.com/FranziskusWiesnet/MinlogArith/blob/1c63c302aa52e8972b594a3ef8753fa11a0435eb/minlog/examples/arith/gcd_pos.scm#L1650}{\mathtt{PosDivPlus}}: & p\mid q_0 \to p\mid q_1 \to p\mid q_0+q_1\\
    \href{https://github.com/FranziskusWiesnet/MinlogArith/blob/1c63c302aa52e8972b594a3ef8753fa11a0435eb/minlog/examples/arith/gcd_nat.scm#L196}{\mathtt{NatDivPlusRev}},\ \href{https://github.com/FranziskusWiesnet/MinlogArith/blob/1c63c302aa52e8972b594a3ef8753fa11a0435eb/minlog/examples/arith/gcd_pos.scm#L1677}{\mathtt{PosDivPlusRev}}: & p\mid q_0 \to p\mid q_0+q_1 \to p\mid q_1\\
    \href{https://github.com/FranziskusWiesnet/MinlogArith/blob/1c63c302aa52e8972b594a3ef8753fa11a0435eb/minlog/examples/arith/gcd_nat.scm#L184}{\mathtt{NatDivTimes}},\ \href{https://github.com/FranziskusWiesnet/MinlogArith/blob/1c63c302aa52e8972b594a3ef8753fa11a0435eb/minlog/examples/arith/gcd_pos.scm#L1665}{\mathtt{PosDivTimes}}: & p\mid q_0  \to p\mid q\cdot q_0
\end{array}
\end{equation*}
\end{lemma}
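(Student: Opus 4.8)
The plan is to prove each of the three statements by passing through the product characterisation of divisibility: \texttt{NatProdToDiv} and \texttt{NatDivToProdNc} from Lemma~\ref{Lem:NatProdToDiv} on the natural numbers, and \texttt{PosProdToDiv}/\texttt{PosDivToProd} on the positive binary numbers. Since divisibility is non-computational in both settings ($n\mid m$ is a Boolean term on $\NN$, and $p\mid q$ abbreviates the decidable equality $\gcd(p,q)=p$ on $\PP$), we are free to extract witnesses with the nc existential quantifier and to work entirely with Leibniz equalities; in each case the argument then reduces to a one-line arithmetic identity followed by an appeal to \texttt{NatProdToDiv} (resp. \texttt{PosProdToDiv}). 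Each statement must be carried out twice, once for $\NN$ and once for $\PP$, which accounts for the paired Minlog names.

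For \texttt{NatDivPlus}/\texttt{PosDivPlus}: from $p\mid q_0$ and $p\mid q_1$ extract witnesses $l_0,l_1$ with $l_0\cdot p=q_0$ and $l_1\cdot p=q_1$; then $(l_0+l_1)\cdot p = l_0\cdot p + l_1\cdot p = q_0+q_1$ by distributivity, and \texttt{NatProdToDiv} (resp. \texttt{PosProdToDiv}) yields $p\mid q_0+q_1$. For \texttt{NatDivTimes}/\texttt{PosDivTimes}: from $p\mid q_0$ get $l_0$ with $l_0\cdot p=q_0$, observe that $(q\cdot l_0)\cdot p = q\cdot(l_0\cdot p)=q\cdot q_0$ by associativity and commutativity of multiplication, and conclude $p\mid q\cdot q_0$. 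Both are immediate once the product characterisation and the basic ring identities for $\NN$ and $\PP$ are in place.

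For \texttt{NatDivPlusRev}/\texttt{PosDivPlusRev}: from $p\mid q_0$ and $p\mid q_0+q_1$ obtain $l_0,l$ with $l_0\cdot p=q_0$ and $l\cdot p=q_0+q_1$; the witness for $p\mid q_1$ is the difference $l-l_0$, using that $(l-l_0)\cdot p = l\cdot p - l_0\cdot p = (q_0+q_1)-q_0 = q_1$ and then \texttt{NatProdToDiv}. I expect the only genuinely delicate point to be this use of subtraction. On $\NN$ one needs the (unconditional) identities that truncated subtraction distributes over multiplication and that $(q_0+q_1)-q_0=q_1$, which also painlessly covers the degenerate case $p=0$ (forcing $q_0=q_0+q_1=0$, hence $q_1=0$). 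On $\PP$ there is no $0$, but from $l_0\cdot p=q_0<q_0+q_1=l\cdot p$ one first deduces $l_0<l$, so that $l-l_0$ is a bona fide positive number and the corresponding cancellation identities for $\PP$ apply. Locating and applying the requisite subtraction lemmas for $\NN$ and $\PP$ is the main — but still routine — obstacle; everything else is a direct appeal to the product representation of divisibility. For the full details we refer to the Minlog implementation.
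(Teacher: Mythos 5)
The paper itself gives no proof of this lemma; it only remarks that the proofs are ``very straightforward'' and refers to the Minlog formalisation, so there is no in-paper argument to compare against. Your route through the product characterisation of divisibility (Lemma~\ref{Lem:NatProdToDiv} for $\NN$, \texttt{PosDivToProd}/\texttt{PosProdToDiv} for $\PP$) is correct: since each conclusion is non-computational, nc witnesses suffice, and the only genuinely delicate points --- that truncated subtraction on $\NN$ distributes over multiplication so that $(l-l_0)\cdot p=q_1$ holds unconditionally (including $p=0$), and that on $\PP$ one must first derive $l_0<l$ by cancellation so that $l-l_0$ is a bona fide positive number --- are exactly the ones you identify and handle appropriately.
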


\begin{lemma}\label{Lem:GcdChar}
The universal closures of the following statements hold:
\begin{equation*}
\begin{array}{ll}
    \href{https://github.com/FranziskusWiesnet/MinlogArith/blob/1c63c302aa52e8972b594a3ef8753fa11a0435eb/minlog/examples/arith/gcd_nat.scm#L394}{\mathtt{NatGcdDiv0}},\ \href{https://github.com/FranziskusWiesnet/MinlogArith/blob/1c63c302aa52e8972b594a3ef8753fa11a0435eb/minlog/examples/arith/gcd_pos.scm#L1703}{\mathtt{PosGcdDiv0}}: & \gcd(p,q)\mid p\\
    \href{https://github.com/FranziskusWiesnet/MinlogArith/blob/1c63c302aa52e8972b594a3ef8753fa11a0435eb/minlog/examples/arith/gcd_nat.scm#L450}{\mathtt{NatGcdDiv1}},\ \href{https://github.com/FranziskusWiesnet/MinlogArith/blob/1c63c302aa52e8972b594a3ef8753fa11a0435eb/minlog/examples/arith/gcd_pos.scm#L1721}{\mathtt{PosGcdDiv1}}: & \gcd(p,q)\mid q\\
    \href{https://github.com/FranziskusWiesnet/MinlogArith/blob/1c63c302aa52e8972b594a3ef8753fa11a0435eb/minlog/examples/arith/gcd_nat.scm#L458}{\mathtt{NatDivGcd}},\ \href{https://github.com/FranziskusWiesnet/MinlogArith/blob/1c63c302aa52e8972b594a3ef8753fa11a0435eb/minlog/examples/arith/gcd_pos.scm#L1729}{\mathtt{PosDivGcd}}: & q\mid p_0 \to q\mid p_1 \to q \mid \gcd(p_0,p_1)
\end{array}
\end{equation*}
\end{lemma}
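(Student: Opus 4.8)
The plan is to establish all three statements first over the natural numbers, by induction following the recursive structure of Definition~\ref{Def:NatGcd}, and then to obtain the positive-binary versions by transporting them along the embedding $\PosToNat$. As with the proof of \texttt{PosProdToDiv} above, this transport is legitimate because $p\mid q$ on the positive binary numbers is by definition the decidable (hence non-computational) equality $\gcd(p,q)=p$, so no computational content is lost; the only external ingredient is the compatibility $\PosToNat(\gcd(p,q))=\gcd(\PosToNat p,\PosToNat q)$ from Lemma~\ref{Lem:NatPosGcd}.

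For the natural numbers I would prove the conjunction $\gcd(m,n)\mid m \wedge \gcd(m,n)\mid n$ in one go (both halves are needed to feed the induction hypothesis), by induction on a bound $l$ for $m+n$ and a case distinction following the three computation rules of $\gcd$. The base cases $\gcd(0,n)=n$ and $\gcd(m,0)=m$ are immediate from $x\mid 0$ and reflexivity (Lemma~\ref{Lem:DivPreOrd}). In the recursive case $\gcd(\Suc m,\Suc n)=\gcd(\Suc m,n-m)$ with $m<n$, the induction hypothesis already gives $\gcd(\Suc m,n-m)\mid\Suc m$, and from $\gcd(\Suc m,n-m)\mid n-m$ together with the identity $\Suc n=\Suc m+(n-m)$ (valid since $m<n$) we obtain $\gcd(\Suc m,n-m)\mid\Suc n$ via \texttt{NatDivPlus} of Lemma~\ref{Lem:DivProp}; the case $m\geq n$ is symmetric. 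Statement~3 for the naturals goes by the same induction: in the recursive case we must propagate a common divisor $q$ of $\Suc m$ and $\Suc n$ to a common divisor of $\Suc m$ and $n-m$, which follows from $q\mid\Suc m$, $q\mid\Suc m+(n-m)$ and \texttt{NatDivPlusRev} of Lemma~\ref{Lem:DivProp}.

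For the positive-binary versions I would avoid repeating the much longer case analysis over the seven rules of Definition~\ref{Def:PosGcd} and instead reduce to the natural numbers. From the natural-number statements one first derives the characterisation $n\mid m \leftrightarrow \gcd(n,m)=n$ (the forward direction uses statements~1 and~3 together with \texttt{NatDivAntiSym} of Lemma~\ref{Lem:DivPreOrd}); combined with injectivity of $\PosToNat$ and Lemma~\ref{Lem:NatPosGcd}, this converts each positive divisibility $\gcd(p,q)=p$ into the corresponding natural divisibility $\PosToNat p\mid\PosToNat q$ and back, so all three statements follow directly from their natural-number counterparts. The main obstacle in a \emph{direct} positive-binary induction would be exactly the rules that halve or double an argument, e.g.\ $\gcd(\SZero p,\SOne q)=\gcd(p,\SOne q)$ for statement~3: there a common divisor $r$ of $\SZero p=2p$ and $\SOne q$ need not divide $p$, so the induction hypothesis on $(p,\SOne q)$ does not apply without an ad hoc ``odd divisor of an even number'' argument — precisely the kind of reasoning that the natural-number formulation, having no doubling rule, makes unnecessary.
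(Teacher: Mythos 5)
Your proposal is correct, and it supplies a proof that the paper deliberately omits (the text only remarks that these proofs are ``very straightforward'' and defers to the Minlog sources). Your strategy is consistent with the paper's own methodology elsewhere: the natural-number statements by induction on a bound for $m+n$ following the three computation rules of Definition~\ref{Def:NatGcd}, with \texttt{NatDivPlus}/\texttt{NatDivPlusRev} handling the subtraction step, is exactly the kind of argument used for Lemma~\ref{Lem:NatPosGcd} and \texttt{PosDivToProd}; and your transport of the positive-binary versions along $\PosToNat$ via Lemma~\ref{Lem:NatPosGcd} is the same device the paper uses for the first half of \texttt{PosProdToDiv}, legitimate for the same reason (positive divisibility $\gcd(p,q)=p$ is a non-computational Boolean equation). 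Your closing observation is also well taken: for $\mathtt{PosGcdDiv0}$/$\mathtt{PosGcdDiv1}$ a direct induction over the seven rules of Definition~\ref{Def:PosGcd} actually goes through (the rule $\gcd(\SZero p,\SOne q)=\gcd(p,\SOne q)$ only needs $\gcd(p,\SOne q)\mid p \to \gcd(p,\SOne q)\mid \SZero p$, i.e.\ \texttt{PosDivTimes}), whereas $\mathtt{PosDivGcd}$ genuinely requires the ``odd divisor of $2p$ divides $p$'' detour in that rule, so reducing to the natural numbers is the cleaner route for the third statement. One small bookkeeping point: for the backward leg of the transport (turning $\gcd(Q,P_i)=Q$ over $\NN$ back into natural divisibility) you also need statement~2, not only statements~1 and~3 with antisymmetry, but that is available by the time you need it.
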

The last lemma also represents a characterisation of the greatest common divisor. That is, if a binary function $f$ on the positive binary numbers (or, respectively, on the natural numbers) satisfies these three statements instead of the gcd, then it is identical to the gcd. This can easily be proved by showing $f(x,y)\mid \gcd(x,y)$ and $\gcd(x,y)\mid f(x,y)$ for given $x$ and $y$.
\subsection{Bézout's identity}
\label{Sec:Bezout}
\renewcommand{\subparagraph}[1]{\vspace{1mm}\noindent\textit{#1}.}
In the standard literature (i.e.~\cite[Corollar 4.11.]{forster2015algorithmische} and \cite[Satz 3.9]{muellerstach2011elementare}), Bézout's identity is known as the statement that the greatest common divisor is a linear combination of the two arguments. That is, for integers $a,b$ there are integers $u,v$ with $\gcd(a,b) = u\cdot a + v\cdot b$. There are several methods to determine these two coefficients. The most well-known method is the extended Euclidean algorithm. However, there are also extended versions of Stein's algorithm or other efficient algorithms to determine these coefficients (see e.g.~\cite[Section 14.4.3]{menezes2018handbook} or \cite{barkema2019extending,cohen1993course}).

However, some of these methods necessarily require negative numbers. In this article, however, we aim to avoid negative numbers in order to keep the data types for an implementation as simple as possible. For this reason, we also need to reformulate the statement of Bézout's identity. We do this by moving the negative terms to the other side of the equation and using case distinction:
\begin{theorem}[\Href{https://github.com/FranziskusWiesnet/MinlogArith/blob/1c63c302aa52e8972b594a3ef8753fa11a0435eb/minlog/examples/arith/gcd_nat.scm\#L514}{NatGcdToLinComb}] \label{Thm:NatGcdToLinComb}
\begin{align*}
\forall_{n,m}\exd_{l_0}\exl_{l_1}(\gcd(n,m)+l_0\cdot n = l_1\cdot m \oru \gcd(n,m)+l_0\cdot m=l_1\cdot n)
\end{align*}
\end{theorem}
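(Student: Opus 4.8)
The plan is to follow the recursive structure of the Euclidean algorithm (Definition~\ref{Def:NatGcd}) and to carry the Bézout coefficients along each recursive call. Since $\gcd$ recurses on arguments whose sum strictly decreases, I would not prove the statement as given but the equivalent bounded version
\begin{align*}
\forall_{k,n,m}\bigl(n+m<k \to \exd_{l_0}\exl_{l_1}(\gcd(n,m)+l_0\cdot n = l_1\cdot m \oru \gcd(n,m)+l_0\cdot m=l_1\cdot n)\bigr),
\end{align*}
with $k:\NN$ an auxiliary bound. I would prove this by induction on $k$ (the base case $k=0$ being vacuous via ex falso) and then, in the step case, do a case distinction on $n$ and $m$ that mirrors the clauses defining $\gcd$.

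In the base cases $n=0$ or $m=0$ one has $\gcd(0,m)=m$ and $\gcd(n,0)=n$, so the witnesses $l_0:=0$, $l_1:=1$ satisfy the first, respectively the second, disjunct. For $n=\Suc n_0$ and $m=\Suc m_0$ I would case-split on whether $n_0<m_0$, following the Boolean test in Definition~\ref{Def:NatGcd}. If $n_0<m_0$, then $\gcd(n,m)=\gcd(n,m-n)$ and the induction hypothesis applies (the pair $(n,m-n)$ has sum $m$, which is strictly smaller), yielding $l_0,l_1$ with $\gcd(n,m)+l_0\cdot n=l_1\cdot(m-n)$ or $\gcd(n,m)+l_0\cdot(m-n)=l_1\cdot n$; adding the appropriate multiple of $n$ to both sides turns the first equation into $\gcd(n,m)+(l_0+l_1)\cdot n=l_1\cdot m$ (first disjunct) and the second into $\gcd(n,m)+l_0\cdot m=(l_0+l_1)\cdot n$ (second disjunct). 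If $\neg(n_0<m_0)$, a further split on whether $n_0=m_0$ isolates the diagonal case $n=m$, where $\gcd(n,m)=m=n$ and again $l_0:=0$, $l_1:=1$ works; the remaining case $n_0>m_0$ is symmetric to the previous one, this time via $\gcd(n,m)=\gcd(n-m,m)$.

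The hard part will be the bookkeeping with truncated natural-number subtraction. The rearrangements above rest on identities such as $l\cdot(m-n)=l\cdot m-l\cdot n$ and the cancellation $(a-b)+b=a$, which hold unconditionally for truncated subtraction but still have to be threaded carefully through each subcase; moreover, in each branch one must verify that $m>n$ (respectively $n>m$), both to ensure the subtractions are genuine and to determine which of the two disjuncts the transformed coefficients end up witnessing. Everything else is routine: matching the proof's case structure to the computation rules of $\gcd$ so that the gcd terms reduce and the induction hypothesis becomes applicable on the smaller arguments. As a byproduct, the extracted term is a variant of the extended Euclidean algorithm that never leaves the natural numbers, returning the pair $(l_0,l_1)$ together with a marker for the disjunct that holds.
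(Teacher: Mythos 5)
Your proposal is correct and follows essentially the same route as the paper: the paper omits the details of this proof but states that it parallels the proof of Theorem~\ref{Thm:PosGcdToLinComb}, which likewise proves the equivalent bounded statement ($p_0+p_1<l$), inducts on the bound, and case-splits along the clauses of the gcd recursion while transporting the coefficients through each branch. Your handling of the base cases, the diagonal case, and the truncated-subtraction rearrangements (adding $l_1\cdot n$ resp.\ $l_0\cdot n$ back to both sides, using $m>n$ to justify $l\cdot(m-n)=l\cdot m-l\cdot n$) is exactly the bookkeeping the formalisation has to do.
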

\begin{proof}
The proof is similar to, and even simpler than, the proof of the next theorem. Therefore, we omit the details here and refer to the Minlog code.
\end{proof}

For positive binary numbers, we also do not have a 0 available. Therefore, we need to consider this case separately. If one of the coefficients were 0, the greatest common divisor would be exactly one of the two arguments. This occurs when one argument is a multiple of the other. Therefore, this also becomes part of the case distinction.
\begin{theorem}[\Href{https://github.com/FranziskusWiesnet/MinlogArith/blob/1c63c302aa52e8972b594a3ef8753fa11a0435eb/minlog/examples/arith/gcd_pos.scm\#L581}{PosGcdToLinComb}]
\label{Thm:PosGcdToLinComb}
\begin{align*}
\forall_{p_0,p_1}\bigl(\quad&
\exl_{q}\ q\cdot p_0 = p_1  \\
\ord\quad & \exl_{q}\ q\cdot p_1 = p_0\\ 
\ord\quad & \exd_{q_0} \exl_{q_1}\ \gcd(p_0,p_1)+q_0\cdot p_0 = q_1\cdot p_1 \\
\ord\quad & \exd_{q_0} \exl_{q_1}\ \gcd(p_0,p_1)+q_1\cdot p_1 = q_0\cdot p_0\bigr)
\end{align*}
\end{theorem}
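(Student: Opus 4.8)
The plan is to prove the statement directly, following the recursion of Stein's algorithm (Definition~\ref{Def:PosGcd}) in the way the extended Euclidean algorithm shadows the ordinary one; this also explains why the proof of Theorem~\ref{Thm:NatGcdToLinComb} was described as similar but simpler. One could instead reduce to Theorem~\ref{Thm:NatGcdToLinComb}, transporting along $\PosToNat$, $\NatToPos$ and Lemma~\ref{Lem:NatPosGcd} and splitting according to whether the nat-coefficient on the subtracted side vanishes; but that only moves the work into Theorem~\ref{Thm:NatGcdToLinComb}, which is itself proved by the recursion just mentioned, so I will carry out the direct proof.

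First I would strengthen the goal to $\forall_n\forall_{p_0,p_1}(p_0+p_1<n\to D(p_0,p_1))$, where $D$ denotes the four-way disjunction, and induct on $n$. Since every computation rule of Definition~\ref{Def:PosGcd} strictly decreases $p_0+p_1$, the induction hypothesis is then available for each pair produced by one unfolding step of $\gcd$. After the induction I would case-distinguish on $p_0$ and $p_1$ along the seven clauses of Definition~\ref{Def:PosGcd}, using as auxiliary facts that $\gcd(p_0,p_1)$ divides both arguments and that a common divisor divides $\gcd$ (Lemma~\ref{Lem:GcdChar}), antisymmetry of $\mid$ (Lemma~\ref{Lem:DivPreOrd}), and the equivalence between $p\mid q$ and a product representation (\texttt{PosProdToDiv}, \texttt{PosDivToProd}).

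The clauses fall into three kinds. When one argument is $1$ (the clauses for $\gcd(1,p)$, $\gcd(\SZero p,1)$, $\gcd(\SOne p,1)$), that argument divides the other and the first or second disjunct holds with an obvious witness. For $\gcd(\SZero p,\SZero q)=\SZero\gcd(p,q)$ I would apply the hypothesis to $(p,q)$ and multiply the resulting equation by two: a divisibility conclusion for $(p,q)$ doubles to one for $(\SZero p,\SZero q)$, and a linear combination doubles to one with the same coefficients. For the mixed clauses $\gcd(\SZero p,\SOne q)=\gcd(p,\SOne q)$ and $\gcd(\SOne p,\SZero q)=\gcd(\SOne p,q)$, and for the genuine Euclidean clause $\gcd(\SOne p,\SOne q)$ with its inner split on $p<q$, $q<p$ and $p=q$, I would apply the hypothesis to the reduced pair and rewrite its equation in the original arguments. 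In each of these clauses exactly one argument --- the odd one --- is left unchanged, while passing back to the original pair requires dividing a coefficient by two (because the other argument, or a difference of the two arguments, has picked up a factor of two in the reduction); so a coefficient returned by the hypothesis may first have to be made even, by adding a suitable multiple of the odd argument to both sides, before the division is legitimate and produces a positive binary coefficient for the original pair. Whenever this would force a coefficient to be $0$ --- roughly, when the reduced pair already stands in a divisibility relation, including the degenerate case $p=q$ --- the original pair turns out to be in a divisibility relation and one uses one of the first two disjuncts instead.

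The main obstacle is precisely this bookkeeping rather than any deep mathematics. The $0$-free and negation-free four-way disjunction has to be threaded through every clause; in the mixed and $\SOne,\SOne$ cases one must track parities, decide which of the four disjuncts the current branch yields, and certify that every coefficient handed back is a genuine positive binary number and is never $0$. This is a large but shallow case analysis, exactly the kind of argument where an informal proof tends to glide over a boundary case, so here the formal Minlog development is the trustworthy reference --- in line with the paper's own remarks on unary versus binary representations.
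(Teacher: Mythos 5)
Your proposal matches the paper's proof essentially step for step: the same strengthening to $\forall_{l,p_0,p_1}(p_0+p_1<l\to D(p_0,p_1))$ with induction on $l$, the same case split along the clauses of Definition~\ref{Def:PosGcd}, doubling in the both-even case, and lifting the induction hypothesis's identity in the mixed and both-odd cases by adding a multiple of the odd argument to both sides so that the relevant coefficient becomes even (the paper always adds $q_0p_0'p_1$, making the coefficient $q_0(p_1+1)$, which avoids your parity case split). The only slight inaccuracy is your remark that the degenerate subcases always fall back to the divisibility disjuncts: in the mixed case where the reduced pair satisfies $q\cdot p_0'=p_1$, the original pair is generally not in a divisibility relation and the paper instead produces the linear-combination disjunct with coefficient $p_1'$.
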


The proof we present here is based on Stein's algorithm of $\gcd$. Essentially, we do induction over the cases in Definition \ref{Def:PosGcd}. The extracted term can therefore be understood as a version of the extended Stein's algorithm. However, this is not the standard extended version of Stein's algorithm known from the literature, for example in \cite{cohen1993course,menezes2018handbook}. Therefore, it is not that efficient. However, we present this proof because it yields a new extension of Stein's algorithm that completely avoids the use of negative numbers.

In addition to the proof presented here, there is also a proof in Minlog that leads to the extended Euclidean algorithm as the standard extracted term (see \href{https://github.com/FranziskusWiesnet/MinlogArith/blob/1c63c302aa52e8972b594a3ef8753fa11a0435eb/minlog/examples/arith/gcd_pos.scm#L1212}{\texttt{PosGcdToLinCombEuclid}}).
\begin{proof}
We prove the equivalent statement
\begin{align*}
\forall_{l,p_0,p_1}\bigl(&\   p_0+p_1 <l \to \\
&\ \phantom{\ord}\ \exl_{q}\ q\cdot p_0 = p_1  \\
&\ord\ \exl_{q}\ q\cdot p_1 = p_0\\ 
&\ord\  \exd_{q_0} \exl_{q_1}\ \gcd(p_0,p_1)+q_0\cdot p_0 = q_1\cdot p_1 \\
&\ord\  \exd_{q_0} \exl_{q_1}\ \gcd(p_0,p_1)+q_1\cdot p_1 = q_0\cdot p_0\ \bigr)
\end{align*}
by induction on $l$. For $l=0$, $p_0+p_1<l$ is not possible and hence, there is nothing to show.

For the induction step, let $l$ with $p_0+p_1<\Suc l$ be given. We apply a case distinction on $p_0$ and $p_1$. In particular, $p_0=1$, $p_0=\SZero p'_0$ or $p_0=\SOne p'_0$ and analogously for $p_1$. 

\subparagraph{Case 1: $p_0=1$ or $p_1=1$} In this case either $\exl_{q} q\cdot p_0 = p_1$ or $\exl_{q} q\cdot p_1 = p_0$ and we are done.

\subparagraph{Case 2: $p_0=\SZero p'_0 =2\cdot p'_0$ and $p_1=\SZero p'_1=2\cdot p'_1$} Here, we have $\gcd(p_0,p_1)=2\cdot\gcd(p'_0,p'_1)$. The statement follows directly from the induction hypothesis applied to $p'_0$ and $p'_1$, with each resulting equation multiplied by 2.

\subparagraph{Case 3: $p_0=\SZero p'_0 =2\cdot p'_0$ and $p_1=\SOne p'_1=2\cdot p'_1 +1$ or vice versa} We consider, without loss of generality, only the first case, as the second case is analogous. Then, we have $\gcd(p_0,p_1)=\gcd(p'_0,p_1)$. Hence, we apply the induction hypothesis to $p'_0$ and $p_1$ and get 
\begin{align*}
& \exl_{q}\ q\cdot p'_0 = p_1 \\
\ord\quad & \exl_{q}\ q\cdot p_1 = p'_0\\ 
\ord\quad &  \exd_{q_0} \exl_{q_1}\ \gcd(p'_0,p_1)+q_0\cdot p'_0 = q_1\cdot p_1 \\
\ord\quad &  \exd_{q_0} \exl_{q_1}\ \gcd(p'_0,p_1)+q_1\cdot p_1 = q_0\cdot p'_0.
\end{align*}
We consider all four cases one by one:

\subparagraph{Case 3.1: $q\cdot p'_0 = p_1$ for some $q$} Then $\gcd(p_0,p_1)=\gcd(p'_0,p_1)=\gcd(p'_0,q\cdot p'_0)=p'_0$, therefore $\gcd(p_0,p_1)+p'_1\cdot p_0=p'_0+p'_1\cdot 2\cdot p'_0 = p'_0(2\cdot p'_1+1)=p'_0\cdot p_1$. This implies the third statement of the disjunctions.

\subparagraph{Case 3.2: $q\cdot p_1 = p'_0$ for some $q$} Here, $2\cdot q \cdot p_1 = p_0$ and we are done.

\subparagraph{Case 3.3: $\gcd(p'_0,p_1)+q_0\cdot p'_0 = q_1\cdot p_1$ for some $q_0,q_1$} We calculate 
\begin{align*}
\gcd(p_0,p_1) +(p'_1 +1)\cdot q_0 \cdot p_0 &= \gcd(p'_0,p_1)+q_0\cdot p'_0+ q_0\cdot p'_0  + p'_1\cdot q_0\cdot p_0 \\ &= q_1\cdot p_1 + q_0\cdot p'_0  + p'_1\cdot q_0\cdot p_0 \\
&=q_1\cdot p_1 + q_0\cdot p'_0  + p'_1\cdot q_0\cdot 2\cdot p'_0\\
&=q_1\cdot p_1 + (2\cdot p'_1 +1)\cdot q_0\cdot p'_0\\
&=(q_1+q_0\cdot p'_0)\cdot p_1,
\end{align*}
which proves the third part of the disjunction.

\subparagraph{Case 3.4: $\gcd(p'_0,p_1)+q_1\cdot p_1 = q_0\cdot p'_0$ for some $q_0,q_1$} We get
\begin{align*}
\gcd(p_0,p_1)+ (q_1+q_0\cdot p'_0)\cdot p_1 &=\gcd(p'_0,p_1)+q_1\cdot p_1 +q_0\cdot p'_0\cdot p_1\\
&=q_0\cdot p'_0 + q_0\cdot p'_0 \cdot p_1\\
&= q_0\cdot p'_0 + q_0\cdot p'_0 \cdot (2\cdot p'_1 +1)\\
&= q_0\cdot 2\cdot p'_0 + q_0\cdot 2\cdot p'_0 \cdot p'_1 \\
&=(q_0+q_0\cdot p'_1)\cdot p_0
\end{align*}
showing the fourth part of the disjunction.

\subparagraph{Case 4: $p_0=\SOne p'_0$ and $p_1=\SOne p'_1$} If $p_0=p_1$ we are obviously done. Hence, without loss of generality, we assume $p_0<p_1$, and therefore $\gcd(p_0,p_1)=\gcd(p_0,p'_1-p'_0)=\gcd(p_0,\SZero (p'_1-p'_0))=\gcd(p_0,\SZero p'_1-\SZero p'_0)$. Applying the induction hypothesis to $p_0$, $\SZero p'_1-\SZero p'_0$, we get
\begin{align*}
&\exl_{q}\ q\cdot p_0 = \SZero p'_1-\SZero p'_0 \\
\ord\quad & \exl_{q}\ q\cdot (\SZero p'_1-\SZero p'_0) = p_0 \\ 
\ord\quad &  \exd_{q_0} \exl_{q_1}\ \gcd(p_0,\SZero p'_1-\SZero p'_0)+q_0\cdot p_0 = q_1\cdot (\SZero p'_1-\SZero p'_0) \\
\ord\quad &  \exd_{q_0} \exl_{q_1}\ \gcd(p_0,\SZero p'_1-\SZero p'_0)+q_1\cdot (\SZero p'_1-\SZero p'_0) = q_0\cdot p_0.
\end{align*}
Again, we consider each case one by one.

\subparagraph{Case 4.1: $q\cdot p_0 = \SZero p'_1-\SZero p'_0$ for some $q$} We have directly $$(q+1)\cdot p_0 =(\SZero p'_1-\SZero p'_0)+\SOne p'_0=\SZero p'_1+1 = p_1,$$
which proves the first part of the disjunction.

\subparagraph{Case 4.2: $q\cdot (\SZero p'_1-\SZero p'_0) = p_0$ for some $q$} Here we have $\SZero(q\cdot ( p'_1-p'_0))=\SOne p'_0$, which is not possible, hence there is nothing to show.

\subparagraph{Case 4.3: $\gcd(p_0,\SZero p'_1-\SZero p'_0)+q_0\cdot p_0 = q_1\cdot (\SZero p'_1-\SZero p'_0)$ for some $q_0,q_1$} Here, we get 
\begin{align*}
\gcd(p_0,p_1) +(q_0+q_1)\cdot p_0 = q_1\cdot (\SZero p'_1-\SZero p'_0) +q_1\cdot p_0= q_1\cdot(\SZero p'_1 +1)=q_1\cdot p_1,  
\end{align*}
which proves the third part of the disjunction.

\subparagraph{Case 4.4: $\gcd(p_0,\SZero p'_1-\SZero p'_0)+q_1\cdot (\SZero p'_1-\SZero p'_0) = q_0\cdot p_0$ for some $q_0,q_1$} We calculate
\begin{align*}
\gcd(p_0,p_1) + q_1\cdot p_1 &= \gcd(p_0,\SZero p'_1-\SZero p'_0) +q_1\cdot p_1\\
&=  \gcd(p_0,\SZero p'_1-\SZero p'_0)+q_1\cdot (p_1 -p_0) +q_1\cdot p_0\\
&= \gcd(p_0,\SZero p'_1-\SZero p'_0)+ q_1\cdot (\SZero p'_1 -\SZero p'_0)+q_1\cdot p_0\\
&= q_0\cdot p_0 +q_1\cdot p_0 = (q_0+q_1)\cdot p_0,
\end{align*}
which proves the fourth part of the equation.
\end{proof}

Here, we have presented the proof in full, as Bézout's identity is a central result in elementary number theory. Furthermore, the proof leads to a new algorithm, which we will examine in more detail in the following paragraph.
Since the proof involves numerous case distinctions, formalisation in a proof assistant is particularly beneficial. This guarantees that no cases are overlooked and that those cases often regarded as trivial are indeed trivial and free of any hidden subtleties.

\paragraph{The extracted term.}
Due to the numerous case distinctions, the extracted term is also quite lengthy even when written in Haskell notation. However, this illustrates one of the major advantages of automatic program extraction: we do not need to construct and replicate the entire term manually, but instead receive it directly from the computer. 
Furthermore, due to the correctness theorem of proof extraction, we can be confident that the extracted term behaves as intended.

Nevertheless, if we take a look at the extracted term in the Haskell program, we notice that while there are many case distinctions, there is only a single recursion operator at the beginning. This comes from the induction on $l$ in the proof. Since there is no nested induction, but rather a single induction that, as seen in the proof, does not revert to the predecessor but instead halves at least one of the arguments, we can already conclude from this limited information that the extracted term represents an efficient algorithm.

However, when we consider Cases 3.3 and 3.4 in the proof, we observe a flaw in the algorithm: In Case 3.3, for instance, the new coefficients are \((p'_1 + 1) \cdot q_0\) and \(q_1 + q_0 \cdot p'_0\), where \(q_0\) and \(q_1\) are the original coefficients, and \(p_0\) and \(p_1\) are the arguments of \(\gcd\).
This involves the multiplication of two numbers, which approximately corresponds to an addition in the number of their digits. Since such operations can occur at each step of the recursion -- which is called roughly as many times as the number of digits in the input -- the total number of digits in the output is approximately proportional to the square of the number of digits in the input.
In the Euclidean algorithm, however, the number of digits in the output is roughly proportional to the number of digits in the input.

Tests documented in \href{https://github.com/FranziskusWiesnet/MinlogArith/blob/1c63c302aa52e8972b594a3ef8753fa11a0435eb/test-files/gcd_pos_test.txt#L172}{\texttt{gcd\_pos\_test.txt}} and summarised in Appendix \ref{Sec:AppExtStein} of the generated Haskell program, using random decimal numbers with up to 1000 digits, have shown that the corresponding coefficients can be computed in way under a minute -- even though the resulting coefficients together have over 4.6 million digits. Since the number of digits grow so very quickly (i.e. quadratic), tests with a substantially larger number of digits are hardly to carry out. Therefore, we were not able to reliably determine an approximating polynomial for the runtime from the measured data.
The algorithm extracted from the proof which emulates the Euclidean algorithm, operates with approximately half the runtime and yields a more concise output. Nonetheless, both algorithms result in polynomial growth in terms of input size and are therefore at least quite efficient in runtime. However, since in each step the number of digits increases at most quadratically, and the number of steps -- just as for the standard Stein algorithm -- is linear in the number of digits, the runtime should grow cubically with the digit length of the arguments. Therefore, in the plot in Appendix \ref{Sec:AppExtStein} we included the approximating cubic polynomial fitted to the measured values. However, since the degree was restricted to 3, this does not necessarily mean that the runtime truly grows cubically.

For comparison, we have also implemented the extended Euclidean algorithm on positive binary numbers as \href{https://github.com/FranziskusWiesnet/MinlogArith/blob/1c63c302aa52e8972b594a3ef8753fa11a0435eb/minlog/examples/arith/gcd_pos.scm#L1212}{\texttt{PosGcdToLinCombEuclid}} in Minlog, and likewise recorded runtime measurements for it in \href{https://github.com/FranziskusWiesnet/MinlogArith/blob/1c63c302aa52e8972b594a3ef8753fa11a0435eb/test-files/gcd_pos_test.txt#L256}{\texttt{gcd\_pos\_test.txt}} and summarised them in Appendix \ref{Sec:AppExtEuclid}. Since we already observed that the runtime of the Euclidean algorithm grows cubically with the number of input digits, and since the number of digits of the coefficients grows only linearly with the number of input digits, it is reasonable to expect that the cubic growth persists, which is exactly what the measured data suggest.

For the purposes of this article, however, the computational use of Bézout’s identity is not required, and we therefore refrain from pursuing further improvements in this regard. What is essential in our context is the efficient computation of the greatest common divisor, which we have achieved using Stein’s algorithm. For future work, it would be worthwhile to investigate an extension of Stein’s algorithm within Minlog, as done on paper in \cite{barkema2019extending} and \cite[Algorithm 14.61]{menezes2018handbook}.

\section{Prime Numbers}
Prime numbers are an integral part of elementary number theory. In this section, we introduce the theory of prime numbers within {\TCF} and examine their computational aspects. We will see that even in the case of positive binary numbers, handling primes involves significant computational effort.
\subsection{Definition of Primes}
A number \(n > 1\) is called composite if it has a divisor greater than 1 and smaller than itself; otherwise, it is prime. This property is defined using the bounded existential quantifier. As a result, determining whether a number is composite or prime is relatively inefficient and typically requires many computational steps. 
In the case of positive binary numbers, we will at least take advantage of the efficient integer square root function to restrict the search to divisors up to the square root of \(n\).
\begin{definition}[\texorpdfstring{\href{https://github.com/FranziskusWiesnet/MinlogArith/blob/1c63c302aa52e8972b594a3ef8753fa11a0435eb/minlog/examples/arith/prime_nat.scm\#L12}{\texttt{NatComposed}}}{NatComposed},
\texorpdfstring{\href{https://github.com/FranziskusWiesnet/MinlogArith/blob/1c63c302aa52e8972b594a3ef8753fa11a0435eb/minlog/examples/arith/prime_pos.scm\#L15}{\texttt{PosComposed}}}{PosComposed},
\texorpdfstring{\href{https://github.com/FranziskusWiesnet/MinlogArith/blob/1c63c302aa52e8972b594a3ef8753fa11a0435eb/minlog/examples/arith/prime_nat.scm\#L23}{\texttt{NatPrime}}}{NatPrime},
\texorpdfstring{\href{https://github.com/FranziskusWiesnet/MinlogArith/blob/1c63c302aa52e8972b594a3ef8753fa11a0435eb/minlog/examples/arith/prime_pos.scm\#L26}{\texttt{PosPrime}}}{PosPrime}]
Let $n$ be a natural number. We define the program constant $\operatorname{NatComposed}: \NN \to \BB$ by
\[
\operatorname{NatComposed}\ n\quad :=\quad \exbn{n}_m \ \left(1 < m \andb m \mid n\right).
\]
Similarly, for a positive number $p$, we define $\operatorname{PosComposed}: \PP \to \BB$ by
\[
\operatorname{PosComposed}\ p\quad :=\quad \exbp{\lfloor \sqrt{p} \rfloor}_q \ \left(1 < q \andb q \mid p\right).
\]
A natural number $n$ is defined as prime by
\[
\operatorname{NatPrime}\ n\quad :=\quad \neg(\operatorname{NatComposed}\ n) \andb n > 1.
\]
A positive number $p$ is defined as prime by
\[
\operatorname{PosPrime}\ p\quad :=\quad \neg(\operatorname{PosComposed}\ p) \andb p > 1.
\]
When it is clear from the context, or if we refer to both, we simply write
$\Prime{p}$ for both $\operatorname{NatPrime}\ p$ and $\operatorname{PosPrime}\ p$. For a sequence $ps$ of positive binary numbers or natural numbers $\Primes{ps}{n}$ means that the first $n$ numbers of $ps$ are prime. Formally it is defined by the rules $\Primes{ps}{0} := \true$ and $\Primes{ps}{\Suc n} := \Primes{ps}{n} \andb \Prime{ps\ n}$.
\end{definition}

In the following sections of this article, we will often use another characterisation of a prime number, which is that it is greater than 1, and if it can be expressed as the product of two numbers, one of these numbers is 1. This is expressed in the following lemma:

\begin{lemma}[\Href{https://github.com/FranziskusWiesnet/MinlogArith/blob/1c63c302aa52e8972b594a3ef8753fa11a0435eb/minlog/examples/arith/prime_nat.scm\#L34}{NatPrimeProd},
\Href{https://github.com/FranziskusWiesnet/MinlogArith/blob/1c63c302aa52e8972b594a3ef8753fa11a0435eb/minlog/examples/arith/prime_nat.scm\#L85}{NatProdToPrime},
\Href{https://github.com/FranziskusWiesnet/MinlogArith/blob/1c63c302aa52e8972b594a3ef8753fa11a0435eb/minlog/examples/arith/prime_pos.scm\#L37}{PosPrimeProd}, \Href{https://github.com/FranziskusWiesnet/MinlogArith/blob/1c63c302aa52e8972b594a3ef8753fa11a0435eb/minlog/examples/arith/prime_pos.scm\#L105}{PosProdToPrime}]
\label{Lem:PrimeChar}
For every natural or positive number \(p\), the following statement is equivalent to \(p\) being prime:
\[
1 < p\ \ \andnc\ \forall_{q_0, q_1} \bigl(q_0 \cdot q_1 = p \to q_0 = 1 \;\orb \; q_1 = 1\bigr).
\]
\end{lemma}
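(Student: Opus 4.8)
\textit{Proof proposal.}
The statement is a biconditional between the Boolean predicate $\Prime{p}$ and the formula $1 < p \andnc \forall_{q_0,q_1}(q_0\cdot q_1 = p \to q_0 = 1 \orb q_1 = 1)$. Both sides are non-computational -- they are built only from decidable ($\andb$, $\orb$) and Leibniz equalities -- so no attention to extracted content is required, and the plan is to prove the two implications separately, splitting each into the version for $\NN$ and the version for $\PP$ (this accounts for the four named Minlog statements). Throughout, I write $\operatorname{Composed}$ for $\operatorname{NatComposed}$ or $\operatorname{PosComposed}$ as appropriate, so that $\Prime{p}$ unfolds to $\neg(\operatorname{Composed}\,p) \andb p > 1$.

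For the forward direction, assume $\Prime{p}$; then $p > 1$ is immediate and it remains to establish the universal statement. Let $q_0, q_1$ with $q_0\cdot q_1 = p$ be given and do a case distinction on whether $q_0 = 1$. If so we are done, and otherwise it suffices to refute the further assumption $q_1 \neq 1$. From $p > 1$ and $q_0\cdot q_1 = p$ neither factor is $0$, so in both settings $q_0 \neq 1$ and $q_1 \neq 1$ upgrade to $1 < q_0$ and $1 < q_1$. In the natural-number case, $1 < q_1$ gives $q_0 < q_0\cdot q_1 = p$, and $q_1\cdot q_0 = p$ gives $q_0 \mid p$ by \texttt{NatProdToDiv} (Lemma~\ref{Lem:NatProdToDiv}); hence \texttt{ExBNatIntro} from Lemma~\ref{Lem:ExBNatProp}, with witness $q_0$, yields $\operatorname{NatComposed}\,p$, contradicting $\neg(\operatorname{NatComposed}\,p)$. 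In the positive case one must additionally get below the square-root bound: without loss of generality $q_0 \le q_1$ (the situation is symmetric in the two factors), so $q_0\cdot q_0 \le q_0\cdot q_1 = p$, and the contrapositive of \texttt{PosSquareSqrtLowBound} (Lemma~\ref{Lem:PosSqrtChar}) turns this into $q_0 \le \lfloor\sqrt{p}\rfloor$; together with $1 < q_0$ and $q_0 \mid p$ (from \texttt{PosProdToDiv}), \texttt{ExBPosIntro} from Lemma~\ref{Lem:ExBPosProp} yields $\operatorname{PosComposed}\,p$, again the desired contradiction.

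For the backward direction, assume $1 < p$ together with $\forall_{q_0,q_1}(q_0\cdot q_1 = p \to q_0 = 1 \orb q_1 = 1)$; the conjunct $p > 1$ of $\Prime{p}$ is given, so it suffices to derive a contradiction from $\operatorname{Composed}\,p$. In the natural-number case, \texttt{ExBNatToExNc} (or \texttt{ExBNatElim}) from Lemma~\ref{Lem:ExBNatProp} supplies some $m < p$ with $1 < m$ and $m \mid p$; by \texttt{NatDivToProdNc} (Lemma~\ref{Lem:NatProdToDiv}) there is $l$ with $l\cdot m = p$, and the hypothesis instantiated at $l, m$ forces $l = 1$ or $m = 1$ -- the first gives $m = p$, contradicting $m < p$, and the second contradicts $1 < m$. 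The positive case is identical in shape, using \texttt{ExBPosToExNc} and \texttt{PosDivToProd}: one gets $q \le \lfloor\sqrt{p}\rfloor$ with $1 < q$ and then $r$ with $r\cdot q = p$, the hypothesis gives $r = 1$ or $q = 1$, the latter contradicts $1 < q$, and the former forces $q = p$; but from \texttt{PosSquareSqrtUpBound} (Lemma~\ref{Lem:PosSqrtChar}) we have $\lfloor\sqrt{p}\rfloor\cdot\lfloor\sqrt{p}\rfloor \le p$, so $q = p \le \lfloor\sqrt{p}\rfloor$ would give $\lfloor\sqrt{p}\rfloor\cdot\lfloor\sqrt{p}\rfloor \le \lfloor\sqrt{p}\rfloor$, hence $\lfloor\sqrt{p}\rfloor = 1$ and $p \le 1$, contradicting $p > 1$.

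Almost all of this is routine bookkeeping with the bounded quantifiers of Lemmas~\ref{Lem:ExBNatProp} and~\ref{Lem:ExBPosProp} and with the product forms of divisibility. The one place that needs genuine care is the positive-number case of the forward direction, where a non-trivial factorisation must be shown to exhibit a divisor that actually lies at or below $\lfloor\sqrt{p}\rfloor$; this is precisely where the efficiency-motivated square-root-bounded definition of $\operatorname{PosComposed}$ is paid for, and it is also the step most exposed to off-by-one slips between $<$, $\le$ and the rounding direction of $\lfloor\sqrt{\,\cdot\,}\rfloor$, so having it formalised is worthwhile.
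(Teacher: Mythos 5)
Your proposal is correct and follows essentially the same route as the paper's proof: both directions are handled by the same case analysis, with the key point in each being the interaction between a nontrivial factorisation and the square-root bound in $\operatorname{PosComposed}$ (you derive $q_0\le\lfloor\sqrt{p}\rfloor$ from WLOG $q_0\le q_1$, the paper phrases it as ``$q_0 \leq \lfloor \sqrt{p} \rfloor$ or $q_1 \leq \lfloor \sqrt{p} \rfloor$''). The only cosmetic difference is in the backward direction, where the paper uses $\lfloor\sqrt{p}\rfloor<p$ directly while you rederive the same contradiction from $\lfloor\sqrt{p}\rfloor\cdot\lfloor\sqrt{p}\rfloor\le p$.
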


\begin{proof}
First, assume \(1 < p\) and 
\[
\forall_{q_0, q_1} \bigl(q_0 \cdot q_1 = p \to q_0 = 1 \;\orb \; q_1 = 1\bigr).
\]
Our goal is to show \(\Prime{p}\). Since \(1 < p\) is given, it suffices to show that \(p\) is not composed.

Hence, we assume that there exists some \(q \leq \lfloor \sqrt{p} \rfloor\) with \(1 < q\) and \(q \mid p\).  
From \(q \mid p\), we obtain an \(r\) such that
\[
r \cdot q = p.
\]
By our assumption, either \(q = 1\) or \(r = 1\). 
If \(q = 1\), there follows directly a contradiction since \(q > 1\).
If \(r = 1\), then \(q = p\). However, since \(1 < p\), we have
    \[
    q \leq \lfloor \sqrt{p} \rfloor < p,
    \]
which contradicts \(q = p\).

Conversely, assume \(\Prime{p}\).  
By definition, \(1 < p\) holds.  
Let \(q_0, q_1\) be such that:
\[
q_0 \cdot q_1 = p.
\]
As the disjunction \(q_0 = 1 \orb q_1 = 1\) is decidable, we assume \(q_0 > 1\) and \(q_1 > 1\) and show a contradiction.  
From \(q_0, q_1 > 1\), it follows
$q_0, q_1 < p$, and from $q_0\cdot q_1=p$ we get $q_0 \leq \lfloor \sqrt{p} \rfloor$ or $q_1 \leq \lfloor \sqrt{p} \rfloor$ .
Without loss of generality, we assume
\[
q_0 \leq \lfloor \sqrt{p} \rfloor.
\]
Since \(q_0 \cdot q_1 = p\), it follows that \(q_0 \mid p\), and therefore $\exbp{\lfloor \sqrt{p} \rfloor}_q \bigl(1 < q \;\andb\; q \mid p\bigr)$.
This together with $\Prime{p}$ leads to a contradiction.
\end{proof}

Note that the statement in this lemma does not carry any computational content, as it concludes with a Boolean disjunction. However, computational content is not required in this case, since the formulas \(q_0 = 1\) and \(q_1 = 1\) can be verified directly.
\subsection{Smallest Divisor}
In this section, we present the simplest method for finding a proper divisor of a natural number or a positive binary number: a bounded search from below using the \(\mu\)-operator. 
It is important to note that this approach becomes highly inefficient, which is typical for factorisation algorithms. Nevertheless, in the following sections -- particularly when we aim to prove the fundamental theorem of arithmetic -- we will require the fact that every number is divisible by a prime.
\begin{definition}[\Href{https://github.com/FranziskusWiesnet/MinlogArith/blob/1c63c302aa52e8972b594a3ef8753fa11a0435eb/minlog/examples/arith/prime_nat.scm\#L143}{NatLeastFactor}, \Href{https://github.com/FranziskusWiesnet/MinlogArith/blob/1c63c302aa52e8972b594a3ef8753fa11a0435eb/minlog/examples/arith/prime_pos.scm\#L566}{PosLeastFactor}]
\label{Def:LeastFactor}
For a natural number $n$ we define the least factor (greater than 1) of $n$ by
\begin{align*}
\lf n := \mu_{i<n}\left(1<i\ \andb\ \exbn{\Suc n}_j j\cdot i=n\right)
\end{align*}
For efficiency reasons the definition of $\lf p$ is different. We first define $\lf_{aux} p$ by
\begin{align*}
\lf_{aux} p:=\mu_{q<\lfloor \sqrt{p} \rfloor + 1}\left(1< q\ \andb\ \gcd(q,p)=q \right),
\end{align*}
then $\lf p$ is given by
\begin{align*}
\lf p := \begin{cases} 
 1 & \text{ if }  p = 1, \\
 2 & \text{ if }  p = \SZero p', \\
 \lf_{aux} p & \text{ if }   p = \SOne p'  \text{ and } \lf_{aux} p \leq \lfloor \sqrt{p} \rfloor, \\
 p & \text{ otherwise.}
 \end{cases}
\end{align*}
\end{definition}

Note that both definitions ultimately produce the same result, as shown by the theorem \href{https://github.com/FranziskusWiesnet/MinlogArith/blob/1c63c302aa52e8972b594a3ef8753fa11a0435eb/minlog/examples/arith/prime_pos.scm#L616}{\texttt{PosToNatLeastFactor}} in Minlog.
However, der definition for binary numbers is more complex in order to minimise runtime. First, the last digit is inspected to determine whether the smallest divisor is 2. In that case, 2 is returned immediately. Otherwise, the function $\lf_{aux}$ searches upward from below for the smallest divisor. Here, the search is bounded by the square root, and divisibility is checked using the $\gcd$ algorithm (as divisibility is defined for binary numbers).
Although the similar improvements could be done for natural numbers, in our setting we prioritise simplicity of definitions and proofs. For positive numbers, by contrast, the goal is to minimise the runtime of the extracted term.

Furthermore, note that the \(\mu\)-operator is defined only for natural numbers. As a result, we implicitly rely on the transformation between positive and natural numbers. In the Minlog implementation, this transformation must be made explicit using \(\operatorname{PosToNat}\) and \(\operatorname{NatToPos}\).
\begin{lemma} \label{Lem:LeastFactorProp}
 For $p>1$ the following properties about $\lf$ hold: 
\begin{equation*}
\begin{array}{ll}
\href{https://github.com/FranziskusWiesnet/MinlogArith/blob/1c63c302aa52e8972b594a3ef8753fa11a0435eb/minlog/examples/arith/prime_nat.scm#L174}{\mathtt{LeastFactorProp0}},
    \ \href{https://github.com/FranziskusWiesnet/MinlogArith/blob/1c63c302aa52e8972b594a3ef8753fa11a0435eb/minlog/examples/arith/prime_pos.scm#L960}{\mathtt{PosOneLtLeastFactor}}: & 1< \lf p\\
    \href{https://github.com/FranziskusWiesnet/MinlogArith/blob/1c63c302aa52e8972b594a3ef8753fa11a0435eb/minlog/examples/arith/prime_nat.scm#L182}{\mathtt{LeastFactorProp1}},\ \href{https://github.com/FranziskusWiesnet/MinlogArith/blob/1c63c302aa52e8972b594a3ef8753fa11a0435eb/minlog/examples/arith/prime_pos.scm#L969}{\mathtt{PosLeastFactorDiv}}: & (\lf p) \mid p
    
\end{array}
\end{equation*}
\end{lemma}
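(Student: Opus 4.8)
The plan is to handle the two representations separately, since the defining recursions of $\lf$ on $\NN$ and on $\PP$ are quite different. In both settings the argument rests on the fact, recorded in the $\mu$-operator lemmas of Lemma~\ref{Lem:LeastNatProp}, that a bounded search either returns a value at which the search predicate genuinely holds, or returns the search bound; and in the present situation the bound (respectively the argument itself) is already an admissible answer.

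For natural numbers I would argue directly from \texttt{PropNatLeast} in Lemma~\ref{Lem:LeastNatProp}. Abbreviating the search predicate by $ws\ i := (1<i)\andb(\exbn{\Suc n}_j(j\cdot i=n))$, we have $\lf n=\mu_{i<n}(ws\ i)$. The key observation is that $ws\ n$ holds whenever $n>1$: the conjunct $1<n$ is the hypothesis, and from $1<\Suc n$ together with $1\cdot n=n$ we get $\exbn{\Suc n}_j(j\cdot n=n)$ by \texttt{ExBNatIntro} of Lemma~\ref{Lem:ExBNatProp}. Instantiating \texttt{PropNatLeast} with $m:=n$ (so that $m\le n$ is trivial) then yields $ws(\lf n)$. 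Reading off the two conjuncts gives $1<\lf n$, which is \texttt{LeastFactorProp0}, and $\exbn{\Suc n}_j(j\cdot\lf n=n)$, which is by the definition of \texttt{NatDiv} exactly $(\lf n)\mid n$, i.e.\ \texttt{LeastFactorProp1}.

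For positive binary numbers I would do a case distinction following Definition~\ref{Def:LeastFactor}. The branch $p=1$ is ruled out by $p>1$. If $p=\SZero p'$ then $\lf p=2$; here $1<2$ is immediate and $p'\cdot 2=\SZero p'$ gives $2\mid\SZero p'$ by \texttt{PosProdToDiv}. If $p=\SOne p'$ and $\lf_{aux}p\le\lfloor\sqrt p\rfloor$, then $\lf p=\lf_{aux}p<\lfloor\sqrt p\rfloor+1$, so \texttt{NatLeastUpLtElim} (together with \texttt{NatLeastUpZero}) applies and shows that the predicate $1<q\andb\gcd(q,p)=q$ holds at $q:=\lf_{aux}p$; the first conjunct gives $1<\lf p$ and the second is precisely $(\lf p)\mid p$ by the definition of \texttt{PosDiv}. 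In the remaining \emph{otherwise} branch $\lf p=p$, so $1<p$ is the hypothesis and $p\mid p$ follows from \texttt{PosDivRefl} of Lemma~\ref{Lem:DivPreOrd}.

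I do not expect a conceptual obstacle here; the only content sits in the $\SOne p'$ branch, and it is dispatched by the already-established $\mu$-operator lemmas. The fiddly points are bookkeeping: in the formalisation the $\mu$-operator and hence $\lf_{aux}$ live on $\NN$, so the coercions $\PosToNat$ and $\NatToPos$ must be threaded through the $\SOne p'$ branch, and one must ensure that the Boolean guard $\lf_{aux}p\le\lfloor\sqrt p\rfloor$ used in the case split is matched exactly against the guard appearing in the defining recursion of $\lf p$. Since both statements are non-computational (a strict inequality and a Leibniz equality), there is no computational content to track, and I would keep the paper-level proof brief, deferring the coercion details to the Minlog development.
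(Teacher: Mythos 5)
Your proposal is correct and follows exactly the route the paper intends: the paper's own proof is just the one-line remark that the lemma ``follows from the definition above and Lemma~\ref{Lem:LeastNatProp}'', i.e.\ from the $\mu$-operator properties, which is precisely what you spell out (via \texttt{PropNatLeast} with $m:=n$ for the unary case and \texttt{NatLeastUpLtElim} plus the case split of Definition~\ref{Def:LeastFactor} for the binary case). No gaps; your version simply supplies the details the paper defers to the Minlog code.
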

\begin{proof}
This follows from the definition above and Lemma \ref{Lem:LeastNatProp}. For details, we refer to the Minlog code.
\end{proof}

The key property of the least divisor is that it is a prime number. Note that we do not formally prove that \(\lf p\) is indeed the smallest divisor greater than 1, as this will not be required in the subsequent theorems. However, this fact is implicitly shown and used in the proof of the following lemma.
\begin{lemma}[\Href{https://github.com/FranziskusWiesnet/MinlogArith/blob/1c63c302aa52e8972b594a3ef8753fa11a0435eb/minlog/examples/arith/prime_nat.scm\#L216}{NatLeastFactorPrime}, \Href{https://github.com/FranziskusWiesnet/MinlogArith/blob/1c63c302aa52e8972b594a3ef8753fa11a0435eb/minlog/examples/arith/prime_pos.scm\#L942}{PosLeastFactorPrime}]
\label{Lem:LeastFactorPrime}
\begin{align*}
\forall_{p>1}\,\Prime{\lf p}
\end{align*}
\end{lemma}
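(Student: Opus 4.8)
The plan is to derive primality of $\lf p$ from the product characterisation in Lemma~\ref{Lem:PrimeChar}. By Lemma~\ref{Lem:LeastFactorProp} we already have $1<\lf p$ and $(\lf p)\mid p$, so it only remains to show that $q_0\cdot q_1=\lf p$ implies $q_0=1$ or $q_1=1$. Since this conclusion is a decidable Boolean disjunction, it suffices to assume $q_0>1$ and $q_1>1$ and obtain a contradiction.

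First I would reduce the general situation to a statement about divisors of $\lf p$. From $q_0\cdot q_1=\lf p$ we get $q_0\mid \lf p$ (by \texttt{NatProdToDiv}, resp.\ \texttt{PosProdToDiv}), and together with $(\lf p)\mid p$ and transitivity of divisibility (Lemma~\ref{Lem:DivPreOrd}) this yields $q_0\mid p$. Moreover $q_1>1$ forces $\lf p=q_0\cdot q_1\ge 2q_0>q_0$. Hence $q_0$ is a divisor of $p$ with $1<q_0<\lf p$, and the contradiction will come from the minimality built into the $\mu$-operator in Definition~\ref{Def:LeastFactor}, via \texttt{NatLeastUpLeIntro} from Lemma~\ref{Lem:LeastNatProp}.

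For natural numbers this is immediate: $\lf n=\mu_{i<n}(1<i\ \andb\ \exbn{\Suc n}_j j\cdot i=n)$, and the inner Boolean at $i=q_0$ is exactly $1<q_0\ \andb\ (q_0\mid n)$, which holds; hence $\lf n\le q_0$, contradicting $q_0<\lf n$. For positive binary numbers I would descend into the three branches of the definition of $\lf p$. If $p$ is even, then $\lf p=2$, and $q_0\cdot q_1=2$ with $q_0,q_1>1$ is impossible at once. If $p=\SOne p'$ and $\lf_{aux}p\le\lfloor\sqrt p\rfloor$, then $\lf p=\lf_{aux}p=\mu_{q<\lfloor\sqrt p\rfloor+1}(1<q\ \andb\ \gcd(q,p)=q)$; since $q_0<\lf_{aux}p\le\lfloor\sqrt p\rfloor<\lfloor\sqrt p\rfloor+1$, the value $q_0$ lies inside the search range, $q_0\mid p$ means $\gcd(q_0,p)=q_0$, and minimality again gives $\lf_{aux}p\le q_0$, a contradiction. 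In the remaining branch $\lf p=p$, so $q_0\cdot q_1=p$ with $q_0,q_1>1$; taking without loss of generality $q_0\le q_1$ and using \texttt{PosSquareSqrtLowBound} from Lemma~\ref{Lem:PosSqrtChar} (if $q_0>\lfloor\sqrt p\rfloor$, then $p<q_0\cdot q_0\le q_0\cdot q_1=p$), we obtain $q_0\le\lfloor\sqrt p\rfloor$, hence $\lf_{aux}p\le q_0\le\lfloor\sqrt p\rfloor$, contradicting the hypothesis $\lf_{aux}p>\lfloor\sqrt p\rfloor$ that defines this branch.

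The main obstacle is the positive-number case, where one must match each of the three branches of the definition of $\lf p$ with the appropriate instance of the $\mu$-minimality lemma and, in the ``otherwise'' branch, invoke the square-root bound to place one factor below $\lfloor\sqrt p\rfloor$. A further routine but tedious point is the implicit conversion between positive binary numbers and their images under $\PosToNat$, since the $\mu$-operator lives on $\NN$; in the formalisation this bookkeeping is mediated by \texttt{PosToNatLeastFactor}.
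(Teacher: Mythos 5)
Your proposal is correct and follows essentially the same route as the paper's proof: both reduce to the product characterisation of primality, dispose of the even case and the $\lf p = p$ case via the square-root bound, and in the remaining branch derive the conclusion from the minimality of the $\mu$-operator (\texttt{NatLeastUpLeIntro}) applied to a factor of $\lf p$, which also divides $p$ by transitivity. The only cosmetic difference is that you assume both factors exceed $1$ and contradict minimality via the smaller factor, whereas the paper takes the larger factor $r>1$ and shows directly that $\lf p \le r \le \lf p$, forcing the other factor to be $1$.
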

\begin{proof}
Let $p > 1$ be given. If $p=\SZero p'$, $\lf p=2$ which is obviously prime. Therefore, let $p=\SOne p'$ and assume $q \cdot r = \lf p$. If $\lf p = p$ then, because of  \texttt{NatLeastUpLeIntro} from Lemma~\ref{Lem:LeastNatProp}, there is no $q > 1$ and $q<p$ such that $q\mid p$. Hence $p = \lf p$ is prime.

Therefore, we may assume that $\lf p \neq p$, and thus $(\lf p) \leq \lfloor\sqrt{p}\rfloor$ by the definition of $\lf p$.

Let $\lf p = q \cdot r$ for some $q$ and $r$. Our goal is to show that $q = 1$ or $r = 1$.
By Lemma \ref{Lem:LeastFactorProp}, we have $(\lf p) \mid p$, so there exists some $p_0$ such that $p_0 \cdot \lf p = p$, and also $1 < \lf p=q\cdot r$. It follows that $p_0 \cdot q \cdot r = p$, and either $q > 1$ or $r > 1$.

Without loss of generality, assume $r > 1$. Then $r$ satisfies $1 < r$ and $r\mid p$. By the statement \texttt{NatLeastUpLeIntro} from Lemma \ref{Lem:LeastNatProp}, we have $\lf p \leq r$. Since $\lf p = q \cdot r$ and hence $r\leq \lf p$, it follows that $r = \lf p$, and thus $q = 1$.
\end{proof}

\subsection{Infinitude of Prime Numbers}
In this section, we prove the infinitude of prime numbers by following the classical approach of Euclid -- the most well-known proof of this statement.

While we base our formalisation on Euclid’s argument, it is worth noting that numerous alternative proofs exist. For example, \cite[Chapter 1]{aigner2010das} presents six distinct proofs of the infinitude of primes. Analysing the computational content of these proofs and possibly applying formal program extraction techniques to them could be an interesting direction for future work.
Notably, Ulrich Kohlenbach has examined various proof strategies for the infinitude of primes from a quantitative perspective \cite[Proposition 2.1]{kohlenbach2008applied}. His approach provides valuable insight into how different logical approaches can yield computational information. 

In contrast to the standard methodology in proof mining -- where existing proofs are analysed to extract their quantitative content -- we take a slightly different approach in this article. We do not begin with a completed textbook proof but rather formulate both the theorem and its proof in a way that is tailored to the extraction of a desired computational term.
For our purposes, we refine the classical statement as follows: for any finite set of prime numbers, there exists a new prime number. We define what it means for a number to be ``new'' using a program constant, which will be crucial for the subsequent formal program extraction.

\begin{definition}[\Href{https://github.com/FranziskusWiesnet/MinlogArith/blob/1c63c302aa52e8972b594a3ef8753fa11a0435eb/minlog/examples/arith/prime_nat.scm\#L419}{NatNewNumber}, \Href{https://github.com/FranziskusWiesnet/MinlogArith/blob/1c63c302aa52e8972b594a3ef8753fa11a0435eb/minlog/examples/arith/prime_pos.scm\#L1168}{PosNewNumber}]
Let $ps$ be a sequence of positive or natural numbers and let $p$ be a positive or natural number. We define the notion $p\notin \{ps(i)\mid i < n\}$ by the rules
\begin{align*}
\forall_{p,ps}\bigl(&p\notin \{ps(i)\mid i < 0\}\bigr) \\
\forall_{p,n,ps}\bigl(p\notin \{ps(i)\mid i < n\} \to  p \neq ps(n) \to& p\notin \{ps(i)\mid i < \Suc n\}\bigr)
\end{align*}
\end{definition}
Note that the use of set notation here is merely a notation. Formally, it is an inductively defined predicate that takes $p$, $ps$ and $n$ as arguments. Sets themselves do not appear as objects in our consideration.

\begin{lemma}[\Href{https://github.com/FranziskusWiesnet/MinlogArith/blob/1c63c302aa52e8972b594a3ef8753fa11a0435eb/minlog/examples/arith/prime_nat.scm\#L497}{NatNotDivProdToNewNumber}, \Href{https://github.com/FranziskusWiesnet/MinlogArith/blob/1c63c302aa52e8972b594a3ef8753fa11a0435eb/minlog/examples/arith/prime_pos.scm\#L1248}{PosNotDivProdToNewNumber}]
\label{Lem:NatDivProdToNewNumber}
\begin{align*}
\forall_{p,ps,n}\left(\neg \left(p\Bigm| \prod_{i<n}ps(i)\right) \to p\notin \{ps(i) \mid i<n \}\right).
\end{align*}
\end{lemma}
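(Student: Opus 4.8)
The plan is to prove the statement by induction on $n$, treating both the natural-number version (\texttt{NatNotDivProdToNewNumber}) and the positive-number version (\texttt{PosNotDivProdToNewNumber}) in parallel, since the argument only uses the generic divisibility lemmas of Lemma~\ref{Lem:DivProp} and the recursion equations for the finite product $\prod_{i<n}ps(i)$. For $n=0$ there is nothing to prove: $p\notin\{ps(i)\mid i<0\}$ holds by the first clause of the inductively defined predicate, and the hypothesis is not even needed.

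For the induction step, I would assume the statement for $n$ and suppose $\neg\bigl(p\mid\prod_{i<\Suc n}ps(i)\bigr)$. Unfolding the product gives $\prod_{i<\Suc n}ps(i)=\bigl(\prod_{i<n}ps(i)\bigr)\cdot ps(n)$, so by the second clause of the definition it suffices to establish the two premises $p\notin\{ps(i)\mid i<n\}$ and $p\neq ps(n)$. The first I would obtain by contraposition together with the induction hypothesis: if $p\mid\prod_{i<n}ps(i)$ then $p$ divides $\bigl(\prod_{i<n}ps(i)\bigr)\cdot ps(n)=\prod_{i<\Suc n}ps(i)$ by \texttt{NatDivTimes} (resp.\ \texttt{PosDivTimes}), contradicting the hypothesis; hence $\neg\bigl(p\mid\prod_{i<n}ps(i)\bigr)$, and the induction hypothesis yields $p\notin\{ps(i)\mid i<n\}$. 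For the second premise, assuming $p=ps(n)$ gives $p\mid ps(n)$ by reflexivity of divisibility (\texttt{NatDivRefl}, resp.\ \texttt{PosDivRefl}), and then once more $p\mid\bigl(\prod_{i<n}ps(i)\bigr)\cdot ps(n)=\prod_{i<\Suc n}ps(i)$ by \texttt{NatDivTimes} (resp.\ \texttt{PosDivTimes}), again a contradiction; so $p\neq ps(n)$. Feeding both premises into the second clause of the definition yields $p\notin\{ps(i)\mid i<\Suc n\}$, completing the induction.

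I do not expect a serious obstacle here. The only point requiring attention is matching the orientation of the product's recursion equation (whether the new factor $ps(n)$ appears on the left or on the right) to the shape $p\mid q_0\to p\mid q\cdot q_0$ of the divisibility lemma, which may call for an extra commutativity-of-multiplication step. Since divisibility and the $\notin$-predicate are both non-computational, the statement carries no computational content, so no attention to the extracted term is needed here.
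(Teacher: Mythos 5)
Your proof is correct and follows the same route as the paper, which simply states that the result "follows quite directly by induction on $n$" and defers the details to the Minlog implementation. Your filled-in induction step --- contraposing via \texttt{NatDivTimes}/\texttt{PosDivTimes} for the first premise and using reflexivity of divisibility for the second --- is exactly the expected elaboration, including the minor commutativity adjustment you flag.
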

\begin{proof}
For given $p$ and $ps$, this follows quite directly by induction on $n$. For details, we refer to the Minlog file. \end{proof}

\begin{theorem}[\Href{https://github.com/FranziskusWiesnet/MinlogArith/blob/1c63c302aa52e8972b594a3ef8753fa11a0435eb/minlog/examples/arith/prime_nat.scm\#L595}{NatListToNewPrime}, \Href{https://github.com/FranziskusWiesnet/MinlogArith/blob/1c63c302aa52e8972b594a3ef8753fa11a0435eb/minlog/examples/arith/prime_pos.scm\#L1316}{PosListToNewPrime}]
\label{Thm:PrimesToNewPrimes}
\begin{align*}
\forall_{ps,n}\exl_p\left(\Prime{p} \andnc p \notin \{ps(i) \mid i<n\}\right).
\end{align*}
\end{theorem}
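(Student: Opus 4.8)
The plan is to follow Euclid's classical argument, adapted so that the witnessing prime is produced by the least-factor operator $\lf$ from Definition~\ref{Def:LeastFactor}. Given $ps$ and $n$, I would set
\[
N := \Bigl(\prod_{i<n}ps(i)\Bigr) + 1
\]
and take $p := \lf N$ as the existential witness. Since a product of positive binary numbers is again a positive binary number, hence $\geq 1$, we have $N > 1$, so Lemma~\ref{Lem:LeastFactorPrime} gives $\Prime{\lf N}$, which settles the left conjunct $\Prime{p}$. Lemma~\ref{Lem:LeastFactorProp} additionally supplies both $1 < \lf N$ and $(\lf N)\mid N$, which will be the only facts about $\lf N$ needed below.

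For the right conjunct $\lf N \notin \{ps(i)\mid i<n\}$ I would invoke Lemma~\ref{Lem:NatDivProdToNewNumber}, which reduces the goal to showing
\[
\neg\Bigl(\lf N \Bigm| \prod_{i<n}ps(i)\Bigr).
\]
Assume for contradiction that $\lf N \mid \prod_{i<n}ps(i)$. Together with $(\lf N)\mid N$ and $N = \prod_{i<n}ps(i) + 1$, an application of $\mathtt{PosDivPlusRev}$ (or its natural-number analogue $\mathtt{NatDivPlusRev}$) from Lemma~\ref{Lem:DivProp}, with $q_0 := \prod_{i<n}ps(i)$ and $q_1 := 1$, yields $\lf N \mid 1$. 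By the definition of divisibility on positive numbers and the computation rules for $\gcd$, which give $\gcd(p,1)=1$ for every $p$, this forces $\lf N = 1$, contradicting $1 < \lf N$. Hence $\neg(\lf N \mid \prod_{i<n}ps(i))$, and Lemma~\ref{Lem:NatDivProdToNewNumber} concludes the proof.

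The computational content of this proof is the function $(ps,n)\mapsto \lf\bigl(\prod_{i<n}ps(i)+1\bigr)$: it multiplies the first $n$ entries of $ps$, adds one, and then performs a bounded search for the least nontrivial factor. The verification part is routine once the divisibility lemmas are available; the genuine point — and the reason the statement is phrased with an explicit $\lf$ rather than an abstract ``new prime'' — is that the naive Euclid number $\prod_{i<n}ps(i)+1$ need not itself be prime, so one really needs the least-factor construction together with its primality (Lemma~\ref{Lem:LeastFactorPrime}) to obtain a witness. A secondary point of care, specific to the positive-binary setting, is that one must verify $N>1$ by hand, since there is no $0$ and the empty product equals $1$ (giving $N=2$ for $n=0$), rather than relying on a strict inequality coming for free.
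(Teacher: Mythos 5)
Your proposal is correct and follows essentially the same route as the paper: form the Euclid number $N=\prod_{i<n}ps(i)+1$, take $p:=\lf N$, use Lemmas~\ref{Lem:LeastFactorProp} and~\ref{Lem:LeastFactorPrime} for $1<\lf N$, $(\lf N)\mid N$ and primality, derive $\lf N\mid 1$ via $\mathtt{PosDivPlusRev}$ for the contradiction, and conclude with Lemma~\ref{Lem:NatDivProdToNewNumber}. Your description of the extracted term also matches the paper's.
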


\begin{proof}
Let $ps$ and $n$ be given. We consider
\[
N := \prod_{i<n}ps(i) + 1.
\]  
Since $N \geq 1+1 >1$, by Lemma \ref{Lem:LeastFactorProp} we obtain
\[
(\lf N) \mid N \quad \text{and} \quad 1 < \lf N.
\]
By Lemma \ref{Lem:LeastFactorPrime}, $\lf N$ is prime.  
We set 
\[
p := \lf N.
\]  
Then, clearly
\[
\neg \left(p\Bigm| \prod_{i<n}ps(i)\right),
\]
as otherwise we would obtain $p\mid 1$ by $\mathtt{PosDivPlusRev}$ from Lemma \ref{Lem:DivProp}, which is impossible.
Therefore, Lemma \ref{Lem:NatDivProdToNewNumber} completes the proof.
\end{proof}

Note that we have not assumed that $ps(0),\dots,ps(n-1)$ only consist of prime numbers. Therefore, this theorem is actually a generalisation of Euclid's lemma. However, for the proof on natural numbers we assumed, for simplicity, that every list entry is positive. This is not a genuine restriction as one can just omit those entries of the original sequence that are equal to $0$.

The computational content of this theorem is so compact this time that we can even express it in Minlog notation:
\begin{verbatim}
[m,ps]PosLeastFactor(PosProd Zero m ps+1)
\end{verbatim}
As expected, the extracted term takes as input a natural number $m$ and a sequence $ps$ of positive binary numbers, and returns the smallest proper factor of $\prod_{i<m}ps(i) +1$.
\subsection{Euclid's Lemma}
In the next section, we will prove the fundamental theorem of arithmetic. A key lemma in this context is that prime numbers are also irreducible -- a result commonly known in the literature as Euclid’s Lemma. As this is a well-known and frequently used statement, we dedicate a separate section to this result.

\begin{lemma}[\Href{https://github.com/FranziskusWiesnet/MinlogArith/blob/1c63c302aa52e8972b594a3ef8753fa11a0435eb/minlog/examples/arith/prime_nat.scm\#L647}{NatPrimeToIrred}, \Href{https://github.com/FranziskusWiesnet/MinlogArith/blob/1c63c302aa52e8972b594a3ef8753fa11a0435eb/minlog/examples/arith/prime_pos.scm\#L1345}{PosPrimeToIrred}]
\label{Lem:PrimeToIrred}
\begin{align*}
\forall_{p,q_0,q_1}\left(\Prime{p} \to p\mid q_0\cdot q_1 \to p\mid q_0 \orb p\mid q_1\right).
\end{align*}
\end{lemma}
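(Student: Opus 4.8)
The plan is to derive Euclid's lemma from Bézout's identity (Theorem~\ref{Thm:PosGcdToLinComb}) in the usual way, together with the fact that a prime has only the trivial divisors. I present the argument for positive binary numbers; the version for natural numbers is analogous, with the degenerate cases $q_0 = 0$ or $q_1 = 0$ handled trivially. So let $p$, $q_0$, $q_1$ with $\Prime{p}$ and $p \mid q_0 \cdot q_1$ be given. Since the conclusion $p \mid q_0 \orb p \mid q_1$ is decidable, I first do a case distinction and assume $\neg(p \mid q_0)$, aiming to prove $p \mid q_1$. The first step is to show $\gcd(p, q_0) = 1$: by Lemma~\ref{Lem:GcdChar} we have $\gcd(p, q_0) \mid p$, hence by \texttt{PosDivToProd} there is some $r$ with $r \cdot \gcd(p, q_0) = p$, and the product characterisation of primality (Lemma~\ref{Lem:PrimeChar}) gives $r = 1$ or $\gcd(p, q_0) = 1$. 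The case $r = 1$ would force $\gcd(p, q_0) = p$, and since $\gcd(p, q_0) \mid q_0$ (again Lemma~\ref{Lem:GcdChar}) this would contradict $\neg(p \mid q_0)$; hence $\gcd(p, q_0) = 1$.

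Next I apply Theorem~\ref{Thm:PosGcdToLinComb} to $p$ and $q_0$ and treat its four disjuncts. If $q \cdot p = q_0$ for some $q$, then $p \mid q_0$ by \texttt{PosProdToDiv}, contradicting the assumption. If $q \cdot q_0 = p$ for some $q$, then Lemma~\ref{Lem:PrimeChar} gives $q = 1$ or $q_0 = 1$: the former forces $q_0 = p$ and hence $p \mid q_0$, again a contradiction; the latter makes $q_0 \cdot q_1 = q_1$, so $p \mid q_0 \cdot q_1$ already reads $p \mid q_1$. In the remaining two cases, substituting $\gcd(p, q_0) = 1$ yields an equation of the form $1 + q_0' \cdot p = q_1' \cdot q_0$ or $1 + q_1' \cdot q_0 = q_0' \cdot p$; multiplying through by $q_1$ turns these into $q_1 + q_0' \cdot p \cdot q_1 = q_1' \cdot (q_0 \cdot q_1)$ and $q_1 + q_1' \cdot (q_0 \cdot q_1) = q_0' \cdot p \cdot q_1$ respectively. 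In each, $p$ divides $q_0' \cdot p \cdot q_1$ (it divides $p$ by reflexivity, then apply \texttt{PosDivTimes} from Lemma~\ref{Lem:DivProp}) and $p$ divides the summand containing $q_0 \cdot q_1$ (from $p \mid q_0 \cdot q_1$ and \texttt{PosDivTimes}); so $p$ divides both a term of the shape $q_1 + t$ and $t$ itself, and \texttt{PosDivPlusRev} from Lemma~\ref{Lem:DivProp} concludes $p \mid q_1$.

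The argument is essentially routine; no single step is hard. The points that need care are deducing $\gcd(p, q_0) = 1$ from primality (rather than simply invoking it as in textbook treatments), and, in the two genuine Bézout cases, correctly tracking which side of the linear-combination equation carries the extra $\gcd$-summand after multiplying by $q_1$ so that \texttt{PosDivPlusRev} applies in the right direction. The four-way case split of Theorem~\ref{Thm:PosGcdToLinComb}, inherited from the avoidance of negative numbers, makes the formal proof noticeably longer than the classical one-line Bézout argument, and is the main source of bookkeeping here.
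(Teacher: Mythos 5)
Your proof is correct and takes essentially the same route as the paper: both derive the result from Bézout's identity (Theorem~\ref{Thm:PosGcdToLinComb}), work through its four disjuncts, and in the two genuine linear-combination cases multiply by $q_1$ and conclude with \texttt{PosDivPlusRev}. The only (cosmetic) difference is that you assume $\neg(p\mid q_0)$ up front and establish $\gcd(p,q_0)=1$ once, whereas the paper keeps the dichotomy $\gcd(p,q_0)\in\{1,p\}$ inside Cases 3 and 4, where the branch $\gcd(p,q_0)=p$ immediately gives $p\mid q_0$.
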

\begin{proof}
From $p\mid q_0\cdot q_1$, there exists some $p_0$ such that  
\[
p\cdot p_0 = q_0\cdot q_1.
\]
By applying Theorem \ref{Thm:PosGcdToLinComb} to $p$ and $q_0$, we obtain four possible cases:
\begin{align*}
\exl_q &\ q\cdot p = q_0,  \\
\exl_q &\ q\cdot q_0 = p, \\ 
\exd_{r_0} \exl_{r_1} &\ \gcd(p,q_0) + r_0\cdot p = r_1\cdot q_0,  \\
\exd_{r_0} \exl_{r_1} &\ \gcd(p,q_0) + r_1\cdot q_0 = r_0\cdot p.
\end{align*}

\noindent \subparagraph{Case 1} If $q\cdot p = q_0$, then $p\mid q_0$ follows directly.

\noindent \subparagraph{Case 2}  If $q\cdot q_0 = p$, then, since $p$ is prime, either $q=1$ or $q_0=1$. Thus, we conclude that $p\mid q_0$ or $p\mid q_1$.

\noindent \subparagraph{Case 3}  Suppose there exist $r_0, r_1$ such that  
  \[
  \gcd(p,q_0) + r_0\cdot p = r_1\cdot q_0.
  \]
  Since $p$ is prime and $\gcd(p,q_0) \mid p$, we have either $\gcd(p,q_0) = 1$ or $\gcd(p,q_0) = p$.  
If $\gcd(p,q_0) = p$, then $p\mid q_0$ follows immediately.
If $\gcd(p,q_0) = 1$, then we obtain  
    \[
    1 + r_0\cdot p = r_1\cdot q_0.
    \]
    We aim to show that $p\mid q_1$.  
    Using \texttt{PosDivPlusRev} from Lemma \ref{Lem:DivProp}, we consider  
    \[
    q_1\cdot r_0\cdot p + q_1 = q_1(1 + r_0\cdot p) = q_1\cdot r_1\cdot q_0.
    \]
    Since $p \mid q_0\cdot q_1$ and $p\mid q_1\cdot r_0\cdot p$, the result follows.
    
\subparagraph{Case 4} Again, if $\gcd(p,q_0) = p$, then $p\mid q_0$. Otherwise, we have $\gcd(p,q_0) = 1$, so we obtain  
  \[
  1 + r_0\cdot q_0 = r_0\cdot p.
  \]
  We aim to show that $p\mid q_1$ by using $\mathtt{PosDivPlusRev}$ from Lemma \ref{Lem:DivProp}.  
  As obviously $p\mid r_1\cdot p_0\cdot p$, it suffices to prove  
  \[
  p\mid r_1\cdot p_0\cdot p + q_1.
  \]
  Since $p_0\cdot p = q_0\cdot q_1$, we obtain  
  \[
  r_1\cdot p_0\cdot p + q_1 = r_1\cdot q_0\cdot q_1 + q_1 = q_1(1 + r_1\cdot q_0) = q_1\cdot r_0\cdot p.
  \]
  Thus, $p\mid r_1\cdot p_0\cdot p + q_1$, completing the proof.
\qed\end{proof}

Since the conclusion of the lemma is formulated as a Boolean disjunction, the statement carries no computational content. If, however, one were to replace the Boolean disjunction by an inductively defined (computational) disjunction, program extraction would yield a term which, given $p, q_0, q_1$, decides whether $p$ divides $q_0$ or $p$ divides $q_1$. 
For positive binary numbers, the predicates $p \mid q_0$ and $p \mid q_1$ are decidable and can be checked very efficiently using Stein's algorithm. The extracted term, however, would depend on an extension of this algorithm, namely the term extracted from Theorem~\ref{Thm:PosGcdToLinComb}. As discussed after the proof of that theorem, this extension is not quite as efficient. 
For natural numbers, by contrast, divisibility is not computable as efficiently as it is for positive binary numbers. Therefore, \texttt{NatPrimeToIrred} indeed makes use of the inductively defined disjunction.

In Euclid’s Lemma, the premise is that a prime divides the product of two numbers. However, for the proof of uniqueness in the fundamental theorem of arithmetic, we require this statement for products of arbitrary length. 
Therefore, we proceed to prove a more general version:
\begin{theorem}[\Href{https://github.com/FranziskusWiesnet/MinlogArith/blob/1c63c302aa52e8972b594a3ef8753fa11a0435eb/minlog/examples/arith/prime_nat.scm\#L820}{NatPrimeDivProdPrimesToInPrimes},  \Href{https://github.com/FranziskusWiesnet/MinlogArith/blob/1c63c302aa52e8972b594a3ef8753fa11a0435eb/minlog/examples/arith/prime_pos.scm\#L1529}{PosPrimeDivProdPrimesToInPrimes}]
\label{Thm:PrimeDivProdPrimesToInPrimes}
\begin{align*}
\forall_{ps,p,n}\left( \Prime{p} \to \Primes{ps}{n} \to p\Bigm| \prod_{i<n}ps(i) \to \exl_{i<n} ps(i) = p\right).
\end{align*}
\end{theorem}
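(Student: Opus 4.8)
The plan is to prove the statement by induction on $n$ (for fixed $ps$ and $p$), using Euclid's Lemma (Lemma~\ref{Lem:PrimeToIrred}) in the step case together with the product characterisation of primality from Lemma~\ref{Lem:PrimeChar}. Note that the conclusion is the cr existential $\exl_{i<n}ps(i)=p$, so the extracted term has to return the index at which $p$ occurs in $ps$; the recursion on $n$ is what supplies this datum, the witness being $n$ itself in the ``new'' case and the recursively obtained index otherwise.

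For the base case $n=0$ the product $\prod_{i<0}ps(i)$ reduces to $1$, so the hypothesis $p\mid 1$ together with $1<p$ (from $\Prime{p}$) is contradictory, and we conclude by ex falso. For the step case, assume $\Prime{p}$, $\Primes{ps}{\Suc n}$ and $p\mid\prod_{i<\Suc n}ps(i)$. Unfolding the product gives $\prod_{i<\Suc n}ps(i)=\left(\prod_{i<n}ps(i)\right)\cdot ps(n)$, so Lemma~\ref{Lem:PrimeToIrred} applied to $p$, $\prod_{i<n}ps(i)$ and $ps(n)$ yields $p\mid\prod_{i<n}ps(i)\orb p\mid ps(n)$. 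Since divisibility on positive binary numbers is decidable (it is defined via $\gcd$), we may case-split on this Boolean disjunction.

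If $p\mid\prod_{i<n}ps(i)$, then from $\Primes{ps}{\Suc n}$ we also have $\Primes{ps}{n}$, so the induction hypothesis gives some $i_0<n$ with $ps(i_0)=p$, and $i_0<\Suc n$ finishes this branch. If instead $p\mid ps(n)$, we use \texttt{PosDivToProd} to obtain some $r$ with $r\cdot p=ps(n)$; since $\Prime{ps(n)}$ holds (again from $\Primes{ps}{\Suc n}$), Lemma~\ref{Lem:PrimeChar} forces $r=1$ or $p=1$, and as $p>1$ we obtain $ps(n)=p$, so the witness $i:=n$ works.

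The proof is mostly routine; the points that require care are (i) splitting the product in the correct order so that Euclid's Lemma applies directly, (ii) the treatment of the disjunction returned by Euclid's Lemma — for positive binary numbers this is an honest decidable case distinction, whereas the analogous natural-number statement (\texttt{NatPrimeDivProdPrimesToInPrimes}) must instead appeal to the cr version of the disjunction in \texttt{NatPrimeToIrred} — and (iii) weakening the bound $i<n$ to $i<\Suc n$ when threading the induction hypothesis through. I expect (ii) to be the subtlest step, since it is where the difference between the unary and binary formalisations manifests itself in the computational content.
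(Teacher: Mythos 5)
Your proposal is correct and follows essentially the same route as the paper's proof: induction on $n$, an appeal to Euclid's Lemma (Lemma~\ref{Lem:PrimeToIrred}) to split the step case, the induction hypothesis in the first branch, and the primality of $ps(n)$ (via the product characterisation) to force $ps(n)=p$ in the second. Your additional remarks on the computational content and on the cr/Boolean disjunction distinction between the unary and binary versions accurately reflect the discussion the paper gives after Lemma~\ref{Lem:PrimeToIrred}.
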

\begin{proof}
We proceed by induction on $n$ for the given $ps$ and $p$.
For the base case, since $\prod_{i<0}ps(i) = 1$ and $p$ is prime, the statement $p \mid 1$ is impossible. Thus, there is nothing to prove.

For the induction step, let $n$ be given with $\Primes{ps}{\Suc n}$ (i.e.~$\Primes{ps}{n}$ and $\Prime{ps\ n}$) and assume that
\[
p \Bigm| \prod_{i<\Suc n}ps(i) = \left(\prod_{i<n}ps(i)\right) \cdot ps(n).
\]
By Lemma \ref{Lem:PrimeToIrred}, it follows that either  
$p \mid \prod_{i<n}ps(i)$, or  
$p \mid ps(n)$.
In the first case, the claim follows directly by the induction hypothesis.  
In the second case, there exists some $q$ such that  
\[
p \cdot q = ps(n).
\]  
Since $ps(n)$ is prime, it must be that either $p = 1$ or $q = 1$.  
Because $p$ is prime, $p = 1$ is impossible. Hence, $q = 1$ and therefore $p = ps(n)$, as required.  
\end{proof}
The proof is essentially an iterative application of Lemma \ref{Lem:PrimeToIrred}. 
Therefore, the extracted term essentially performs a bounded search from above for some $i<n$ with $p\mid ps(i)$. 
\section{Fundamental Theorem of Arithmetic}
The existence and uniqueness of the prime factorisation of an integer are known as the fundamental theorem of arithmetic and constitutes one of the central results in elementary number theory. A computational analysis of this theorem will likewise be one of the main results of this article.
Instead of working with integers, we are dealing with natural numbers and positive binary numbers. Furthermore, we will divide the fundamental theorem into two parts: first, we prove the existence of a prime factorisation, and then its uniqueness.

\subsection{Existence of the Prime Factorisation}
The proof of uniqueness is very simple, but its computational content is extremely inefficient, as we will see.
\begin{theorem}[\Href{https://github.com/FranziskusWiesnet/MinlogArith/blob/1c63c302aa52e8972b594a3ef8753fa11a0435eb/minlog/examples/arith/fta_nat.scm\#L13}{NatExPrimeFactorisation}, \Href{https://github.com/FranziskusWiesnet/MinlogArith/blob/1c63c302aa52e8972b594a3ef8753fa11a0435eb/minlog/examples/arith/fta_pos.scm\#L17}{PosExPrimeFactorisation}]
\label{Thm:ExPrimeFac}
\begin{align*}
\forall_p \exd_{ps} \exl_m\left( \Primes{ps}{m} \andnc \prod_{i<m}ps(i)=p\right)
\end{align*}
\end{theorem}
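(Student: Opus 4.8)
The plan is to prove the theorem by course-of-values induction on $p$. Since the available induction principle on $\PP$ only recurses on an immediate predecessor, I would first pass to the equivalent bounded formulation
\begin{align*}
\forall_{l}\,\forall_p\Bigl(p<l\to\exd_{ps}\exl_m\bigl(\Primes{ps}{m}\andnc\textstyle\prod_{i<m}ps(i)=p\bigr)\Bigr),
\end{align*}
prove this by ordinary induction on the natural number $l$, and recover the original statement by instantiating $l$ with $p+1$. The base case $l=0$ is vacuous, since no positive binary number is below $0$.

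For the induction step assume the claim for $l$ and let $p<\Suc l$ be given, so $p\leq l$. If $p=1$ we are done at once: take $m:=0$ and $ps$ arbitrary, so that $\Primes{ps}{0}=\true$ and $\prod_{i<0}ps(i)=1=p$. Otherwise $p>1$, and here the least-factor machinery does the work: by Lemma~\ref{Lem:LeastFactorPrime} we have $\Prime{\lf p}$, and by Lemma~\ref{Lem:LeastFactorProp} we have $1<\lf p$ and $\lf p\mid p$; via the product characterisation of divisibility (\texttt{PosDivToProd}) choose $p_0$ with $p_0\cdot\lf p=p$. Because $\lf p>1$ we get $p_0<p\leq l$, so the induction hypothesis applies to $p_0$ and produces $ps_0$ and $m_0$ with $\Primes{ps_0}{m_0}$ and $\prod_{i<m_0}ps_0(i)=p_0$. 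I would then extend this factorisation by appending $\lf p$: let $ps'$ agree with $ps_0$ on all indices below $m_0$ and have $ps'(m_0)=\lf p$. Then $\Primes{ps'}{\Suc m_0}$ holds, since the first $m_0$ entries are those of $ps_0$ and the last one, $\lf p$, is prime; and with the convention $\prod_{i<\Suc m_0}ps'(i)=\bigl(\prod_{i<m_0}ps'(i)\bigr)\cdot ps'(m_0)$ we obtain $\prod_{i<\Suc m_0}ps'(i)=p_0\cdot\lf p=p$. Thus $ps'$ together with $\Suc m_0$ witnesses the claim for $p$.

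The mathematical content here is light; the only real friction in a formal development is the bookkeeping around $ps'$ — in particular, one needs that $\Primes{\cdot}{m_0}$ and the bounded product $\prod_{i<m_0}(\cdot)$ do not depend on the sequence values at indices $\geq m_0$, which may require a small auxiliary induction. As for the extracted term: unfolding the recursion above, the extracted program repeatedly computes $\lf$ of the current value and recurses on the quotient — that is, it factorises by trial division. It is correct, but (as the remark at the start of this subsection indicates) computationally very inefficient, because each call to $\lf$ runs a $\mu$-style bounded search from below; this inefficiency is exactly the price paid for the simplicity of the proof.
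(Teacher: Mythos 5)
Your proposal is correct and follows essentially the same route as the paper's proof: the same reduction to the bounded statement with induction on the bound $l$, the same base and $p=1$ cases, and the same induction step extracting $\lf p$ as a prime factor and appending it to the factorisation of the cofactor obtained from the induction hypothesis. Your closing remarks on the sequence bookkeeping and on the extracted term (iterated trial division via $\lf$) also match the paper's discussion.
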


\begin{proof}
We prove the equivalent statement:
\begin{align*}
\forall_{l,p}\left( p<l \to \exd_{ps} \exl_m\left( \Primes{ps}{m} \andnc \prod_{i<m}ps(i)=p\right)\right).
\end{align*}
by induction on $l$. 
For $l=0$, there is nothing to show. 

For the induction step, let $l$ and $p<\Suc l$ be given. If $p=1$, we choose $ps=\lambda_n 1$ and $m=0$. 

If \( p > 1 \), we know from Lemma~\ref{Lem:LeastFactorPrime} that $\lf p$ is prime. By Lemma~\ref{Lem:LeastFactorProp}, we have \( \lf p \mid p \) and \( \lf p > 1 \), so there exists some \( q < p \) such that \( (\lf p) \cdot q = p \). Since also \( q < l \), we may apply the induction hypothesis to \( q \), yielding a sequence \( qs \) and natural number \( n \) such that \( \Primes{qs}{n} \) and \( \prod_{i < n} qs(i) = q \).

We define \( m := \Suc n \) and construct \( ps \) as follows:
\begin{align*}
ps(i) := \begin{cases}
\lf p & \text{if } i=n, \\
qs(i) & \text{otherwise.}
\end{cases}
\end{align*}
From \( \Prime{\lf p} \) and \( \Primes{qs}{n} \), it follows that \( \Primes{ps}{m} \). Furthermore, we compute:
\begin{align*}
\prod_{i<m}ps(i) = \left(\prod_{i<n}qs(i)\right) \cdot \lf p = q \cdot \lf p = p.
\end{align*}
This completes the proof.
\end{proof}

\paragraph{The extracted term.}
The extracted term takes a positive binary number as input, and is essentially an iterated application of \(\lf\).  Its output is a pair \((f, n)\), where $f: \NN\to \PP$ is a sequence and $n$ a natural number, such that $f(0),\dots,f(n-1)$ are exactly the prime divisors of the input with multiplicity.

Its tests are documented in \href{https://github.com/FranziskusWiesnet/MinlogArith/blob/1c63c302aa52e8972b594a3ef8753fa11a0435eb/test-files/fta_pos_test.txt#L1}{\texttt{fta\_pos\_test.txt}} and summarised in Appendix \ref{Sec:AppPrimeFactor}. We tested the factorisation of prime numbers, powers of primes, and products of twin primes.

While the extracted term verified that 10\,000\,019 is prime in 2.39 seconds, it took 24.87 seconds for 120\,000\,007. Hence, there is a factor of about 12 between the two primes and also between the two runtimes. The other measurements also suggest a linear relationship between the runtime and the prime given as input, which means that the runtime grows exponentially with the number of digits of the prime. Since the extracted term is essentially an iterative application of $\lf$, and $\lf$ is basically trial division from below, this is what we expect.

Numbers with small prime factors can be factored much faster by this algorithm, as also confirmed by the tests on prime powers. For instance, even the number $3^{1000}$ can be decomposed into its 1000 prime factors within 28 seconds. $2^{1000}$ is even better handled, since $\lf$ detects this factor immediately via the case distinction for $p=\SZero p'$ (see  Definition \ref{Def:LeastFactor}).
For all other prime powers -- for example, in our test with 47 -- the runtime seems to increases between quadratic and cubic with the exponent. This is consistent with the fact that the number of applications of $\lf$ is exactly linear in the exponent, that in each step divisibility is checked only a constant number of times (exactly the number of base prime), and that this is done using Stein's algorithm, whose runtime is quadratic in the number of digits and thus quadratic in the exponent. However, the measured data together with the polynomial approximation does not yield any clear results here, so substantially more tests, also with other prime powers, would be necessary for a clear verification. Our tests with powers of $47$ actually suggest a quadratic runtime w.r.t.~the exponent, whereas the theoretical considerations rather point to a cubic runtime.

The algorithm takes particularly long when the prime factors are close to the square root of the input. To investigate this, we in particular ran tests on products of twin primes. Here we found that $100\,160\,063 =10\,007\cdot 10\,009$ takes almost twice as long to decompose as the larger prime $120\,000\,007$. This is because $\lf$ does divisibility tests only up to the square root of the input and, if no divisor is found, $\lf$ returns the input itself as the factor. In the case of products of primes, it then finds a divisor and still has to factor the complementary cofactor. Here, $\lf$ starts its search again from below, even though it would suffice to start from the factor that has already been found. In principle, this suggests a possible way to improve the algorithm further. Nevertheless, trial division is inherently very inefficient, so it probably makes more sense to focus entirely on other factorisation algorithms, which we do in Section \ref{Sec:Fermat}.

\subsection{Permutations}
Having established the existence of a prime factorisation, we now turn to its uniqueness. For this purpose, we require a precise formalisation of the notion of ``uniqueness''.
In the context of the fundamental theorem of arithmetic, uniqueness means that for any two factorisations of the same number, there exists a permutation mapping one factor sequence to the other. Therefore, we must first develop the necessary theory of permutations.

As we will see, permutations are a subject whose precise formalisation in a computer proof environment is significantly more complex than the somewhat informal treatment typically found in textbook proofs. This complexity becomes apparent already in the definition of a permutation -- specifically, in the choice of its type:

In standard textbooks, a permutation is defined as a bijective mapping on a finite set of the form \(\{1, \dots, n\}\). However, this approach leads to complications in our formal setting, as the type of the permutation would then depend on the natural number \(n\). Unlike proof assistants such as Agda or Lean, Minlog avoids the use of dependent types. As a result, such a definition becomes difficult to manage, particularly when reasoning across varying values of \(n\), which will be necessary in our formalisation.

For this reason, we define permutations as functions on all natural numbers that act as identity functions from a certain point onward. Furthermore, we explicitly include the inverse function in the definition, so that bijectivity can be stated directly via the existence of an explicit inverse.
\begin{definition}[\Href{https://github.com/FranziskusWiesnet/MinlogArith/blob/1c63c302aa52e8972b594a3ef8753fa11a0435eb/minlog/examples/arith/fta_nat.scm\#L122}{Pms}]
Given a natural number \( m \) and functions \( f, g : \NN \to \NN \), the predicate \( \Pms_m(f, g) \) expresses that \( f \) is a permutation of the numbers \( 0, \dots, m - 1 \) with inverse \( g \). 
Formally, \( \Pms \) is defined as an inductive predicate with the following introduction rule:
\begin{align*}
\Pms^+:\quad\forall_{m,f,g}\left( \forall_ng(f\, n)=n \to \forall_n f(g\, n)=n\to \forall_{n\geq m}f\, n = n\to \Pms_m(f,g)\right)
\end{align*}
The predicate $\Pms$ itself is defined as non-computational.
\end{definition}

In addition to the introduction rule above, there is also an elimination axiom, derived from the introduction rule, which is given by:
\begin{align*}
\Pms^-(X):\ \ &\forall_{m,f,g}\big(\forall_n g(f\,n)=n \to \forall_n f(g\,n)=n \to \forall_{n\geq m} f\,n = n \to X(m,f,g)\big) \to \\
 &\forall_{m,f,g}\left(\Pms_m(f,g) \to X(m,f,g)\right),
\end{align*}
where \( X \) is any non-computational predicate with the same arity as \( \Pms \).
For further details on inductively defined predicates, we refer the reader to \cite[Section 7.1.2]{schwichtenberg2012proofs}.
\begin{lemma}\label{Lem:PmsProps}
The universal closures of the following statements hold:
\renewcommand{\arraystretch}{1.2}
\begin{equation*}
\begin{array}{ll}
    \href{https://github.com/FranziskusWiesnet/MinlogArith/blob/1c63c302aa52e8972b594a3ef8753fa11a0435eb/minlog/examples/arith/fta_nat.scm#L148}{\mathtt{PmsCirc}}:
    & \Pms_m(f,g) \to \forall_n g(f\,n) = n \\
    \href{https://github.com/FranziskusWiesnet/MinlogArith/blob/1c63c302aa52e8972b594a3ef8753fa11a0435eb/minlog/examples/arith/fta_nat.scm#L157}{\mathtt{PmsCircInv}}:
     & \Pms_m(f,g) \to \forall_n f(g\,n) = n \\
    \href{https://github.com/FranziskusWiesnet/MinlogArith/blob/1c63c302aa52e8972b594a3ef8753fa11a0435eb/minlog/examples/arith/fta_nat.scm#L166}{\mathtt{PmsIdOut}}:
    & \Pms_m(f,g) \to \forall_{n\geq m} f\,n = n \\
    \href{https://github.com/FranziskusWiesnet/MinlogArith/blob/1c63c302aa52e8972b594a3ef8753fa11a0435eb/minlog/examples/arith/fta_nat.scm#L223}{\mathtt{PmsIdOutInv}}: & \Pms_m(f,g) \to \forall_{n\geq m} g\,n = n \\
    \href{https://github.com/FranziskusWiesnet/MinlogArith/blob/1c63c302aa52e8972b594a3ef8753fa11a0435eb/minlog/examples/arith/fta_nat.scm#L175}{\mathtt{PmsSucc}}: & \Pms_n(f,g) \to \Pms_{\Suc n}(f,g) \\
    \href{https://github.com/FranziskusWiesnet/MinlogArith/blob/1c63c302aa52e8972b594a3ef8753fa11a0435eb/minlog/examples/arith/fta_nat.scm#L196}{\mathtt{PmsSuccInv}}: & \Pms_{\Suc n}(f,g) \to f\, n = n \to \Pms_n(f,g) \\
    \href{https://github.com/FranziskusWiesnet/MinlogArith/blob/1c63c302aa52e8972b594a3ef8753fa11a0435eb/minlog/examples/arith/fta_nat.scm#L253}{\mathtt{PmsConcat}}: & \Pms_n(f_0,f_1) \to \Pms(g_0,g_1) \to \Pms_n(f_0 \circ g_0, g_1\circ f_1) \\
\end{array}
\end{equation*}
\end{lemma}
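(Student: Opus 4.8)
The plan is to dispatch all seven statements with a single recipe. To reason from a hypothesis $\Pms_m(f,g)$, apply the elimination axiom $\Pms^-(X)$ with the appropriate non-computational predicate $X$; since $\Pms$ has exactly one clause, this replaces the hypothesis by its three defining conjuncts $\forall_n g(f\,n)=n$, $\forall_n f(g\,n)=n$, and $\forall_{n\ge m} f\,n=n$. Conversely, to establish a new instance $\Pms_{m'}(f',g')$, verify these three conjuncts for $m',f',g'$ and apply $\Pms^+$. Every target predicate occurring in the lemma concludes with a Leibniz equality or with $\Pms$ itself (which is nc), so the requirement that $X$ be non-computational is met throughout.

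With this recipe, $\mathtt{PmsCirc}$, $\mathtt{PmsCircInv}$ and $\mathtt{PmsIdOut}$ are immediate, being precisely the three conjuncts. For $\mathtt{PmsIdOutInv}$, fix $n\ge m$; then $g\,n = g(f\,n) = n$, where the first equality is $\mathtt{PmsIdOut}$ and the second is $\mathtt{PmsCirc}$. For $\mathtt{PmsSucc}$, the first two conjuncts of $\Pms_{\Suc n}(f,g)$ are literally those of $\Pms_n(f,g)$, and any $k\ge\Suc n$ also satisfies $k\ge n$, so $f\,k=k$; then $\Pms^+$ gives the claim. For $\mathtt{PmsSuccInv}$, again the first two conjuncts carry over, and for the third we take $k\ge n$ and distinguish, decidably since all objects are total, the case $k\ge\Suc n$, handled by the third conjunct of $\Pms_{\Suc n}(f,g)$, from the case $k=n$, handled by the extra hypothesis $f\,n=n$.

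For $\mathtt{PmsConcat}$, assume $\Pms_n(f_0,f_1)$ and $\Pms_n(g_0,g_1)$; I read the unsubscripted $\Pms(g_0,g_1)$ in the statement as $\Pms_n(g_0,g_1)$, a subscript appearing to be missing. I must check the three conjuncts of $\Pms_n(f_0\circ g_0,\, g_1\circ f_1)$. For the first, $(g_1\circ f_1)\big((f_0\circ g_0)\,k\big) = g_1\big(f_1(f_0(g_0\,k))\big) = g_1(g_0\,k) = k$, using $\mathtt{PmsCirc}$ for $f_0,f_1$ and then for $g_0,g_1$. The second is symmetric: $(f_0\circ g_0)\big((g_1\circ f_1)\,k\big) = f_0\big(g_0(g_1(f_1\,k))\big) = f_0(f_1\,k) = k$, using $\mathtt{PmsCircInv}$ for $g_0,g_1$ and then for $f_0,f_1$. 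For the third, let $k\ge n$; then $g_0\,k=k$ and $f_0\,k=k$ by $\mathtt{PmsIdOut}$ applied to each hypothesis, so $(f_0\circ g_0)\,k = f_0(g_0\,k) = f_0\,k = k$, and $\Pms^+$ concludes. If the intended hypothesis is instead $\Pms_{m}(g_0,g_1)$ with $m\le n$, the same argument works verbatim, since $k\ge n$ still forces $g_0\,k=k$.

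There is no genuine obstacle here: every step is a short equational chain obtained by unfolding $\Pms$ via $\Pms^-$ and reassembling it via $\Pms^+$. The only points needing mild care are the decidable case split in $\mathtt{PmsSuccInv}$ and, in $\mathtt{PmsConcat}$, keeping the composition order straight — the inverse of $f_0\circ g_0$ is $g_1\circ f_1$, not $f_1\circ g_1$. For the detailed tactic scripts we refer to the Minlog files.
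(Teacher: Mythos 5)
Your proof is correct and follows exactly the route the paper indicates, namely unfolding each hypothesis $\Pms_m(f,g)$ via the elimination axiom $\Pms^-$ into its three defining conjuncts and reassembling new instances via $\Pms^+$; the paper itself only states that the claims ``follow naturally from the axioms $\Pms^+$ and $\Pms^-$'' and defers the details to the Minlog code, which your write-up supplies. Your reading of the missing subscript in $\mathtt{PmsConcat}$ and the decidable case split in $\mathtt{PmsSuccInv}$ are both handled appropriately.
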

\begin{proof}
These statements follow naturally from the axioms $\Pms^+$ and $\Pms^-$, among other theorems. For further details, we refer to the Minlog code.
\end{proof}

A special class of permutations are transpositions, which essentially generate all permutations. We will therefore first prove the lemma concerning the invariance of the general product under permutations for transpositions (specifically, those involving the last element), and then generalise the result to arbitrary permutations.
This is one of those cases where a typical textbook proof is considerably easier to carry out than a fully precise formalisation.
\begin{definition}[\Href{https://github.com/FranziskusWiesnet/MinlogArith/blob/1c63c302aa52e8972b594a3ef8753fa11a0435eb/minlog/examples/arith/fta_nat.scm\#L282}{Transp}]
For natural numbers $n,m$ we define the transposition $\tau_{n,m}:\NN\to\NN$ by the rule
\begin{align*}
\tau_{n,m}(i) :=
\begin{cases}
m & \text{ if }i=n\\
n & \text{ if }i=m\\
i & \text{ otherwise.}
\end{cases}
\end{align*}
\end{definition}
\begin{lemma}[\Href{https://github.com/FranziskusWiesnet/MinlogArith/blob/1c63c302aa52e8972b594a3ef8753fa11a0435eb/minlog/examples/arith/fta_nat.scm\#L297}{PmsTransp}]
\label{Lem:PmsTransp}
\begin{align*}
\forall_m\forall_{n_0,n_1<m}\Pms_m(\tau_{n_0,n_1},\tau_{n_0,n_1})
\end{align*}
\end{lemma}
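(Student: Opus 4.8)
The plan is to apply the introduction rule $\Pms^+$ directly, taking both the permutation and its inverse to be $\tau_{n_0,n_1}$ itself. After fixing $m$ and $n_0,n_1 < m$, this reduces the goal $\Pms_m(\tau_{n_0,n_1},\tau_{n_0,n_1})$ to three obligations: (i) $\forall_n\, \tau_{n_0,n_1}(\tau_{n_0,n_1}(n)) = n$, (ii) the same statement again — since here the permutation coincides with its own inverse, the first two premises of $\Pms^+$ are literally identical — and (iii) $\forall_{n\geq m}\, \tau_{n_0,n_1}(n) = n$. So it suffices to show that $\tau_{n_0,n_1}$ is an involution and that it is the identity on all $n\geq m$.

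For the involution property (obligations (i) and (ii)), I would argue by a case distinction on $n$ mirroring the definition of $\tau_{n_0,n_1}$, i.e.\ by the decidable tests $n = n_0$ and $n = n_1$. If $n = n_0$, then $\tau_{n_0,n_1}(n) = n_1$, and a further case distinction on whether $n_1 = n_0$ gives $\tau_{n_0,n_1}(n_1) = n_0 = n$ in both sub-cases. If $n \neq n_0$ but $n = n_1$, then in particular $n_0 \neq n_1$, so $\tau_{n_0,n_1}(n) = n_0$ and $\tau_{n_0,n_1}(n_0) = n_1 = n$. Finally, if $n$ is distinct from both $n_0$ and $n_1$, then $\tau_{n_0,n_1}(n) = n$, and applying $\tau_{n_0,n_1}$ once more leaves it unchanged. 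For obligation (iii), let $n\geq m$ be given; from $n_0 < m \leq n$ we get $n \neq n_0$, and from $n_1 < m \leq n$ we get $n \neq n_1$, hence $\tau_{n_0,n_1}(n) = n$ by definition.

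There is no genuine obstacle here; the only point requiring care is the bookkeeping of the nested case distinctions, and in particular the degenerate case $n_0 = n_1$, in which $\tau_{n_0,n_1}$ is simply the identity. The formal definition of $\tau_{n_0,n_1}$ via Boolean equality tests handles this case automatically (the first clause fires and returns $n_1 = n_0$), but it must be kept in mind when the involution argument branches on $n = n_1$, since there one needs $n_0 \neq n_1$ to invoke the second clause. The remaining steps are routine equational reasoning on $\NN$ using the basic arithmetic lemmas available in the Minlog library; for the precise tactic script we refer to \texttt{PmsTransp}.
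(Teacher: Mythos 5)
Your proposal is correct and follows exactly the paper's route: the paper's proof is the single sentence that the claim ``follows directly from the introduction axiom of $\Pms$,'' and you simply spell out the three resulting obligations (the involution property twice and identity on $n\geq m$), including the correct handling of the degenerate case $n_0=n_1$. Nothing is missing.
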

\begin{proof}
This follows directly from the introduction axiom of $\Pms$.
\end{proof}
\begin{lemma}[\Href{https://github.com/FranziskusWiesnet/MinlogArith/blob/1c63c302aa52e8972b594a3ef8753fa11a0435eb/minlog/examples/arith/fta_nat.scm\#L367}{NatProdInvTranspAux}, \Href{https://github.com/FranziskusWiesnet/MinlogArith/blob/1c63c302aa52e8972b594a3ef8753fa11a0435eb/minlog/examples/arith/fta_pos.scm\#L103}{PosProdInvTranspAux}]
\label{Lem:ProdInvTranspAux}
\begin{align*}
\forall_{m,n,ps}\left( n<m \to \prod_{i<\Suc m}ps(i) = \prod_{i<\Suc m}ps(\tau_{n,m}(i))\right)
\end{align*}
\end{lemma}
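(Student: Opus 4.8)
The plan is to prove the lemma by induction on $m$, taking as induction predicate
\[
\forall_{n<m}\forall_{ps}\left(\prod_{i<\Suc m}ps(i)=\prod_{i<\Suc m}ps(\tau_{n,m}(i))\right).
\]
The base case $m=0$ is vacuous, since there is no $n<0$. For the step, assume the predicate for $m$ and let $n<\Suc m$ and $ps$ be given.

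First I would treat the case $n=m$ separately. Here $\tau_{m,\Suc m}$ only swaps the two topmost indices $m$ and $\Suc m$ and fixes everything below them, so by twice peeling off the outermost factor (using the defining recursion $\prod_{i<\Suc k}f(i)=\left(\prod_{i<k}f(i)\right)\cdot f(k)$, together with the fact that the product depends only on the values below its bound) both sides reduce to $\left(\prod_{i<m}ps(i)\right)\cdot ps(m)\cdot ps(\Suc m)$ and $\left(\prod_{i<m}ps(i)\right)\cdot ps(\Suc m)\cdot ps(m)$ respectively; equality then follows from commutativity and associativity of multiplication on $\PP$, with no appeal to the induction hypothesis. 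I will call this fact the \emph{adjacent-swap case} below.

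For the main case $n<m$, I would use the pointwise identity $\tau_{n,\Suc m}=\tau_{n,m}\circ\tau_{m,\Suc m}\circ\tau_{n,m}$, verified by a finite case distinction on the argument (only the inputs $n$, $m$ and $\Suc m$ are non-trivial). The crucial auxiliary fact is: for every sequence $qs$ and every $n<m$,
\[
\prod_{i<\Suc\Suc m}qs(\tau_{n,m}(i))=\prod_{i<\Suc\Suc m}qs(i),
\]
which follows by splitting off the top factor — left fixed by $\tau_{n,m}$ since $\Suc m>m>n$ — and then applying the induction hypothesis to the remaining product $\prod_{i<\Suc m}qs(\tau_{n,m}(i))=\prod_{i<\Suc m}qs(i)$. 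Combining these, rewriting the right-hand side $\prod_{i<\Suc\Suc m}ps(\tau_{n,\Suc m}(i))$ with the conjugation identity yields $\prod_{i<\Suc\Suc m}ps(\tau_{n,m}(\tau_{m,\Suc m}(\tau_{n,m}(i))))$; the auxiliary fact strips the outermost $\tau_{n,m}$, the adjacent-swap case strips the $\tau_{m,\Suc m}$, and the auxiliary fact strips the last $\tau_{n,m}$, leaving $\prod_{i<\Suc\Suc m}ps(i)$, as desired.

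I expect the principal difficulty to be bookkeeping rather than conceptual: one has to be precise about the recursion equation of the product constant and, more importantly, to instantiate the induction hypothesis and the adjacent-swap case at the right composed sequences (e.g.~at $ps\circ\tau_{n,m}\circ\tau_{m,\Suc m}$). This is exactly where an informal argument would write ``clearly, by commutativity'' and silently perform these rearrangements, which is precisely the discrepancy between textbook and formal proof pointed out just before the lemma. An alternative avoiding the conjugation identity would peel the last factor off both products and then show, by a secondary induction, that altering which single entry sits at position $n$ commutes with appending the last entry; this however requires a separate notion of ``product with one entry replaced'', so the conjugation route appears preferable.
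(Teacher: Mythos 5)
Your proof is correct, but in the main case $n<m$ it takes a genuinely different route from the paper. The paper also inducts on $m$ and dispatches $n=m$ by commutativity, but for $n<m$ it applies the induction hypothesis exactly once, to the transposition $\tau_{n,m}$ and an ad-hoc modified sequence $qs$ defined by $qs(m):=ps(\Suc m)$ and $qs(i):=ps(i)$ otherwise, and then finishes with the pointwise identity $ps(\tau_{n,\Suc m}(i))=qs(\tau_{n,m}(i))$ for $i<m$. You instead rewrite via the conjugation identity $\tau_{n,\Suc m}=\tau_{n,m}\circ\tau_{m,\Suc m}\circ\tau_{n,m}$ and strip the three factors in turn, which costs two applications of the induction hypothesis (through your auxiliary fact, instantiated at $ps\circ\tau_{n,m}\circ\tau_{m,\Suc m}$ and at $ps$) plus one use of the adjacent-swap case at $ps\circ\tau_{n,m}$; all three instantiations are legitimate because both the induction hypothesis and the adjacent-swap case are universally quantified over the sequence. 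The trade-off: the paper's route is shorter in proof steps but hinges on inventing the right one-entry-replacement sequence and a small pointwise lemma, whereas yours is more systematic and reuses the lemma's own statement uniformly, at the price of verifying the conjugation identity and managing the composed sequences. It is worth noting that the ``alternative'' you dismiss at the end --- peeling off the last factor and working with a product having one entry replaced --- is essentially what the paper's $qs$ construction amounts to, so that route is not only viable but is the one actually formalised.
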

\begin{proof}
We proceed by induction on $m$. 
For $m=0$, there is no $n<m$, so the statement holds trivially.

For the induction step let $m, n$, and $ps$ be given with $n < \Suc m$. Our goal is to show that
\[
\prod_{i<\Suc (\Suc m)}ps(i) = \prod_{i<\Suc (\Suc m)}ps(\tau_{n,\Suc m}(i)).
\]
For the right-hand side, we have:
\begin{align*}
\prod_{i<\Suc (\Suc m)}ps(\tau_{n,\Suc m}(i)) = \left(\prod_{i<m}ps(\tau_{n,\Suc m}(i))\right) \cdot ps(m) \cdot ps(n).
\end{align*}
For the left-hand side, we obtain:
\[
\prod_{i<\Suc (\Suc m)}ps(i) = \left(\prod_{i<m}ps(i)\right) \cdot ps(m) \cdot ps(\Suc m).
\]
If $m=n$, then $\tau_{n,\Suc m}(i) = i$ for all $i<m$, and thus both sides are equal.  
Hence, we assume that $n<m$. We define a modified sequence $qs$ by
\[
qs(i) :=
\begin{cases}
ps(\Suc m), & \text{if } i = m, \\
ps(i), & \text{otherwise}.
\end{cases}
\]
Applying the induction hypothesis to $n$ and $qs$, we get:
\begin{align*}
\left(\prod_{i<m}ps(i)\right) \cdot ps(\Suc m) &= \prod_{i<\Suc m}qs(i) \\
&= \prod_{i<\Suc m}qs(\tau_{n,m}(i)) \\
&= \left(\prod_{i<m}qs(\tau_{n,m}(i))\right) \cdot ps(n).
\end{align*}
Thus, it remains to show that:
\[
\prod_{i<m}ps(\tau_{n,\Suc m}(i)) = \prod_{i<m}qs(\tau_{n,m}(i)).
\]
This follows from the fact that $ps(\tau_{n,\Suc m}(i)) = qs(\tau_{n,m}(i))$ for all $i<m$, which can be seen by the definition of $qs$, $\tau$, and by considering the cases $i=n$ and $i\neq n$.
\end{proof}

\begin{lemma}[\Href{https://github.com/FranziskusWiesnet/MinlogArith/blob/1c63c302aa52e8972b594a3ef8753fa11a0435eb/minlog/examples/arith/fta_nat.scm\#L497}{NatProdInvPms}, \Href{https://github.com/FranziskusWiesnet/MinlogArith/blob/1c63c302aa52e8972b594a3ef8753fa11a0435eb/minlog/examples/arith/fta_pos.scm\#L239}{PosProdInvPms}]
\begin{align*}
\forall_{n,f,g,ps}\left( \Pms_n(f,g) \to \prod_{i<n}ps(i) = \prod_{i<n}ps(f\,i)\right)
\end{align*}
\end{lemma}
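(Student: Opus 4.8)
The plan is to proceed by induction on $n$, with Lemma \ref{Lem:ProdInvTranspAux} (invariance of the product under a transposition of the last index) as the essential building block. Since $\Pms$ is non-computational and the conclusion is a Leibniz equality, there is no computational content to track, so we may freely perform case distinctions. For $n = 0$ both sides are the empty product $1$, and there is nothing to show.

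For the induction step, let $f,g,ps$ with $\Pms_{\Suc n}(f,g)$ be given, and set $k := f(n)$. First I would note that $k < \Suc n$: if $k \geq \Suc n$, then $f$ fixes $k$, so $k = g(f(k)) = g(k) = g(f(n)) = n$, contradicting $k \geq \Suc n$. If $k = n$, then $f$ fixes $n$, so \texttt{PmsSuccInv} from Lemma \ref{Lem:PmsProps} gives $\Pms_n(f,g)$, and the induction hypothesis applied to $f,g,ps$ together with the extra factor $ps(n) = ps(f\,n)$ settles this case at once. So assume $k < n$. Put $f' := \tau_{k,n}\circ f$ and $g' := g\circ\tau_{k,n}$. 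By Lemma \ref{Lem:PmsTransp} we have $\Pms_{\Suc n}(\tau_{k,n},\tau_{k,n})$ (as $k<\Suc n$ and $n<\Suc n$), and \texttt{PmsConcat} from Lemma \ref{Lem:PmsProps} combined with the hypothesis yields $\Pms_{\Suc n}(f',g')$; since $f'(n) = \tau_{k,n}(k) = n$, \texttt{PmsSuccInv} gives $\Pms_n(f',g')$. Now apply the induction hypothesis to $f'$ and the reindexed sequence $qs := \lambda_j\, ps(\tau_{k,n}\, j)$, obtaining $\prod_{i<n} ps(\tau_{k,n}\, i) = \prod_{i<n} ps(\tau_{k,n}(f'\, i))$. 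Appending the factor $ps(\tau_{k,n}\, n) = ps(k)$ on both sides — legitimate because $f'(n) = n$ — upgrades this to an identity of products over $\Suc n$ terms; and since $\tau_{k,n}$ is an involution, $\tau_{k,n}\circ f' = f$, so the right-hand side becomes $\prod_{i<\Suc n} ps(f\, i)$, while the left-hand side is $\prod_{i<\Suc n} ps(\tau_{k,n}\, i)$. Finally, Lemma \ref{Lem:ProdInvTranspAux} with $m := n$ (using $k < n$) gives $\prod_{i<\Suc n} ps(i) = \prod_{i<\Suc n} ps(\tau_{k,n}\, i)$, and chaining these equalities completes the induction step.

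The routine parts — the empty-product base case, the bound $f(n) < \Suc n$, and the involution identity for $\tau_{k,n}$ — are all immediate from the definitions and Lemma \ref{Lem:PmsProps}. I expect the main friction to be purely the index bookkeeping: keeping straight the three maps $f$, $f' = \tau_{k,n}\circ f$, and $\tau_{k,n}$, and instantiating Lemma \ref{Lem:ProdInvTranspAux} with exactly the right transposition (namely $k$ with the top index $n$) so that the factor split at the end of the product lines up correctly. This is precisely the kind of situation the paper flags, where a fully precise formalisation is markedly more delicate than the textbook one-liner that "permutations are generated by transpositions".
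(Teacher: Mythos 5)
Your proposal is correct and follows essentially the same route as the paper's proof: induction on $n$, splitting on whether $f(n)=n$, and in the nontrivial case combining \texttt{PmsTransp}, \texttt{PmsConcat}, and \texttt{PmsSuccInv} to apply the induction hypothesis to $\tau_{f n,n}\circ f$ and $ps\circ\tau_{f n,n}$, then concluding via Lemma \ref{Lem:ProdInvTranspAux}. The only cosmetic differences are that you make explicit the bound $f(n)<\Suc n$ and the step of appending the last factor, both of which the paper leaves implicit.
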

\begin{proof}
We proceed by induction on $n$.  

For the base case $n=0$, there is nothing to prove.  

For the induction step, assume that $f,g,ps$, and $n$ are given such that $\Pms_{\Suc n}(f,g)$ holds. If $f(n) = n$, it follows that $\Pms_n(f,g)$ holds by \texttt{PmsSuccInv} from Lemma \ref{Lem:PmsProps}, and the claim follows directly by the induction hypothesis.  

Thus, we assume that $f\,n \neq n$, and therefore $f\,n<n$. By Lemma \ref{Lem:ProdInvTranspAux}, we obtain  
\begin{align}
\label{Eq:ProdInvPms0}
\prod_{i<\Suc n}ps(i) = \prod_{i<\Suc n}ps(\tau_{f\, n,n}(i)) = \left(\prod_{i<n}ps(\tau_{f\,n,n}(i))\right) \cdot ps(f\,n).
\end{align}
Furthermore, by \texttt{PmsConcat} from Lemma \ref{Lem:PmsProps} and Lemma \ref{Lem:PmsTransp}, we have  
\[
\Pms_{\Suc n}(\tau_{f\,n,n} \circ f, g \circ \tau_{f\,n,n}),
\]
and since $(\tau_{f\,n,n} \circ f)(n) = n$, applying \texttt{PmsSuccInv} from Lemma \ref{Lem:PmsProps} yields  
\[
\Pms_n(\tau_{f\,n,n} \circ f, g \circ \tau_{f\,n,n}).
\]
Thus, we apply the induction hypothesis to $\tau_{f\,n,n} \circ f$, $g \circ \tau_{f\,n,n}$, and $ps\circ \tau_{f\,n,n}$, obtaining  
\begin{align*}
\prod_{i<n}(ps\circ\tau_{f\,n,n})(i)) = \prod_{i<n}(ps\circ\tau_{f\,n,n} \circ \tau_{f\,n,n} \circ f)(i) = \prod_{i<n}ps(f\,i).
\end{align*}
Continuing from Equation \eqref{Eq:ProdInvPms0}, we conclude  
\begin{align*}
\prod_{i<\Suc n}ps(i) &= \left(\prod_{i<n}ps(\tau_{f\,n,n}(i))\right) \cdot ps(f\,n) \\
&= \left(\prod_{i<n}ps(f\,i)\right) \cdot ps(f\,n) \\
&= \prod_{i<\Suc n}ps(f\,i),
\end{align*}
which completes the proof.
\end{proof}

We also need the predicate \(\Primes{\cdot} {\cdot}\) to be invariant under permutations. However, it suffices to establish this property for the specific case of transpositions involving the last element:
\begin{lemma}[\Href{https://github.com/FranziskusWiesnet/MinlogArith/blob/1c63c302aa52e8972b594a3ef8753fa11a0435eb/minlog/examples/arith/fta_nat.scm\#L602}{NatPrimesInvTranspAux}, \Href{https://github.com/FranziskusWiesnet/MinlogArith/blob/1c63c302aa52e8972b594a3ef8753fa11a0435eb/minlog/examples/arith/fta_pos.scm\#L354}{PosPrimesInvTranspAux}]
\label{Lem:PrimesInvTranspAux}
\begin{align*}
\forall_{m,n,ps}\left( n<m \to \Primes{ps}{m} \to \Primes{ps\circ\tau_{n,m}}{m}\right)
\end{align*}
\end{lemma}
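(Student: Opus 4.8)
The statement is one of those that a textbook would dispatch in a single clause -- re-indexing a list of primes by a bijection again yields a list of primes -- but, as the paper emphasises, turning this into a formal argument about the Boolean predicate $\Primes{\cdot}{\cdot}$ takes a little care. The plan is to pass from $\Primes{\cdot}{\cdot}$ to a pointwise condition and then read the result off from the definition of $\tau_{n,m}$.

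First I would record the routine characterisation that $\Primes{qs}{k}$ holds precisely when $\Prime{qs\,j}$ for every $j<k$; this is just the clause $\Primes{qs}{\Suc k}=\Primes{qs}{k}\andb\Prime{qs\,k}$ unfolded $k$ times, proved by a short induction on $k$ (or taken from the Minlog library), and it lets us replace an iterated Boolean conjunction by a single universally quantified formula. Applying this in both directions, the lemma reduces to showing, for each index $j$ ranging over $\{0,\dots,m\}$ -- the same range as in Lemma~\ref{Lem:ProdInvTranspAux} -- that $\Prime{(ps\circ\tau_{n,m})\,j}$. Fix such a $j$ and split into the three cases dictated by the definition of $\tau_{n,m}$: if $j=n$ then $(ps\circ\tau_{n,m})\,j=ps\,m$; if $j=m$ then $(ps\circ\tau_{n,m})\,j=ps\,n$; and otherwise $(ps\circ\tau_{n,m})\,j=ps\,j$. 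In every branch the value is $ps$ evaluated at one of the indices $j$, $n$, $m$, each of which lies in the range the hypothesis $\Primes{ps}{\cdot}$ certifies as prime (here $n<m$ is exactly what is needed to place $n$ -- and hence also $j$ -- in that range), so $\Prime{(ps\circ\tau_{n,m})\,j}$ is immediate. An equally short alternative is a direct induction on $m$ along the lines of the proof of Lemma~\ref{Lem:ProdInvTranspAux}, with ``is prime'' taking the role of the product identity.

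I expect the only genuine obstacle to be bookkeeping rather than content: moving back and forth between $\Primes{qs}{\Suc k}$ and the pair $\bigl(\Primes{qs}{k},\,\Prime{qs\,k}\bigr)$, threading $\andb$-introduction and $\andb$-elimination through the case analysis, and verifying that $\tau_{n,m}$ never carries an in-range index outside the range on which $\Primes{ps}{\cdot}$ is assumed. None of this is deep, but it is precisely the kind of step that is a half-sentence on paper and a handful of careful tactic lines in Minlog; for those routine manipulations I would defer to the formalisation.
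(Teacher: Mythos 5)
Your strategy is sound, and your second suggestion (a direct induction on $m$ mirroring Lemma~\ref{Lem:ProdInvTranspAux}) is exactly what the paper does -- its entire proof is the remark that the argument is ``similar to the proof of Lemma~\ref{Lem:ProdInvTranspAux}'', so there is nothing in the paper your pointwise reformulation conflicts with. The reduction of $\Primes{\cdot}{\cdot}$ to a universally quantified primality claim followed by the three-way case split on $\tau_{n,m}$ is a perfectly good, arguably cleaner, way to organise the same content.

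There is, however, one point you slid past that is doing real work. You take the index $j$ to range over $\{0,\dots,m\}$, ``the same range as in Lemma~\ref{Lem:ProdInvTranspAux}''; but the statement as printed asserts $\Primes{ps\circ\tau_{n,m}}{m}$, and by the definition $\Primes{qs}{\Suc k}=\Primes{qs}{k}\andb\Prime{qs\,k}$ this only ranges over $j<m$, i.e.\ $\{0,\dots,m-1\}$. Under that literal reading your key step fails: for $j=n$ you get $(ps\circ\tau_{n,m})(n)=ps(m)$, and $\Primes{ps}{m}$ certifies primality only of $ps(0),\dots,ps(m-1)$, so $\Prime{ps\,m}$ is simply not available (indeed $m=1$, $n=0$, $ps(0)=2$, $ps(1)=4$ refutes the literal statement). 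What you have actually proved is the $\Suc m$ version, $n<m\to\Primes{ps}{\Suc m}\to\Primes{ps\circ\tau_{n,m}}{\Suc m}$, which is the version that parallels Lemma~\ref{Lem:ProdInvTranspAux} and is what the proof of Theorem~\ref{Thm:PrimeFactorisationsToPms} needs (there the hypothesis $\Primes{ps}{\Suc n}$, not merely $\Primes{ps}{n}$, supplies $\Prime{ps\,n}$ for the swapped slot). So your instinct about the range is the right one, but you should state explicitly that you are proving the $\Suc m$ form (or add $\Prime{ps\,m}$ as an extra hypothesis); as written, your proof quietly changes the statement without acknowledging that the change is what makes the $j=n$ case go through.
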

\begin{proof}
The proof is similar to the proof of Lemma \ref{Lem:ProdInvTranspAux}.
\end{proof}
\subsection{Uniqueness of the Prime Factorisation}
With this theory of permutations in place, we are now ready to formulate and prove the uniqueness part of the fundamental theorem of arithmetic:
\begin{theorem}[\Href{https://github.com/FranziskusWiesnet/MinlogArith/blob/1c63c302aa52e8972b594a3ef8753fa11a0435eb/minlog/examples/arith/fta_nat.scm\#L746}{NatPrimeFactorisationsToPms}, \Href{https://github.com/FranziskusWiesnet/MinlogArith/blob/1c63c302aa52e8972b594a3ef8753fa11a0435eb/minlog/examples/arith/fta_pos.scm\#L504}{PosPrimeFactorisationsToPms}]
\label{Thm:PrimeFactorisationsToPms}
\begin{align*}
\forall_{n,m,ps,qs}\biggl(\ & \Primes{ps}{n} \to \Primes{qs}{m} \to \prod_{i<n}ps(i)=\prod_{i<m}qs(i)\to\\ & n=m\ \andr\ \exd_f \exl_g\left(\Pms_n(f,g) \andnc \forall_{i<n} ps(f\,i) = qs(i)\right)\biggl).
\end{align*}
\end{theorem}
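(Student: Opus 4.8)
The plan is to argue by induction on $n$: on the $qs$ side I locate a prime equal to $ps(n-1)$ using Theorem~\ref{Thm:PrimeDivProdPrimesToInPrimes}, move it into the last slot with a transposition, cancel it from the product equation, and invoke the induction hypothesis; the required permutation witnesses are then reassembled from the ones delivered by the hypothesis using the stability lemmas for $\Pms$. Because the quantifiers over $f$ and $g$ sit behind $\andnc$, that part of the statement has no computational content, so I may reason freely about permutations without tracking an extracted term.

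For the base case $n=0$ the hypothesis $\prod_{i<n}ps(i)=\prod_{i<m}qs(i)$ reads $1=\prod_{i<m}qs(i)$; since every $qs(i)>1$ by $\Primes{qs}{m}$, this forces $m=0$, and then $n=m$ together with the identity map (as its own inverse, via $\Pms^+$) is the required permutation. For the induction step write $n=\Suc n'$, so $\Primes{ps}{\Suc n'}$ yields $\Primes{ps}{n'}$ and $\Prime{ps\,n'}$, and the product equation becomes $\bigl(\prod_{i<n'}ps(i)\bigr)\cdot ps(n')=\prod_{i<m}qs(i)$. Thus $ps(n')$ is a prime dividing $\prod_{i<m}qs(i)$ (by \texttt{PosProdToDiv}); since $ps(n')>1$ this rules out $m=0$, so $m=\Suc m'$. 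By Theorem~\ref{Thm:PrimeDivProdPrimesToInPrimes} there is a $j<m$ with $qs(j)=ps(n')$. Put $qs':=qs\circ\tau_{j,m'}$; then $\Primes{qs'}{m}$ holds by Lemma~\ref{Lem:PrimesInvTranspAux}, the product is unchanged by Lemma~\ref{Lem:ProdInvTranspAux}, and $qs'(m')=qs(j)=ps(n')$. Cancelling the nonzero factor $ps(n')$ from $\bigl(\prod_{i<n'}ps(i)\bigr)\cdot ps(n')=\bigl(\prod_{i<m'}qs'(i)\bigr)\cdot ps(n')$ gives $\prod_{i<n'}ps(i)=\prod_{i<m'}qs'(i)$. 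Applying the induction hypothesis to $n',m',ps,qs'$ produces $n'=m'$ — hence $n=m$ — and functions $f',g'$ with $\Pms_{n'}(f',g')$ and $ps(f'\,i)=qs'(i)$ for all $i<n'$.

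It remains to assemble the permutation on $0,\dots,n'$. From $\Pms_{n'}(f',g')$ we get $\Pms_{\Suc n'}(f',g')$ by \texttt{PmsSucc} of Lemma~\ref{Lem:PmsProps}, and since $f'$ fixes $n'$ and $n'=m'$ we also have $ps(f'\,n')=ps(n')=qs'(m')=qs'(n')$, so $ps(f'\,i)=qs'(i)$ for all $i<n$. To pass from $qs'$ back to $qs$ I undo the transposition: set $f:=f'\circ\tau_{j,m'}$ and $g:=\tau_{j,m'}\circ g'$. By Lemma~\ref{Lem:PmsTransp} we have $\Pms_n(\tau_{j,m'},\tau_{j,m'})$ (note $j,m'<m=n$), so \texttt{PmsConcat} of Lemma~\ref{Lem:PmsProps} gives $\Pms_n(f,g)$; and since $\tau_{j,m'}$ is an involution, $ps(f\,i)=ps\bigl(f'(\tau_{j,m'}(i))\bigr)=qs'(\tau_{j,m'}(i))=qs\bigl(\tau_{j,m'}(\tau_{j,m'}(i))\bigr)=qs(i)$ for every $i<n$. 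This supplies the witnesses $f,g$ and closes the induction.

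The main obstacle is not a single deep step but the accumulated bookkeeping: orienting $\tau_{j,m'}$ so that the same transposition both brings the matched factor into the last slot of $qs$ and, composed on the other side, carries the hypothesis permutation back to $qs$ rather than $qs'$; handling the degenerate case $j=m'$, where $\tau_{j,m'}$ is the identity; and making explicit the small arithmetic facts that $ps(n')>1$ licenses the cancellation and that $f'$ fixing $n'$ extends the pointwise equality to the top index. On paper these are routine, but in the formalisation each must be discharged in full — which is precisely where the proof assistant pulls its weight.
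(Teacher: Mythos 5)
Your proof is correct and follows essentially the paper's strategy: induction on $n$, locating the matched prime via Theorem~\ref{Thm:PrimeDivProdPrimesToInPrimes}, moving it to the last slot by a transposition, cancelling, applying the induction hypothesis, and reassembling the permutation with \texttt{PmsSucc}/\texttt{PmsTransp}/\texttt{PmsConcat}. The one structural difference is symmetric rather than substantive: you transpose on the $qs$ side (matching $ps(n')$ into $qs$ and applying the hypothesis to $(ps,qs\circ\tau_{j,m'})$), whereas the paper transposes on the $ps$ side (matching $qs(m)$ into $ps$, setting $rs:=ps\circ\tau_{l,n}$, and applying the hypothesis to $(rs,qs)$); the paper also inserts a preliminary case split via Euclid's Lemma~\ref{Lem:PrimeToIrred} to dispose of the case $qs(m)\mid ps(n)$ without any transposition, which you avoid by letting $\tau_{j,m'}$ degenerate to the identity when $j=m'$ --- a mild simplification at the cost of a slightly different extracted term.

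One correction to your opening remark: the quantifiers $\exd_f\exl_g$ in the conclusion are the \emph{computational} existential quantifiers, so the theorem does carry computational content --- the paper extracts precisely the pair $(f,g)$ as the term $\operatorname{genPms}$ and analyses its behaviour at length. Only the inner conjunction $\Pms_n(f,g)\andnc\forall_{i<n}ps(f\,i)=qs(i)$ is non-computational. Your proof survives this because the witnesses $f=f'\circ\tau_{j,m'}$ and $g=\tau_{j,m'}\circ g'$ are given explicitly, but you cannot in general ``reason freely about permutations without tracking an extracted term'' here; the choice of where the transposition is composed directly determines the algorithm that is extracted.
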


\begin{proof}
We proceed by induction on $n$.

For $n=0$, we have $1=\prod_{i<n}ps(i)=\prod_{i<m}qs(i)$, which is only possible if $m=0$. Hence, choosing $f=g=\lambda_n n$ proves the claim.

Assume the statement holds for some $n$ and let $m', ps, qs$ be given such that
\begin{align*}
\Primes{ps}{\Suc n},\quad \Primes{qs}{m'}\quad \text{and}\quad \prod_{i<\Suc n}ps(i)=\prod_{i<m'}qs(i).
\end{align*}
If $m'=0$, then $1 < ps(n) \leq \prod_{i<\Suc n}ps(i) = \prod_{i<m'}qs(i) = 1$, which is a contradiction. Thus, $m' = \Suc m$ for some $m$.
In particular, we have
\begin{align*}
qs(m) \Bigm| \prod_{i<\Suc m}qs(i)=\left(\prod_{i<n}ps(i)\right) \cdot ps(n).
\end{align*}
By Lemma \ref{Lem:PrimeToIrred}, we conclude that either
\begin{align}\label{Form:DivCases}
qs(m) \Bigm| \prod_{i<n}ps(i)\quad \text{or}\quad qs(m) \mid ps(n).
\end{align}
In the latter case, since both numbers are prime, it follows that $qs(m) = ps(n)$. Thus,
\begin{align*}
\prod_{i<n}ps(i) = \prod_{i<m}qs(i),
\end{align*}
which allows us to apply the induction hypothesis to $m, ps, qs$ and thus complete the proof in this case.

Hence we may assume $qs(m) \bigm| \prod_{i<n}ps(i)$. By Theorem \ref{Thm:PrimeDivProdPrimesToInPrimes}, there exists some $l<n$ such that $ps(l)=qs(m)$. We define
\begin{align}
\label{Form:rs}
rs:=ps\circ\tau_{l,n},
\end{align}
and by Lemma \ref{Lem:ProdInvTranspAux} we obtain
\begin{align*}
\left(\prod_{i<n}rs(i)\right) \cdot ps(l) = \prod_{i<\Suc n}rs(i) = \prod_{i<\Suc n}ps(i) = \prod_{i<\Suc m}qs(i) = \left(\prod_{i<m}qs(i)\right) \cdot qs(m).
\end{align*}
Since $ps(l) = qs(m)$, we conclude that
\begin{align*}
\prod_{i<n}rs(i) = \prod_{i<m}qs(i).
\end{align*}
Furthermore, $\Primes{qs}{m}$ follows from $\Primes{qs}{m'}$, and $\Primes{rs}{n}$ follows from $\Primes{ps}{n}$ and Lemma~\ref{Lem:PrimesInvTranspAux}. Thus, applying the induction hypothesis to $m$, $rs$, and $qs$, we obtain $m=n$ and functions $f,g$ with
\begin{align}
\label{Eq:PrimeFactorisationsToPms1}
\Pms_n(f,g) \quad \text{and} \quad \forall_{i<n} rs(f\,i) = qs(i).
\end{align}
Since $\Suc n = \Suc m = m'$, it remains to construct functions $f', g'$ such that $\Pms_{\Suc n}(f',g')$ and $\forall_{i<\Suc n} ps(f'\,i) = qs(i)$. For this we define
\begin{align*}
f':= \tau_{l,n} \circ f, \qquad g':= g \circ \tau_{l,n}.
\end{align*}
By $\Pms_n(f,g)$ and Lemmas \ref{Lem:PmsProps} and \ref{Lem:PmsTransp}, we obtain $\Pms_{\Suc n}(f',g')$. Finally, for $i<\Suc n$, we show that $ps(f'\,i) = qs(i)$.
If $i<n$, then
\begin{align*}
ps(f'i) = ps(\tau_{l,n}(f\,i)) = rs(f\, i) = qs(i)
\end{align*}
by Formula \eqref{Form:rs} and Formula \eqref{Eq:PrimeFactorisationsToPms1}.
Furthermore, by $\Pms_n(f,g)$ and Lemma \ref{Lem:PmsProps} (specifically \texttt{PmsIdOut}), we have $f(n) = n$. Thus,
\begin{align*}
ps(f'\,n) = ps(\tau_{l,n}(n)) = ps(l) = qs(m).
\end{align*}
This completes the proof.
\end{proof}
\paragraph{The extracted term.}
The type of the extracted term is given by
\begin{align*}
\operatorname{genPms} : \NN \to \NN \to (\NN \to \PP) \to (\NN \to \PP) \to 
           (\NN \to \NN)\times(\NN \to \NN).
\end{align*}
For the natural numbers \(n\) and \(m\), and two prime factorisations given by \(ps\) and \(qs\) with length \(n\) and \(m\), respectively, as in the theorem above, the extracted term returns \(f\) and \(g\) with $\Pms_n(f,g)$. If $n=m$ and $ps(0),\dots,ps(n-1)$ and $qs(0),\dots,qs(m-1)$ contains the same numbers, then $ps(f(i)) = qs(i)$ for all $i\in\{0,\dots, n-1\}$.
In \href{https://github.com/FranziskusWiesnet/MinlogArith/blob/1c63c302aa52e8972b594a3ef8753fa11a0435eb/test-files/fta_pos_test.txt#L219}{\texttt{fta\_pos\_test.txt}}, we documented several tests of the extracted term. 

Essentially, the algorithm starts with $qs(m-1)$ and searches for some $i < n$ such that $qs(m-1)$ divides $ps(i)$. In doing so, the search begins almost from the end, as we explain in the following example:  For test reasons we had a look at the case $n=m=10$ and $ps=qs=\lambda_i2$. Therefore we entered 
\begin{verbatim}
let ps n = 2
let (f,g) = genPms 10 10 ps ps
map f [0..9]
\end{verbatim}
and got 
\begin{verbatim}
[1,2,3,4,5,6,7,8,9,0]
\end{verbatim}
as output. In particular, $f(n)=n+1$ for $i<9$ and $f(9)=0$. This is due to the case distinction in Formula (\ref{Form:DivCases}). First, the algorithm considers whether $qs(m)$ divides $\prod_{i<n} ps(i)$, and only if this is not the case, it checks whether $qs(m)$ divides $ps(n)$ instead. In the first case, the extracted term from Theorem \ref{Thm:PrimeDivProdPrimesToInPrimes} is applied, which indeed performs a search for an $i$ with $qs(m)\mid ps(i)$. Note that only $qs(m)\mid ps(i)$ is checked, not whether $qs(m)=ps(i)$. Hence, the following input leads to the same output as before:
\begin{verbatim}
let ps n = 4
let qs n = 2
let (f,g) = genPms 10 10 ps qs
map f [0..9]
\end{verbatim}
However, if we swap $ps$ and $qs$ by entering 
\begin{verbatim}
let ps n = 2
let qs n = 4
let (f,g) = genPms 10 10 ps qs
map f [0..9]
\end{verbatim}
we obtain a completely different output:
\begin{verbatim}
[0,2,3,4,5,6,7,8,9,1]
\end{verbatim}
Although the first case in Formula (\ref{Form:DivCases}) holds and therefore the extracted term of Theorem \ref{Thm:PrimeDivProdPrimesToInPrimes} is applied, that term does not find an $i<n$ with $qs(m)\mid ps(i)$ and returns $1$ in this case as it is the smallest $n$ with $qs(m)=4\Bigm|  \prod_{i<\Suc n}ps(i)$. 
If $qs(m)$ is instead a number that does not divide $\prod_{i<\Suc m}ps(i)$ at all, the algorithm stops immediately. Therefore, entering 
\begin{verbatim}
let ps n = 2
let qs n = 3
let (f,g) = genPms 10 10 ps qs
map f [0..9]
\end{verbatim}
leads to the output
\begin{verbatim}
[0,1,2,3,4,5,6,7,8,9].
\end{verbatim}
Even though these were cases where the assumptions were not satisfied, the examples illustrate very clearly how the algorithm operates and how it is linked to the proof. To analyse the runtime a bit further, we also carried out tests in which we considered a random permutation of the first $n\in\{0,50,100,\dots,500\}$ prime numbers in each case. The results are presented in tabular form in Appendix \ref{Sec:AppUniquess}. Since the extracted term is essentially a iterative search algorithm, a polynomial runtime is to be expected --  probably quadratic or cubic to compute a single value $f(i)$, and one degree higher to compute $f(0),\dots,f(n-1)$ as in our tests. Since a number with $n$ binary digits can have at most $n$ prime factors (with multiplicity), the algorithm is presumably efficient relative to the size of the factored number.  However, the measured results do not provide a clear conclusion here. The runtime depends in particular on the sizes of the individual primes and, in particular, on the specific permutation. Therefore, substantially more tests would be required to make a statistically and numerically significant statement here.
\section{Applications of the Fundamental Theorem of Arithmetic}
The following theorem is an important application of the fundamental theorem of arithmetic, which appears in many proofs -- often in the form of Corollary \ref{Cor:FTAapp}.
In our context, this is meant simply as an application of the fundamental theorem of arithmetic, demonstrating how its computational content can be handled in practice. 
\begin{theorem}[\Href{https://github.com/FranziskusWiesnet/MinlogArith/blob/1c63c302aa52e8972b594a3ef8753fa11a0435eb/minlog/examples/arith/fta_nat.scm\#L1422}{NatProdEqProdSplit}, \Href{https://github.com/FranziskusWiesnet/MinlogArith/blob/1c63c302aa52e8972b594a3ef8753fa11a0435eb/minlog/examples/arith/fta_pos.scm\#L1171}{PosProdEqProdSplit}]
\label{Thm:PosProdEqProdSplit}
\begin{align*}
\forall_{p_0,p_1,q_0,q_1}\bigl(\ &p_0\cdot p_1 = q_0\cdot q_1 \to \\ 
&\exd_{r_0,r_1,r_2,r_3}\left( p_0=r_0\cdot r_1 \andnc p_1=r_2\cdot r_3 \andnc q_0=r_0\cdot r_2 \andnc q_1=r_1\cdot r_3\right)\bigr).
\end{align*}
\end{theorem}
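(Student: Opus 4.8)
The plan is to prove the statement by an induction that peels one prime factor off $p_0$ at a time, so that the extracted term becomes a short recursive algorithm. Since the recursion is on a proper divisor of $p_0$ rather than on a structural predecessor, I would first replace the claim by the logically equivalent bounded version
\begin{align*}
\forall_{l}\forall_{p_0,p_1,q_0,q_1}\bigl(\ & p_0 < l \to p_0\cdot p_1 = q_0\cdot q_1 \to \\
& \exd_{r_0,r_1,r_2,r_3}\left( p_0=r_0\cdot r_1 \andnc p_1=r_2\cdot r_3 \andnc q_0=r_0\cdot r_2 \andnc q_1=r_1\cdot r_3\right)\bigr),
\end{align*}
exactly as in the proofs of Theorems \ref{Thm:ExPrimeFac} and \ref{Thm:PosGcdToLinComb}, and then do induction on $l$. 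The case $l=0$ is vacuous.

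For the induction step, given $p_0 < \Suc l$ with $p_0\cdot p_1 = q_0\cdot q_1$, I would first dispose of the case $p_0 = 1$: then $p_1 = q_0\cdot q_1$, and $r_0:=1,\ r_1:=1,\ r_2:=q_0,\ r_3:=q_1$ works. If $p_0 > 1$, set $\pi := \lf p_0$. By Lemma \ref{Lem:LeastFactorPrime} the number $\pi$ is prime, and by Lemma \ref{Lem:LeastFactorProp} we have $1 < \pi$ and $\pi \mid p_0$; hence (by \texttt{PosDivToProd}) there is $p_0'$ with $p_0 = p_0'\cdot\pi$ and $p_0' < p_0 < \Suc l$. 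From $p_0\cdot p_1 = q_0\cdot q_1$ we get $\pi \mid q_0\cdot q_1$, so Euclid's Lemma \ref{Lem:PrimeToIrred} gives $\pi\mid q_0 \orb \pi\mid q_1$, and I would split on this Boolean disjunction.

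In the first branch write $q_0 = q_0'\cdot\pi$; cancelling $\pi$ on both sides of $p_0'\cdot\pi\cdot p_1 = q_0'\cdot\pi\cdot q_1$ (multiplication on $\PP$ is cancellative) yields $p_0'\cdot p_1 = q_0'\cdot q_1$, so the induction hypothesis applied to $p_0',p_1,q_0',q_1$ returns $\bar r_0,\bar r_1,\bar r_2,\bar r_3$ with $p_0'=\bar r_0\bar r_1$, $p_1=\bar r_2\bar r_3$, $q_0'=\bar r_0\bar r_2$, $q_1=\bar r_1\bar r_3$; then $r_0:=\pi\cdot\bar r_0,\ r_1:=\bar r_1,\ r_2:=\bar r_2,\ r_3:=\bar r_3$ works. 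In the second branch write $q_1 = q_1'\cdot\pi$, cancel to get $p_0'\cdot p_1 = q_0\cdot q_1'$, apply the induction hypothesis to $p_0',p_1,q_0,q_1'$, and take $r_0:=\bar r_0,\ r_1:=\pi\cdot\bar r_1,\ r_2:=\bar r_2,\ r_3:=\bar r_3$. In each branch the four required identities follow just by rearranging the products with associativity and commutativity. (Equivalently one could obtain a prime factorisation $p_0=\prod_{i<k}ps(i)$ from Theorem \ref{Thm:ExPrimeFac} and induct on $k$, peeling off $ps(k-1)$ as the prime; this variant genuinely invokes the existence part of the fundamental theorem of arithmetic.)

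The product rearrangements and the passage to the bounded form are routine. The one genuinely fiddly point is the case distinction coming out of Euclid's Lemma: its conclusion is a Boolean disjunction, so after splitting one must, in each branch, turn $\pi\mid q_i$ into an explicit factor, cancel $\pi$, invoke the induction hypothesis, and then verify that the reassembled $r_0,\dots,r_3$ satisfy all four equations simultaneously — precisely the sort of bookkeeping where a formalisation keeps one honest. As for the extracted term, it is, as befits a consequence of the fundamental theorem of arithmetic, an iterated trial division: it repeatedly computes $\lf p_0$ and, for each prime factor obtained, decides by a divisibility test against $q_0$ into which of the four products it should be absorbed; the algorithm is therefore correct but, like the factorisation algorithm behind Theorem \ref{Thm:ExPrimeFac}, far from efficient.
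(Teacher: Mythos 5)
Your proof is correct, but it takes a genuinely different route from the paper. The paper proves this theorem deliberately as an \emph{application of the fundamental theorem of arithmetic}: it computes prime factorisations of all four numbers $p_0,p_1,q_0,q_1$ via Theorem \ref{Thm:ExPrimeFac}, concatenates them into two factorisations of $p_0\cdot p_1=q_0\cdot q_1$, invokes the uniqueness theorem (Theorem \ref{Thm:PrimeFactorisationsToPms}) to obtain a permutation $f$ with inverse $g$ matching the two factor lists, and then defines $r_0,\dots,r_3$ as partial products of the $ps$-factors filtered according to whether $g(i)<n_0$ and whether $i<m_0$. Your argument bypasses the permutation machinery entirely: a bounded induction on $p_0$, peeling off $\pi:=\lf p_0$, routing $\pi$ into $q_0$ or $q_1$ by Euclid's Lemma \ref{Lem:PrimeToIrred}, cancelling, and recursing. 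All four identities check out in both branches, and the ingredients you need (cancellativity of multiplication on $\PP$, \texttt{PosDivToProd} to turn $\pi\mid q_i$ into an explicit cofactor, decidability of $\pi\mid q_0$ via $\gcd$) are all available. What your version buys is a shorter proof with weaker prerequisites --- it needs neither the uniqueness theorem nor the permutation apparatus of Lemmas \ref{Lem:PmsProps}--\ref{Lem:PrimesInvTranspAux} --- and a leaner extracted term that factors only $p_0$ and performs one $\gcd$-based divisibility test per prime factor, instead of factoring all four inputs and then computing a permutation; both algorithms remain dominated by the exponential cost of trial division. What it loses is precisely the paper's stated purpose for this theorem, namely to demonstrate how the computational content of the uniqueness part of the FTA can be put to use. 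Two small points to tidy up in a formalisation: for the natural-number version you must either exclude $p_0\cdot p_1=0$ or treat it separately (on $\PP$ this does not arise), and the cancellation step on $\NN$ needs $\pi>0$, which $\pi=\lf p_0>1$ supplies.
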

\begin{proof}
Let \((ps_0, m_0), (ps_1, m_1), (qs_0, n_0)\), and \((qs_1, n_1)\) be the prime factorisations of \(p_0, p_1, q_0\), and \(q_1\), respectively. In particular, we have
\begin{align*}
\prod_{i<m_0} ps_0(i) &= p_0, & \prod_{i<m_1} ps_1(i) &= p_1, \\
\prod_{i<n_0} qs_0(i) &= q_0, & \prod_{i<n_1} qs_1(i) &= q_1.
\end{align*}
We now define the sequence \(ps\) by
\[
ps(i) = 
\begin{cases}
ps_0(i) & \text{for } i < m_0, \\
ps_1(i - m_0) & \text{for } m_0 \leq i < m_0 + m_1.
\end{cases}
\]
Similarly, we define the sequence \(qs\) as
\[
qs(i) = 
\begin{cases}
qs_0(i) & \text{for } i < n_0, \\
qs_1(i - n_0) & \text{for } n_0 \leq i < n_0 + n_1.
\end{cases}
\]
Then, we obtain  
\[
\prod_{i<m_0+m_1} ps(i) = p_0\cdot p_1 = q_0\cdot q_1 = \prod_{i<n_0+n_1} qs(i).
\]
Since these are two prime factorisations of the same number, Theorem~\ref{Thm:PrimeFactorisationsToPms} implies that \(m_0 + m_1 = n_0 + n_1 =: N\) and that there exist permutations \(f\) and \(g\) such that \(\Pms_N(f, g)\) holds and
\[
\forall_{i < N} \quad ps(f(i)) = qs(i).
\]
Now, we define the sequences $rs_0$ and $rs_1$ by
\begin{align*}
rs_0(i) &:= 
\begin{cases}
ps(i) & \text{if } g(i) < n_0, \\
1 & \text{otherwise},
\end{cases} \\
rs_1(i) &:= 
\begin{cases}
ps(i) & \text{if } n_0 \leq g(i), \\
1 & \text{otherwise}.
\end{cases}
\end{align*}
Then, using formally \texttt{PosProdSeqFilterProp0}, we obtain:
\begin{align*}
\prod_{i<N} rs_0(i) &= \prod_{\substack{i<N \\ g(i) < n_0}} ps(f(g\ i)) = \prod_{\substack{i<N \\ g(i) < n_0}} qs(g\, i) \\
&= \prod_{i<n_0} qs(i) = q_0,
\end{align*}
and similarly, using formally \texttt{PosProdSeqFilterProp1}, 
\begin{align*}
\prod_{i<N} rs_1(i) &= \prod_{\substack{i<N \\ n_0 \leq g(i)}} ps(f(g\, i)) = \prod_{\substack{i<N \\ n_0 \leq g(i)}} qs(g\, i) \\
&= \prod_{n_0 \leq i < n_0+n_1} qs(i) = q_1.
\end{align*}
Furthermore, we have:
\begin{align*}
\prod_{i<m_0} rs_0(i) \cdot \prod_{i<m_0} rs_1(i) &= \prod_{\substack{i<m_0 \\ g(i)<n_0}} ps(i) \cdot \prod_{\substack{i<m_0 \\ n_0 \leq g(i)}} ps(i) \\
&= \prod_{i<m_0} ps(i) = p_0,
\end{align*}
and analogously:
\begin{align*}
\prod_{\substack{m_0\leq i \\ i<m_0+m_1}} rs_0(i) \cdot \prod_{\substack{m_0\leq i \\ i<N}} rs_1(i) &= \prod_{\substack{m_0\leq i \\ i<N \\ g(i)<n_0}} ps(i) \cdot \prod_{\substack{m_0\leq i \\ i<N \\ n_0 \leq g(i)}} ps(i) \\
&= \prod_{\substack{m_0\leq i \\ i<N}} ps(i) = p_1.
\end{align*}
We now define: 
\begin{align*}
r_0 &:= \prod_{i<m_0} rs_0(i), & r_1 &:= \prod_{i<m_0} rs_1(i), \\
r_2 &:= \prod_{\substack{m_0\leq i \\ i<N}} rs_0(i), & r_3 &:= \prod_{\substack{m_0\leq i \\ i<N}} rs_1(i),
\end{align*}
which satisfy the desired properties as established by the four equations above.
\end{proof}

\paragraph{The extracted term.}
The proof of the theorem required a prime factorisation four times which then are compared sorted by the computational content of the uniqueness of the prime factorisation. Therefore, the computational content of the theorem is quite inefficient. Tests given in \href{https://github.com/FranziskusWiesnet/MinlogArith/blob/1c63c302aa52e8972b594a3ef8753fa11a0435eb/test-files/fta_pos_test.txt#L403}{\texttt{fta\_pos\_test.txt}}, but they also show that the extracted term is quite usable for small examples.
The function is defined as  
\begin{align*}
\operatorname{prodSplit}: \PP \to \PP \to \PP \to  \PP \to \PP\times\PP\times\PP\times\PP,
\end{align*}  
and takes four numbers \( p_0, p_1, q_0, q_1 \) as input, returning four numbers \( r_0, r_1, r_2, r_3 \) that satisfy the properties stated in the theorem above.
When executing  
\begin{verbatim}
prodSplit 7921 676 2314 2314
\end{verbatim}  
in the generated Haskell program, the output \texttt{((89,(89,(26,26)))} is returned almost instantaneously (0.10 seconds in our test).
However, if we use numbers with a few more digits, obtaining a result takes noticeably longer. For example executing 
\begin{verbatim}
prodSplit 37921088150 671104993 22439775070 1134103685
\end{verbatim} 
takes already 78.09 seconds in our test to produce \texttt{(211690,(179135,(106003,6331)))} as output.

But even though this theorem itself has inefficient computational content, it is still very useful, especially when we use it to prove a statement that has no computational content, such as the following corollary:
\begin{corollary}[\Href{https://github.com/FranziskusWiesnet/MinlogArith/blob/1c63c302aa52e8972b594a3ef8753fa11a0435eb/minlog/examples/arith/fta_nat.scm\#L1572}{NatGcdEqOneDivProdToDiv}, \Href{https://github.com/FranziskusWiesnet/MinlogArith/blob/1c63c302aa52e8972b594a3ef8753fa11a0435eb/minlog/examples/arith/fta_pos.scm\#L1309}{PosGcdEqOneDivProdToDiv}]
\label{Cor:FTAapp}
\begin{align*}
\forall_{p,q_0,q_1}\left( \gcd(p,q_0)=1 \to p\mid q_0\cdot q_1 \to p\mid q_1\right)
\end{align*}
\end{corollary}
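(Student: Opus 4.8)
The plan is to reduce the corollary directly to Theorem~\ref{Thm:PosProdEqProdSplit}, exactly as the remark preceding the corollary suggests: the conclusion $p \mid q_1$ is a decidable (Leibniz) equality and hence non-computational, so the computationally expensive splitting theorem may be used here as a purely logical tool without any efficiency concerns. First I would unfold the hypothesis $p \mid q_0 \cdot q_1$ via \texttt{PosDivToProd} to obtain some $r$ with $r \cdot p = q_0 \cdot q_1$. Reading this as $p \cdot r = q_0 \cdot q_1$ and applying Theorem~\ref{Thm:PosProdEqProdSplit} with $p_0 := p$, $p_1 := r$, and the given $q_0, q_1$, I obtain numbers $r_0, r_1, r_2, r_3$ with
\begin{align*}
p = r_0 \cdot r_1, \qquad r = r_2 \cdot r_3, \qquad q_0 = r_0 \cdot r_2, \qquad q_1 = r_1 \cdot r_3.
\end{align*}

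From $p = r_0 \cdot r_1$ and $q_0 = r_0 \cdot r_2$ I read off $r_0 \mid p$ and $r_0 \mid q_0$ via \texttt{PosProdToDiv}, hence $r_0 \mid \gcd(p, q_0)$ by \texttt{PosDivGcd} from Lemma~\ref{Lem:GcdChar}. Since $\gcd(p, q_0) = 1$ by hypothesis, this gives $r_0 \mid 1$; together with $1 \mid r_0$ (immediate from the computation rule $\gcd(1, r_0) = 1$) and antisymmetry of divisibility (\texttt{PosDivAntiSym} from Lemma~\ref{Lem:DivPreOrd}) I conclude $r_0 = 1$. Substituting back yields $p = r_1$, and therefore $q_1 = r_1 \cdot r_3 = r_3 \cdot p$, so $p \mid q_1$ by \texttt{PosProdToDiv}. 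The same argument works, mutatis mutandis, for the natural-number version, using \texttt{NatProdEqProdSplit} in place of Theorem~\ref{Thm:PosProdEqProdSplit} and the corresponding natural-number lemmas on divisibility and the greatest common divisor.

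There is no deep obstacle here once one recognises that Theorem~\ref{Thm:PosProdEqProdSplit} is the right hammer; the only points requiring a little care are the routine bookkeeping with commutativity and associativity of multiplication in the four equations above, and the step $r_0 \mid 1 \Rightarrow r_0 = 1$, which on the positive binary numbers must be argued through the gcd-based definition of divisibility rather than by an appeal to a nonexistent $0$. A self-contained alternative would be to imitate the proof of Euclid's Lemma (Lemma~\ref{Lem:PrimeToIrred}), invoking Bézout's identity (Theorem~\ref{Thm:PosGcdToLinComb}) for $\gcd(p, q_0) = 1$, but that route requires a four-case analysis, whereas the reduction to Theorem~\ref{Thm:PosProdEqProdSplit} is essentially a one-liner modulo the splitting theorem — presumably the reason the corollary is placed immediately after it.
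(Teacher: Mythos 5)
Your proposal is correct and follows exactly the paper's own route: unfold $p \mid q_0\cdot q_1$ to a product equation, apply Theorem~\ref{Thm:PosProdEqProdSplit}, conclude $r_0 = 1$ from $\gcd(p,q_0)=1$, and read off $p = r_1 \mid r_1\cdot r_3 = q_1$. Your elaboration of the step $r_0 = 1$ via \texttt{PosDivGcd} and antisymmetry just fills in a detail the paper leaves implicit.
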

\begin{proof}
Since $p\mid q_0\cdot q_1$, there exists some $p_0$ such that  
$$ p\cdot p_0 = q_0\cdot q_1. $$  
By Theorem \ref{Thm:PosProdEqProdSplit}, there exist positive numbers $r_0, r_1, r_2, r_3$ such that  
\begin{equation*}
\begin{array}{@{\;\;}r@{\;}l@{\;\;}r@{\;}l@{\;\;}}
p   & = r_0\cdot r_1,\qquad  & p_0 & = r_2\cdot r_3, \\  
q_0 & = r_0\cdot r_2,\qquad  & q_1 & = r_1\cdot r_3.  
\end{array}
\end{equation*}
Since $\gcd(p, q_0) = 1$ is given, it follows that $r_0 = 1$.  
Thus, we obtain $p = r_1$, and consequently,  
$
 p\mid r_1\cdot r_3 = q_1. \hfill 
$
\end{proof}

As one can easily verify, the corollary above is a generalisation of Lemma \ref{Lem:PrimeToIrred}.
\section{Fermat's Factorisation Method}
\label{Sec:Fermat}
In the existence proof of prime factorisation and the definition of a prime number itself, we have already seen that factoring a number is very inefficient. In fact, finding an efficient factorisation algorithm remains a well-known open problem.

In this article, by defining the smallest factor via $\lf$, we have searched for a factor by simply testing from the bottom up. This method is known as trial division.

To demonstrate the potential of formalising number theory in proof assistants such as Minlog, we would like to present another factorisation algorithm: Fermat's factorisation method. Although this method is not more efficient than trial division, it starts the search from a different point, i.e.~the integral square root. Therefore, this method is especially effective for numbers with factors close to each other.
Furthermore, in the next section, we will outline that this method can be generalised to the quadratic sieve method, which is one of the most efficient factorisation algorithms currently known.

Fermat's factorisation method is based on the formula $x^2-y^2=(x+y)\cdot (x-y)$. If one finds a representation of a number as a difference of squares, it directly yields a factorisation. If neither of the two factors is 1, this results in the desired decomposition. As we have seen, the computation of the integer square root function is already very efficient, which therefore also applies to checking whether a number is a perfect square. In particular, the property for a positive binary number $p$ being a perfect square is defined as follows:
\begin{align*}
\operatorname{IsSq}\ p\quad  :=\quad  (\mathtt{FastSqrt}\ {p} )^2 = p
\end{align*}
Note that we use the function \href{https://github.com/FranziskusWiesnet/MinlogArith/blob/1c63c302aa52e8972b594a3ef8753fa11a0435eb/minlog/examples/arith/factor_pos.scm#L17}{\texttt{FastSqrt}} from Definition \ref{Def:FastSqrt} in this section instead of $\lfloor\sqrt{\cdot} \rfloor$, since it has a shorter runtime and our extracted algorithm relies on many calls to the square-root function.

We first show that we do not search in vain when dealing with odd numbers. That means, if an odd number is factorable, this search will always yield a result. Conversely, this means that if the search is unsuccessful, the number must be a prime.
\begin{lemma}[\Href{https://github.com/FranziskusWiesnet/MinlogArith/blob/1c63c302aa52e8972b594a3ef8753fa11a0435eb/minlog/examples/arith/factor_nat.scm\#L47}{NatOddProdToDiffSq}]
\begin{align*}
\forall_{n}\forall_{l_0,l_1>1}\biggl(\ &\operatorname{NatEven} (\Suc n) \to n=l_0\cdot l_1 \to\\ &\exnc_{m_0,m_1}\left(\ m_0<m_1<\frac{n-1}{2}\andnc n=m_1^2-m_0^2\right) \biggr)
\end{align*}
\end{lemma}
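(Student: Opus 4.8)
The plan is to produce explicit witnesses $m_0, m_1$ built directly from the factors $l_0$ and $l_1$, exploiting Fermat's difference-of-squares identity $m_1^2 - m_0^2 = (m_1+m_0)(m_1-m_0)$. Since the conclusion is the non-computational existential $\exnc$, there is no need to track computational content here.

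First I would use $\operatorname{NatEven}(\Suc n)$ to conclude that $n$ is odd, and hence — since $n = l_0\cdot l_1$ and a product with an even factor is even — that both $l_0$ and $l_1$ are odd. Combined with $l_0, l_1 > 1$, this lets me write $l_0 = 2a+1$ and $l_1 = 2b+1$ with $a, b \geq 1$. By a case distinction on whether $a \leq b$ (the two cases being symmetric in $l_0$ and $l_1$), I may assume $a \leq b$, and then set
\[
m_1 := a + b + 1, \qquad m_0 := b - a.
\]
I would then verify the three required properties. The inequality $m_0 < m_1$ is immediate, since $m_0 = b - a \leq b < a + b + 1 = m_1$. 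For $m_1 < \frac{n-1}{2}$, I compute $n = (2a+1)(2b+1) = 4ab + 2a + 2b + 1$, so $n - 1 = 2(2ab+a+b)$ and the halving is exact, giving $\frac{n-1}{2} = 2ab + a + b$; the claim $a+b+1 < 2ab+a+b$ then reduces to $1 < 2ab$, which holds because $a, b \geq 1$. Finally, $m_1 + m_0 = 2b+1 = l_1$ and $m_1 - m_0 = 2a+1 = l_0$, whence
\[
m_1^2 - m_0^2 = (m_1+m_0)(m_1-m_0) = l_1\cdot l_0 = n.
\]

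The main obstacle is not conceptual but the bookkeeping forced by truncated subtraction and halving on $\NN$: one must ensure that $l_0 + l_1$ and $l_1 - l_0$ are genuinely even, so that the passage to $a, b$ and the on-the-nose identities $m_1 \pm m_0 \in \{l_0, l_1\}$ are valid without any rounding. Parametrising both odd factors as $2a+1$ and $2b+1$ from the start makes all of these steps purely equational, and the case split $a \leq b$ versus $b < a$ absorbs the asymmetry of subtraction. Getting from "$n$ odd" to "$l_0, l_1$ odd" to the $2a+1$ form also requires the relevant parity lemmas for $\operatorname{NatEven}$ and products, but beyond that there is no deeper idea than Fermat's original observation.
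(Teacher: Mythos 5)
Your proposal is correct and follows essentially the same route as the paper: the paper proves \texttt{NatOddProdToDiffSq} by the same argument it spells out for Lemma~\ref{PosOddProdToDiffSqNc}, namely writing the two odd factors as $2a+1$ and $2b+1$ and taking $m_1=a+b+1$, $m_0=b-a$ as witnesses. Your explicit verification of the exact halving of $n-1$ and of the bound $m_1<\tfrac{n-1}{2}$ via $1<2ab$ matches what the formalisation has to do, and your handling of the case $a=b$ (where $m_0=0$) correctly reflects why the natural-number version needs no hypothesis $l_0\neq l_1$, unlike the positive-binary version.
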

\begin{proof}
The proof of this lemma is similar to the proof of the next lemma. For more details we refer to the Minlog code. 
\end{proof}

\begin{lemma}[\Href{https://github.com/FranziskusWiesnet/MinlogArith/blob/1c63c302aa52e8972b594a3ef8753fa11a0435eb/minlog/examples/arith/factor_pos.scm\#L428}{PosOddProdToDiffSq}]
\label{PosOddProdToDiffSqNc}
\begin{align*}
\forall_{p}\forall_{q_0,q_1>1}\bigl(\ &\SOne p = q_0\cdot q_1 \to q_0 \neq q_1 \to\\ 
&\exnc_{p_0,p_1}\left(\ p_0<p_1<p \andnc \SOne p = p_1^2-p_0^2\right)\bigr)
\end{align*}
\end{lemma}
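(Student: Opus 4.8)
The plan is to carry out the classical Fermat construction, writing the odd number $\SOne p$ as a difference of squares by means of the identity $q_0\cdot q_1 = \left(\tfrac{q_0+q_1}{2}\right)^{2} - \left(\tfrac{q_1-q_0}{2}\right)^{2}$. The conclusion is symmetric in $q_0$ and $q_1$ (only $\SOne p = q_0\cdot q_1 = q_1\cdot q_0$ is used), and $<$ is decidable on $\PP$, so I would first reduce to the case $q_0 < q_1$: from $q_0 \neq q_1$ we get $q_0 < q_1$ or $q_1 < q_0$, and in the second case we simply swap the roles of $q_0$ and $q_1$.

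Next I would show that both $q_0$ and $q_1$ are odd. This holds because $\SOne p$ is odd by construction and a product of positive binary numbers is odd exactly when both factors are odd (the parity lemma for $\cdot$ on $\PP$ is available in the Minlog library). Hence $q_0+q_1$ and $q_1-q_0$ are both even, and $q_1-q_0>0$ since $q_0<q_1$ (in fact $q_1-q_0\geq 2$). In the positive binary representation, "being even" means having a trailing $\SZero$, so there are $p_1,p_0$ with $q_0+q_1 = \SZero p_1$ and $q_1-q_0 = \SZero p_0$, i.e. $2p_1 = q_0+q_1$ and $2p_0 = q_1-q_0$. These are the halves we want; they are genuine positive binary numbers, so no appeal to an absent $0$ is needed, since $q_0+q_1\geq 6$ and $q_1-q_0\geq 2$.

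It then remains to check the three conjuncts. From the two halving identities we get $p_1+p_0 = q_1$ and, since $q_0<q_1$ makes truncated subtraction agree with ordinary subtraction, $p_1-p_0 = q_0$; combining $p_1>p_0$ and these identities gives $p_1^{2}-p_0^{2} = (p_1+p_0)\cdot(p_1-p_0) = q_1\cdot q_0 = \SOne p$, which is a short computation. For $p_0<p_1$, note $p_1-p_0 = q_0 \geq 1 > 0$. For the upper bound $p_1<p$: from $q_0<q_1$ we have $2p_1 = q_0+q_1 < 2q_1$, so $p_1<q_1$; and from $q_0>1$, i.e. $q_0\geq 2$, we get $2q_1 \leq q_0\cdot q_1 = \SOne p = 2p+1$, hence $q_1\leq p$; therefore $p_1<q_1\leq p$.

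The main obstacle will be the arithmetic bookkeeping on positive binary numbers rather than anything conceptual: one must phrase "even" as the existence of a trailing $\SZero$ and then push $\SZero$-terms through the identities above, keep track that truncated subtraction on $\PP$ coincides with ordinary subtraction precisely where the inequalities $q_0<q_1$ and $p_0<p_1$ are available, and chain the product-parity lemma together with the ordering and divisibility facts. Because the existential quantifier here is the non-computational $\exnc$, there is no extracted term to optimise, so these estimates can be established with whatever case distinctions are most convenient; for the corresponding natural-number lemma the same construction works with $m_1 = \tfrac{l_0+l_1}{2}$, $m_0 = \tfrac{l_1-l_0}{2}$ and the bound $m_1<\tfrac{n-1}{2}$ following from $l_0\geq 2$.
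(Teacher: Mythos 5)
Your proposal is correct and follows essentially the same route as the paper's proof: reduce to $q_0<q_1$, observe that both factors of the odd number $\SOne p$ must be odd, set $p_0=(q_1-q_0)/2$ and $p_1=(q_1+q_0)/2$, and verify $p_1^2-p_0^2=q_0\cdot q_1$ together with the bounds $p_0<p_1<p$. The only cosmetic differences are that the paper writes $q_i=2r_i+1$ and defines $p_0=r_1-r_0$, $p_1=r_1+r_0+1$ instead of phrasing evenness via a trailing $\SZero$, and it obtains $p_1<p$ from $q_0,q_1>2\Rightarrow q_0,q_1<p$ rather than via your chain $p_1<q_1\leq p$.
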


\begin{proof}
We assume without loss of generality that $q_0<q_1$, as the case $q_1<q_0$ follows analogously.  
Since $\SOne p = 2p+1 = q_0\cdot q_1$ is odd, it follows that both $q_0$ and $q_1$ must also be odd. In particular, we can write  
$$ q_0 = 2r_0+1 \quad \text{and} \quad q_1 = 2r_1+1 $$  
for some $r_0, r_1$ of type $\PP$.  
We now define  
\begin{align*}
p_0 := \frac{q_1-q_0}{2} = r_1 - r_0 > 0, \qquad  
p_1 := \frac{q_1+q_0}{2} = r_1 + r_0 +1 > 0.
\end{align*}
Then, we obtain  
$$ p_1^2 - p_0^2 = q_0\cdot q_1 = \SOne p, $$  
and clearly, $p_0 < p_1$.  

Furthermore, since $q_0, q_1$ are odd and $q_0, q_1 > 1$, it follows that $q_0, q_1 > 2$. Thus, we also have $q_0, q_1 < p$, which implies $p_1 < p$.
\end{proof}

Since, in the following theorem, we only use the above lemma to prove a negative statement, the computational content of the lemma is not relevant. However, one could also consider formulating the lemma with a cr existential quantifier. In that case, the extracted term would essentially be given by the definitions of $p_0$ and $p_1$ as in the proof above.

We now aim to obtain Fermat's factorisation method as an extracted term from the proof of the following theorem. The statement of the theorem is, in essence, weaker than the existence part of the fundamental theorem of arithmetic (Theorem~\ref{Thm:ExPrimeFac}). However, this is a case where the specific proof of the theorem is crucial, as it directly determines the extracted algorithm.

\begin{theorem}[\Href{https://github.com/FranziskusWiesnet/MinlogArith/blob/1c63c302aa52e8972b594a3ef8753fa11a0435eb/minlog/examples/arith/factor_nat.scm\#L239}{NatPrimeOrComposedFermat}, \Href{https://github.com/FranziskusWiesnet/MinlogArith/blob/1c63c302aa52e8972b594a3ef8753fa11a0435eb/minlog/examples/arith/factor_pos.scm\#L631}{PosPrimeOrComposedFermat}]
\label{Thm:Fermat}
\begin{align*}
\forall_{p>1}\left( \Prime{p} \orr \exd_{q_0>1} \exl_{q_1>1}p=q_0\cdot q_1\right)
\end{align*}
\end{theorem}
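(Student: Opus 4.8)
The plan is to structure the proof so that its computational content \emph{is} Fermat's factorisation method. I would begin with a case distinction on the constructor of $p$. If $p=\SZero p'$, then $p=2\cdot p'$, and a further distinction on $p'$ gives either $p=2$ (take the left disjunct and verify $\Prime{2}$ directly) or $p'>1$ (take the right disjunct with $q_0:=2$ and $q_1:=p'$). The only substantial case is $p=\SOne p'$, i.e.\ $p$ odd with $p>1$, and within it I would first dispose of perfect squares: if $\IsSquare{p}$ then $p=(\mathtt{FastSqrt}\ p)\cdot(\mathtt{FastSqrt}\ p)$, and since $p$ is odd and greater than $1$ we have $\mathtt{FastSqrt}\ p>1$ (otherwise $p=1$), so the right disjunct holds. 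Treating squares separately is necessary both because Lemma~\ref{PosOddProdToDiffSqNc} requires distinct factors and because the search below would otherwise need a square root of $0$, which is not a positive binary number.

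Assume now $p=\SOne p'$ and $\neg\,\IsSquare{p}$, so that $s:=\mathtt{FastSqrt}\ p+1$ plays the role of $\lceil\sqrt p\rceil$ and $s^2>p$. Fermat's idea is to search for $x\geq s$ with $x^2-p$ a perfect square, which yields $p=(x+y)(x-y)$ for $y^2=x^2-p$. Working over the natural numbers (with the $\PosToNat$/$\NatToPos$ bookkeeping left implicit), I set
\[
m_0:=\mu_{m<p}\,\IsSquare{(s+m)^2-p},
\]
where the subtraction is harmless as $(s+m)^2\geq s^2>p$; the bound $p$ is generous but convenient (a tighter one would be preferable for efficiency). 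This search cannot fail, because for $m:=\tfrac{p+1}{2}-s$ — a natural number since $\sqrt p\leq\tfrac{p+1}{2}$ — one has $(s+m)^2-p=\bigl(\tfrac{p-1}{2}\bigr)^2$, so $\IsSquare{(s+m)^2-p}$ and $m<p$; hence Lemma~\ref{Lem:LeastNatProp} gives $m_0<p$ together with $\IsSquare{(s+m_0)^2-p}$. Writing $y_0:=\mathtt{FastSqrt}\bigl((s+m_0)^2-p\bigr)$, so $(s+m_0)^2-p=y_0^2$ and $y_0<s+m_0$, we obtain $p=q_0\cdot q_1$ with $q_0:=s+m_0-y_0\geq 1$ and $q_1:=s+m_0+y_0>q_0$. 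If $q_0>1$, then also $q_1>1$, and the right disjunct is proved.

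The hard part is the subcase $q_0=1$, where one must argue that a Fermat search returning only the trivial factorisation certifies primality. By Lemma~\ref{Lem:PrimeChar} it suffices to prove $\forall_{q,q'}(q\cdot q'=p\to q=1\orb q'=1)$, so assume $q\cdot q'=p$ with $q,q'>1$; if $q=q'$ then $p$ is a square, contradicting the case assumption, so without loss of generality $q<q'$, whence both are odd, $q,q'\geq 3$, and $p=qq'\geq 9$. Lemma~\ref{PosOddProdToDiffSqNc} provides $p_0<p_1<p$ with $p=p_1^2-p_0^2$; since then $p_1^2=p+p_0^2>p$, the bound property of $\mathtt{FastSqrt}$ gives $p_1\geq s$, so $m':=p_1-s<p$ is well defined and $(s+m')^2-p=p_0^2$, i.e.\ $\IsSquare{(s+m')^2-p}$, whence $m_0\leq m'$ by minimality. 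On the other hand $q_0=1$ forces $y_0=s+m_0-1$, and substituting into $y_0^2=(s+m_0)^2-p$ yields $p=2(s+m_0)-1$, i.e.\ $s+m_0=\tfrac{p+1}{2}$. Combining these, $\tfrac{p+1}{2}=s+m_0\leq s+m'=p_1$; but $p_1=\tfrac{q+q'}{2}\leq\tfrac{3+p/3}{2}<\tfrac{p+1}{2}$ since $p\geq 9>3$, a contradiction. Hence $q=1$ or $q'=1$, so $p$ is prime and the left disjunct holds. The remaining details — exactness of the divisions, non-negativity of the subtractions, and the conversions between $\PP$ and $\NN$ — are routine and I would relegate them to the Minlog formalisation.
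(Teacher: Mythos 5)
Your proposal is correct and implements the same algorithm with the same key ingredient (Lemma~\ref{PosOddProdToDiffSqNc}), but it organises the final dichotomy differently from the paper. The paper's proof, writing $p=\SOne q=2q+1$, bounds the Fermat search as $l:=\mu_{\mathtt{FastSqrt}\,p\leq i<q}\,\IsSquare{i^2-p}$; since Lemma~\ref{PosOddProdToDiffSqNc} places the witness of any \emph{nontrivial} factorisation strictly below $q$, while the trivial solution sits at $i=q+1$ outside the search range, the dichotomy is simply ``$l=q$ (search fails, hence prime)'' versus ``$l<q$ (the found factorisation is automatically nontrivial, checked by a short $l-r\neq 1$ argument)''. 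You instead use the generous bound $p$, observe that the search always succeeds at $m=\tfrac{p+1}{2}-s$ at the latest, and branch on whether the returned cofactor $q_0$ equals $1$; the primality certificate in the trivial branch then needs the minimality clause of the $\mu$-operator (Lemma~\ref{Lem:LeastNatProp}) together with the estimate $p_1=\tfrac{q+q'}{2}\leq\tfrac{3+p/3}{2}<\tfrac{p+1}{2}$, which is a somewhat heavier argument than the paper's. What your version buys is that the search range is never empty, so you avoid the paper's special-casing of $q\leq 2$ (i.e.\ $p\in\{3,5\}$); what the paper's version buys is a cleaner primality certificate read off directly from search failure, and an extracted term whose $\mu$-bound is tight rather than the loose bound $p$ (harmless at runtime, since your search also stops by $\tfrac{p+1}{2}$, but the tighter bound makes the termination behaviour visible in the term itself). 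Both proofs check the same side conditions (perfect-square case, positivity of the subtractions), and your contradiction in the $q_0=1$ branch is arithmetically sound.
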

\begin{proof}
For $p>1$, we have $p=\SZero q$ or $p=\SOne q$. In the first case, either $p=2$, which implies $\Prime{p}$, or $p = 2\cdot q$ with $q>1$. In both cases, the statement holds.  

Now, let us consider the case where $p=\SOne q = 2q +1$. If $p=(\mathtt{FastSqrt}\ p)^2$, the proof is complete. Therefore, we assume $(\mathtt{FastSqrt}\ p)^2 < p$.

Furthermore, if $q\leq 2$, then $p$ is either $3$ or $5$, both of which are prime. Thus, we also assume $q>2$, which implies $\lfloor\sqrt p \rfloor<q$ (proven as \texttt{FastSqrtSOneBound} in Minlog).  

Next, we define  
$$
l:= \mu_{(\mathtt{FastSqrt}\ p)   \leq i < q}\left(\IsSquare{i^2-p}\right).
$$
If $l=q$, then there is no $i<q$ such that $\mathtt{FastSqrt}\ p\leq i$ and $\IsSquare{i^2-p}$. Since $\mathtt{FastSqrt}\ p\leq i$ is a necessary condition for $i^2-p\geq 0$, it follows that no $i<q$ exists with $\IsSquare{i^2-p}$.  
By Lemma \ref{PosOddProdToDiffSqNc} and the assumption that $p$ is not a square, there exist no $q_0,q_1>1$ such that $p=q_0\cdot q_1$, which implies that $\Prime p$.  

Thus, we assume that $l<q$. Then there exists some $r$ such that $l^2-p=r^2$, i.e.,  
$$p=l^2-r^2=(l+r)\cdot (l-r).$$
As $p$ is not zero, we have $l-r \neq 0$. It remains to show that $l-r\neq 1$. Suppose, for contradiction, that $l-r=1$. Then we have $l=r+1$ and consequently $2q+1=p=2r+1$. This implies $q=r$, which leads to $l=r+1>q$, contradicting to the definition of $l$.
\end{proof}
\paragraph{The extracted term.}
Generally, the extracted algorithm, which we call \emph{Fermat algorithm} in the following, is inefficient, as it again relies on a bounded search, one that examines even more numbers than trial division. However, its runtime is short for numbers whose factors are close to each other. 
Thus, Fermat’s algorithm is essentially complementary to trial division, which is done by the function $\lf$ and the computational content of Theorem \ref{Thm:ExPrimeFac}.

Tests using the extracted Haskell program given in \href{https://github.com/FranziskusWiesnet/MinlogArith/blob/main/test-files/factor_pos.hs}{\texttt{factor\_pos.hs}} are documented in \href{https://github.com/FranziskusWiesnet/MinlogArith/blob/main/test-files/factor_pos_test.txt}{\texttt{factor\_pos\_test.txt}}.

While products of twin primes had the longest runtime for the extracted term from Theorem 5, as shown in Appendix A.5, Fermat’s algorithm needs only a single attempt for such inputs and can therefore factor them instantly. Even  the product of the two large twin primes  $299\,686\,303\,457$ and $299\,686\,303\,459$ were decomposed in 0.01 seconds.

To verify that those two numbers are prime, however, the Fermat algorithm takes far too long, so we did not execute this computation.
Even verifying that the number $89\,917$ is a prime took 101.20 seconds, whereas the extracted term of Theorem \ref{Thm:ExPrimeFac} only needed 0.06 seconds.

A noteworthy result, however, is that the Fermat algorithm is actually able to factorise the number 
$$810\,450\,000\,160\,224\,500\,006\,321=900\,500\,000\,129\cdot 900\,000\,000\,049$$
in about one minute (62.08 seconds in our test)  even though the individual factors are quite far apart.
Using trial division, we had no chance to factorise this number within a reasonable time.

To determine whether using \texttt{FastSqrt} instead of \texttt{PosSqrt} makes a difference, we replaced \texttt{FastSqrt} by \texttt{PosSqrt} in the definition of the extracted Haskell term fermat, thereby obtaining a new term \texttt{fermatPosSqrt}. The resulting Haskell program is saved as \href{https://github.com/FranziskusWiesnet/MinlogArith/blob/main/test-files/factor_pos_added.hs}{\texttt{factor\_pos\_added.hs}}. We then carried out all tests of the Fermat algorithm using \texttt{fermatPosSqrt} as well, and documented the results it in the same file.
It turns out that \texttt{fermat} is indeed faster than \texttt{fermatPosSqrt}, but the difference is very small. 

Factoring the number 
$810\,450\,000\,160\,224\,500\,006\,321$  was only marginally slower with the function \texttt{fermatPosSqrt}, taking 62.98 seconds. Likewise, determining that 89\,917 is prime did not take substantially longer either, with the modified algorithm requiring 102.40 seconds.
Nevertheless, in every test the Fermat algorithm was consistently faster, so using \texttt{FastSqrt} instead of \texttt{PosSqrt} is a clear improvement. Such small improvements can also accumulate quickly, so building on this article one may be able to find further optimisations for the algorithms extracted here. This brings us to the concluding section of this article.
\section{Outlook}
In this article, we have explored how program extraction from proofs in elementary number theory can be effectively realised within the Minlog system. To conclude, we outline a few directions  in which this foundational work might be extended. These arise in particular also with regard to work in other proof assistants, as shown in Section \ref{Sec:Novelties}.
\subsection{Subexponential Factoring Algorithms}
The two factorisation algorithms we have considered -- trial division and Fermat's algorithm -- have exponential runtime but are conceptually simple.  A natural next step would be to extend our approach to more complex yet more efficient factorisation algorithms. A well-structured overview of subexponential factorisation methods can be found in \cite{crandall2005prime, forster2015algorithmische, menezes2018handbook}. To the best of our knowledge, no subexponential factorisation method has yet been implemented in a proof assistant.

One particularly interesting method is the quadratic sieve, introduced by Carl Pomerance \cite{pomerance1982analysis}, as it builds upon Fermat's factorisation method. Consequently, a logical extension of this work would be to formalise a theorem in Minlog such that the extracted term corresponds to the quadratic sieve algorithm. In addition to Theorem \ref{Thm:Fermat}, Corollary \ref{Cor:FTAapp} also provides a solid foundation and is expected to play a role in the proof.

However, a key challenge lies in the fact that the quadratic sieve requires solving linear systems of equations. While this is not particularly difficult in itself, the necessary theoretical framework has not yet been implemented in Minlog. Therefore, this groundwork must be established before the quadratic sieve can be implemented.
\subsection{Primality Tests}
Primality tests are closely related to factorisation algorithms and are also of significant practical interest, and in contrast to factorisation algorithms, primality tests are present in some proof assistants, e.g.~\cite{caprotti2001formal,hurd2003verification, thery2007primality}. These tests determine only whether a number is prime or composite; however, in the case of a composite number, they do not necessarily reveal its factors.

Due to their speed, probabilistic primality tests (\cite[Section 3.5]{crandall2005prime}, \cite[Chapter 12]{forster2015algorithmische}) are widely used in practice. A number that passes these tests is only prime with high probability. On the other hand, a number that fails these tests is certainly composite.
One of the  most well-known examples is the Miller-Rabin test \cite{hurd2003verification,miller1975riemann,rabin1980probabilistic}.
From a computational perspective, expressing the statement that a given number is prime with a certain probability as an extracted term is challenging yet intellectually stimulating. In particular, the realisability predicate should be extended to incorporate a notion of probability.

\subsection{Development of a Computer Algebra System with Verified Algorithms}
Throughout this article, we have also examined the extracted terms as Haskell programs. Rather than considering these programs individually, it would be particularly interesting to integrate them into a modular computer algebra system that, unlike other such systems, operates exclusively using verified algorithms.

To automatically generate a Haskell file containing all extracted programs, it would likely be necessary to implement an additional function in Minlog that directly stores the extracted term in a Haskell file during its creation.

In the context of arithmetic, a comparison with Aribas \cite{forster2024aribas}, an interactive interpreter for big integer arithmetic and multi-precision floating-point arithmetic developed by Otto Forster, would be valuable. This system includes implementations of various factorisation algorithms and primality tests, albeit without program extraction from proofs. Once the program extraction approach presented in this article is further refined, a runtime comparison would be worthwhile.
Of course, many other computer algebra systems could also serve as suitable candidates for comparison.

\subsection{Connection between Formal and Textbook Proofs}
Each of the proofs presented above is based on an implementation in the proof assistant Minlog. The primary purpose of the formalised proofs is to ensure that all proofs have been verified by a computer and that we can apply formal program extraction. Internally, Minlog stores proofs as proof terms.

The textbook-style proofs provided on paper are primarily intended to enhance the reader's understanding.  However, in many cases, we have referred to the Minlog proofs directly and presented only a proof sketch in written form.
Additionally, we have pointed out that, in case of any uncertainties regarding details, the Minlog implementation can serve as a useful reference. This is particularly valuable in Minlog, as its tactic script aims to reconstruct proofs in natural language.

In general, it would be highly interesting to further integrate these two dimensions of proofs, effectively adding depth to the proof structure. Proofs could be stored in a custom file format that, at the lowest level, contain a machine-verifiable proof, while higher levels provide a proof sketch that users can refine as needed by exploring deeper layers of the proof in specific sections.

\appendix
\vspace{1cm}

\bibliographystyle{plainurl}
\bibliography{EuklidMinlog}

\begin{thebibliography}{10}

\bibitem{aigner2010das}
Martin Aigner and Günter~M. Ziegler.
\newblock {\em Das {BUCH} der {B}eweise}.
\newblock Springer Berlin Heidelberg, 2010.
\newblock 3. Auflage.
\newblock \href {https://doi.org/10.1007/978-3-642-02259-3}
  {\path{doi:10.1007/978-3-642-02259-3}}.

\bibitem{alexandru2025intrinsically}
Cass Alexandru, Henning Urbat, and Thorsten Wißmann.
\newblock {I}ntrinsically {C}orrect {A}lgorithms and {R}ecursive {C}oalgebras.
\newblock December 2025.
\newblock \href {https://arxiv.org/abs/2512.10748} {\path{arXiv:2512.10748}},
  \href {https://doi.org/10.48550/ARXIV.2512.10748}
  {\path{doi:10.48550/ARXIV.2512.10748}}.

\bibitem{barkema2019extending}
Joris Barkema.
\newblock {E}xtending {S}tein’s {GCD} algorithm and a comparison to
  {E}uclid’s {GCD} algorithm, 2019.
\newblock {B}achelor’s Thesis.
\newblock URL: \url{https://studenttheses.uu.nl/handle/20.500.12932/33194}.

\bibitem{benl1999formal}
Holger Benl and Helmut Schwichtenberg.
\newblock {F}ormal {C}orrectness {P}roofs of {F}unctional {P}rograms:
  {D}ijkstra’s {A}lgorithm, a {C}ase {S}tudy.
\newblock In U.~Berger and H.~Schwichtenberg, editors, {\em Computational
  Logic}, volume 165 of {\em Series F: Computer and Systems Sciences}, pages
  113--126. Proceedings of the NATO Advanced Study Institute on Computational
  Logic, held in Marktoberdorf, Germany, July 29 -- August 10, 1997, Springer
  Berlin Heidelberg, 1999.
\newblock \href {https://doi.org/10.1007/978-3-642-58622-4_4}
  {\path{doi:10.1007/978-3-642-58622-4_4}}.

\bibitem{berger2006program}
Ulrich Berger, Stefan Berghofer, Pierre Letouzey, and Helmut Schwichtenberg.
\newblock {P}rogram {E}xtraction from {N}ormalization {P}roofs.
\newblock {\em Studia Logica}, 82(1):25--49, February 2006.
\newblock \href {https://doi.org/10.1007/s11225-006-6604-5}
  {\path{doi:10.1007/s11225-006-6604-5}}.

\bibitem{berger2002refined}
Ulrich Berger, Wilfried Buchholz, and Helmut Schwichtenberg.
\newblock Refined program extraction from classical proofs.
\newblock {\em Annals of Pure and Applied Logic}, 114(1–3):3--25, April 2002.
\newblock \href {https://doi.org/10.1016/s0168-0072(01)00073-2}
  {\path{doi:10.1016/s0168-0072(01)00073-2}}.

\bibitem{berger1998normalization}
Ulrich Berger, Matthias Eberl, and Helmut Schwichtenberg.
\newblock {\em {N}ormalization by {E}valuation}, pages 117--137.
\newblock Springer Berlin Heidelberg, 1998.
\newblock \href {https://doi.org/10.1007/3-540-49254-2_4}
  {\path{doi:10.1007/3-540-49254-2_4}}.

\bibitem{berger2011minlog}
Ulrich Berger, Kenji Miyamoto, Helmut Schwichtenberg, and Monika Seisenberger.
\newblock {\em {M}inlog - {A} {T}ool for {P}rogram {E}xtraction {S}upporting
  {A}lgebras and {C}oalgebras}, pages 393--399.
\newblock Springer Berlin Heidelberg, 2011.
\newblock 4th International Conference, CALCO 2011, Winchester, UK, August 30
  – September 2, 2011. Proceedings.
\newblock \href {https://doi.org/10.1007/978-3-642-22944-2_29}
  {\path{doi:10.1007/978-3-642-22944-2_29}}.

\bibitem{berger2016logic}
Ulrich Berger, Kenji Miyamoto, Helmut Schwichtenberg, and Hideki Tsuiki.
\newblock Logic for {G}ray-code {C}omputation.
\newblock In D.~Probst and P.~Schuster, editors, {\em Concepts of Proof in
  Mathematics, Philosophy, and Computer Science}, pages 69--110. De Gruyter,
  July 2016.
\newblock \href {https://doi.org/10.1515/9781501502620-005}
  {\path{doi:10.1515/9781501502620-005}}.

\bibitem{berger1996greatest}
Ulrich Berger and Helmut Schwichtenberg.
\newblock The greatest common divisor: A case study for program extraction from
  classical proofs.
\newblock In Stefano Berardi and Mario Coppo, editors, {\em Types for Proofs
  and Programs}, volume 1158, pages 36--46. Springer Berlin Heidelberg, 1996.
\newblock International Workshop, TYPES ’95 Torino, Italy, June 5–8, 1995
  Selected Papers.
\newblock \href {https://doi.org/10.1007/3-540-61780-9_60}
  {\path{doi:10.1007/3-540-61780-9_60}}.

\bibitem{bjoerck1996numerical}
Ake Björck.
\newblock {\em Numerical methods for least squares problems}.
\newblock Number~51 in Other titles in applied mathematics. Society for
  Industrial and Applied Mathematics, Philadelphia, 1996.

\bibitem{caprotti2001formal}
Olga Caprotti and Martijn Oostdijk.
\newblock {F}ormal and {E}fficient {P}rimality {P}roofs by {U}se of {C}omputer
  {A}lgebra {O}racles.
\newblock {\em Journal of Symbolic Computation}, 32(1–2):55--70, July 2001.
\newblock \href {https://doi.org/10.1006/jsco.2001.0457}
  {\path{doi:10.1006/jsco.2001.0457}}.

\bibitem{chmur1991lattice}
Marek Chmur.
\newblock {T}he {L}attice of {N}atural {N}umbers and {T}he {S}ublattice of
  it.{T}he {S}et of {P}rime {N}umbers.
\newblock {\em Journal of Formalized Mathematics}, 1991.
\newblock Related file:
  \url{https://mizar.uwb.edu.pl/version/current/mml/newton.miz}.{A}ccessed
  2026-01-21.
\newblock URL: \url{https://mizar.uwb.edu.pl/JFM/pdf/nat_lat.pdf}.

\bibitem{cohen1993course}
Henri Cohen.
\newblock {\em A course in computational algebraic number theory}.
\newblock Number 138 in Graduate texts in mathematics. Springer Berlin
  Heidelberg, Berlin, 1993.
\newblock \href {https://doi.org/10.1007/978-3-662-02945-9}
  {\path{doi:10.1007/978-3-662-02945-9}}.

\bibitem{crandall2005prime}
Richard Crandall and Carl Pomerance.
\newblock {\em {P}rime {N}umbers: {A} {C}omputational {P}erspective}.
\newblock Springer-Verlag, 2 edition, 2005.
\newblock \href {https://doi.org/10.1007/0-387-28979-8}
  {\path{doi:10.1007/0-387-28979-8}}.

\bibitem{delon2021isabelle/naproche}
Adrian De~Lon, Peter Koepke, Anton Lorenzen, Adrian Marti, Marcel Schütz, and
  Makarius Wenzel.
\newblock {\em {T}he {I}sabelle/{N}aproche {N}atural {L}anguage {P}roof
  {A}ssistant}, pages 614--624.
\newblock Springer International Publishing, 2021.
\newblock \href {https://doi.org/10.1007/978-3-030-79876-5_36}
  {\path{doi:10.1007/978-3-030-79876-5_36}}.

\bibitem{eberl2025theory}
Manuel Eberl.
\newblock Theory {E}uclidean\_{A}lgorithm, 2025.
\newblock {A}ccessed 2026-01-20.
\newblock URL:
  \url{https://isabelle.in.tum.de/library/HOL/HOL-Computational_Algebra/Euclidean_Algorithm.html}.

\bibitem{forster2015algorithmische}
Otto Forster.
\newblock {\em {A}lgorithmische {Z}ahlentheorie}.
\newblock Springer Fachmedien Wiesbaden, 2015.
\newblock \href {https://doi.org/10.1007/978-3-658-06540-9}
  {\path{doi:10.1007/978-3-658-06540-9}}.

\bibitem{forster2024aribas}
Otto Forster.
\newblock Aribas.
\newblock Website, 2024.
\newblock {A}ccessed: 2025-03-07.
\newblock URL:
  \url{https://www.mathematik.uni-muenchen.de/~forster/sw/aribas.html}.

\bibitem{fsf2020gnu}
{Free Software Foundation, Inc.}
\newblock {GNU} {MP} {M}anual (version 6.3.0): {M}ultiplication {A}lgorithms.
\newblock \url{https://gmplib.org/manual/Multiplication-Algorithms.html}, 2020.
\newblock {A}ccessed 2026-02-01.

\bibitem{huber2010towards}
Simon Huber, Basil~A. Karádais, and Helmut Schwichtenberg.
\newblock {T}owards a {F}ormal {T}heory of {C}omputability.
\newblock In R.~Schindler, editor, {\em Ways of Proof Theory: Festschrift for
  W. Pohlers}, pages 257--282. De Gruyter, December 2010.
\newblock \href {https://doi.org/10.1515/9783110324907.257}
  {\path{doi:10.1515/9783110324907.257}}.

\bibitem{hurd2003verification}
Joe Hurd.
\newblock {V}erification of the {M}iller–{R}abin probabilistic primality
  test.
\newblock {\em The Journal of Logic and Algebraic Programming},
  56(1–2):3--21, May 2003.
\newblock \href {https://doi.org/10.1016/s1567-8326(02)00065-6}
  {\path{doi:10.1016/s1567-8326(02)00065-6}}.

\bibitem{ishihara2016embedding}
Hajime Ishihara and Helmut Schwichtenberg.
\newblock Embedding classical in minimal implicational logic.
\newblock {\em Mathematical Logic Quarterly}, 62(1–2):94--101, January 2016.
\newblock \href {https://doi.org/10.1002/malq.201400099}
  {\path{doi:10.1002/malq.201400099}}.

\bibitem{karadais2013towards}
Basil~A. Kar{\'a}dais.
\newblock {\em {T}owards an {A}rithmetic for {P}artial {C}omputable
  {F}unctionals}.
\newblock PhD thesis, Ludwig-Maximilians University Munich, 2013.

\bibitem{koepke2025formalizing}
Peter Koepke.
\newblock {F}ormalizing and {A}utoformalizing {Euclid}'s {L}emma in {Naproche}.
\newblock Slides:
  \url{https://github.com/naproche/Talks/blob/master/2025/koepke_formalizing-and-autoformalizing-euclids-lemma-in-naproche.pdf},
  2025.
\newblock Talk at the \emph{NatFoM} workshop at \emph{CICM 2025}, {Brasília},
  {Brazil}, ccessed 2026-01-27.

\bibitem{koepke2023formalizing}
Peter Koepke, Mateusz Marcol, and Patrick Sch{\"a}fer.
\newblock {F}ormalizing {S}ets and {N}umbers, and some of {W}iedijk's ``100
  {T}heorems'' in {N}aproche.
\newblock Online PDF, 2023.
\newblock {A}ccessed 2026-01-19.
\newblock URL: \url{https://naproche.github.io/100_theorems.ftl.pdf}.

\bibitem{kohlenbach2008applied}
Ulrich Kohlenbach.
\newblock {\em {A}pplied {P}roof {T}heory: {P}roof {I}nterpretations and their
  {U}se in {M}athematics}.
\newblock Springer Monographs in Mathematics. Springer Berlin Heidelberg, 2008.
\newblock \href {https://doi.org/10.1007/978-3-540-77533-1}
  {\path{doi:10.1007/978-3-540-77533-1}}.

\bibitem{kornilowicz2004fundamental}
Artur Korni{\l}owicz and Piotr Rudnicki.
\newblock {F}undamental {T}heorem of {A}rithmetic.
\newblock {\em Formalized Mathematics}, 12(2):179--186, 2004.
\newblock URL: \url{https://mizar.uwb.edu.pl/fm/2004-12/pdf12-2/nat_3.pdf}.

\bibitem{kreisel1959interpretation}
Georg Kreisel.
\newblock {I}nterpretation of {A}nalysis by {M}eans of {C}onstructive
  {F}unctionals of {F}inite {T}ypes.
\newblock In A.~Heyting, editor, {\em Constructivity in mathematics}, pages
  101--128. North-Holland Pub. Co., 1959.

\bibitem{koepp2023lookahead}
Nils Köpp and Helmut Schwichtenberg.
\newblock Lookahead analysis in exact real arithmetic with logical methods.
\newblock {\em Theoretical Computer Science}, 943:171--186, January 2023.
\newblock \href {https://doi.org/10.1016/j.tcs.2022.10.003}
  {\path{doi:10.1016/j.tcs.2022.10.003}}.

\bibitem{larsen1991using}
Kim~Guldstrand Larsen and Glynn Winskel.
\newblock Using information systems to solve recursive domain equations.
\newblock {\em Information and Computation}, 91(2):232--258, April 1991.
\newblock \href {https://doi.org/10.1016/0890-5401(91)90068-d}
  {\path{doi:10.1016/0890-5401(91)90068-d}}.

\bibitem{mcdt2020library}
{Mathematical Components Development Team}.
\newblock {Library mathcomp.ssreflect.prime}, 2020.
\newblock {R}eleased 26 Nov 2020. {A}ccessed 2026-01-21.
\newblock URL:
  \url{https://math-comp.github.io/htmldoc_1_12_0/mathcomp.ssreflect.prime.html}.

\bibitem{mathlibcommunity2026extended}
{Mathlib Community}.
\newblock {Mathlib.Data.Int.GCD}, 2026.
\newblock {A}ccessed 2026-01-21.
\newblock URL:
  \url{https://leanprover-community.github.io/mathlib4_docs/Mathlib/Data/Nat/Factors.html}.

\bibitem{mathlibcommunity2026factors}
{Mathlib Community}.
\newblock {Mathlib.Data.Nat.Factors}, 2026.
\newblock {A}ccessed 2026-01-21.
\newblock URL:
  \url{https://leanprover-community.github.io/mathlib4_docs/Mathlib/Data/Int/GCD.html}.

\bibitem{mathlibcommunity2026prime}
{Mathlib Community}.
\newblock {Mathlib.Data.Nat.Prime.Defs}, 2026.
\newblock {A}ccessed 2026-01-21.
\newblock URL:
  \url{https://leanprover-community.github.io/mathlib4_docs/Mathlib/Data/Nat/Prime/Defs.html}.

\bibitem{mathlibcommunity2026inifite}
{Mathlib Community}.
\newblock {Mathlib.Data.Nat.Prime.Infinite}, 2026.
\newblock {A}ccessed 2026-01-21.
\newblock URL:
  \url{https://leanprover-community.github.io/mathlib4_docs/Mathlib/Data/Nat/Prime/Infinite.html}.

\bibitem{menezes2018handbook}
Alfred~J. Menezes, Paul~C. van Oorschot, and Scott~A. Vanstone.
\newblock {\em {H}andbook of {A}pplied {C}ryptography}.
\newblock CRC Press, December 2018.
\newblock \href {https://doi.org/10.1201/9780429466335}
  {\path{doi:10.1201/9780429466335}}.

\bibitem{meurer2017sympy}
Aaron Meurer, Christopher~P. Smith, Mateusz Paprocki, Ondrej Certik, Sergey~B.
  Kirpichev, Matthew Rocklin, AMiT Kumar, Sergiu Ivanov, Jason~K. Moore, Sartaj
  Singh, Thilina Rathnayake, Sean Vig, Brian~E. Granger, Richard~P. Muller,
  Francesco Bonazzi, Harsh Gupta, Shivam Vats, Fredrik Johansson, Fabian
  Pedregosa, Matthew~J. Curry, Andy~R. Terrel, Stepan Roucka, Ashutosh Saboo,
  Isuru Fernando, Sumith Kulal, Robert Cimrman, and Anthony Scopatz.
\newblock {SymPy}: symbolic computing in {P}ython.
\newblock {\em PeerJ Computer Science}, 3:e103, January 2017.
\newblock \href {https://doi.org/10.7717/peerj-cs.103}
  {\path{doi:10.7717/peerj-cs.103}}.

\bibitem{miller1975riemann}
Gary~L. Miller.
\newblock {R}iemann’s {H}ypothesis and tests for primality.
\newblock In {\em Proceedings of seventh annual ACM symposium on Theory of
  computing}, STOC ’75, pages 234--239. ACM Press, 1975.
\newblock \href {https://doi.org/10.1145/800116.803773}
  {\path{doi:10.1145/800116.803773}}.

\bibitem{miyamoto2013phd}
Kenji Miyamoto.
\newblock {\em Program extraction from coinductive proofs and its application
  to exact real arithmetic}.
\newblock PhD thesis, Ludwig-Maximilians University Munich, 2013.

\bibitem{miyamoto2024minlog}
Kenji Miyamoto.
\newblock {T}he {M}inlog {S}ystem.
\newblock \url{https://www.mathematik.uni-muenchen.de/~logik/minlog/}, 2026.
\newblock {A}ccessed 2026-02-10.
\newblock URL: \url{https://www.mathematik.uni-muenchen.de/~logik/minlog/}.

\bibitem{miyamoto2013program}
Kenji Miyamoto, Fredrik Nordvall~Forsberg, and Helmut Schwichtenberg.
\newblock {P}rogram {E}xtraction from {N}ested {D}efinitions.
\newblock In S.~Blazy, C.~Paulin-Mohring, and D.~Pichardie, editors, {\em
  Interactive Theorem Proving}, volume 7998 of {\em LNCS}, pages 370--385.
  Springer Berlin Heidelberg, 2013.
\newblock 4th International Conference, ITP 2013, Rennes, France, July 22-26,
  2013. Proceedings.
\newblock \href {https://doi.org/10.1007/978-3-642-39634-2_27}
  {\path{doi:10.1007/978-3-642-39634-2_27}}.

\bibitem{miyamoto2014program}
Kenji Miyamoto and Helmut Schwichtenberg.
\newblock Program extraction in exact real arithmetic.
\newblock {\em Mathematical Structures in Computer Science}, 25(8):1692--1704,
  November 2014.
\newblock \href {https://doi.org/10.1017/s0960129513000327}
  {\path{doi:10.1017/s0960129513000327}}.

\bibitem{muellerstach2011elementare}
Stefan Müller-Stach and Jens Piontkowski.
\newblock {\em {E}lementare und algebraische {Z}ahlentheorie}.
\newblock Vieweg+Teubner, 2011.
\newblock \href {https://doi.org/10.1007/978-3-8348-8263-9}
  {\path{doi:10.1007/978-3-8348-8263-9}}.

\bibitem{okazaki2012extended}
Hiroyuki Okazaki, Yosiki Aoki, and Yasunari Shidama.
\newblock Extended {E}uclidean {A}lgorithm and {CRT} {A}lgorithm.
\newblock {\em Formalized Mathematics}, 20(2):175--179, December 2012.
\newblock \href {https://doi.org/10.2478/v10037-012-0020-2}
  {\path{doi:10.2478/v10037-012-0020-2}}.

\bibitem{paulson2023verifying}
Lawrence~C. Paulson.
\newblock Verifying the binary algorithm for greatest common divisors, 2023.
\newblock {A}ccessed 2026-01-20.
\newblock URL:
  \url{https://lawrencecpaulson.github.io/2023/02/22/Binary_GCD.html}.

\bibitem{pelletier1999brief}
Francis~Jeffry Pelletier.
\newblock {A} {B}rief {H}istory of {N}atural {D}eduction.
\newblock {\em History and Philosophy of Logic}, 20(1):1--31, January 1999.
\newblock \href {https://doi.org/10.1080/014453499298165}
  {\path{doi:10.1080/014453499298165}}.

\bibitem{petrakis2013advances}
Iosif Petrakis.
\newblock {Advances in the Theory of ComputableFunctionals TCF+ due to its
  Implementation}, 2013.
\newblock online: \url{https://www.math.lmu.de/~petrakis/TCF+.pdf}.
\newblock URL: \url{https://www.math.lmu.de/~petrakis/TCF+.pdf}.

\bibitem{platek1966foundations}
Richard~Alan Platek.
\newblock {\em Foundations of recursion theory}.
\newblock PhD thesis, Stanford University, 1966.

\bibitem{pomerance1982analysis}
Carl Pomerance.
\newblock {A}nalysis and {C}omparison of {S}ome {I}nteger {F}actoring
  {A}lgorithms.
\newblock In Jr. H.W.~Lenstra and R.~Tijdeman, editors, {\em Computational
  Methods in Number Theory}, pages 89--139. Math. Centrum, Amsterdam, 1982.

\bibitem{prawitz2006natural}
Dag Prawitz.
\newblock {\em Natural deduction: A proof-theoretical study}.
\newblock @Dover books on mathematics. Courier Dover Publications, Mineola,
  N.Y, 2006.
\newblock Includes bibliographical references (p. [106]-109) and indexes.

\bibitem{pak2024elementary}
Karol Pąk.
\newblock {E}lementary {N}umber {T}heory {P}roblems. {P}art {XVI}.
\newblock {\em Formalized Mathematics}, 32(1):203--212, August 2024.
\newblock \href {https://doi.org/10.2478/forma-2024-0017}
  {\path{doi:10.2478/forma-2024-0017}}.

\bibitem{rabin1980probabilistic}
Michael~O. Rabin.
\newblock Probabilistic algorithm for testing primality.
\newblock {\em Journal of Number Theory}, 12(1):128--138, February 1980.
\newblock \href {https://doi.org/10.1016/0022-314x(80)90084-0}
  {\path{doi:10.1016/0022-314x(80)90084-0}}.

\bibitem{rijke2026agda}
Egbert Rijke, Fredrik Bakke, and Vojtěch Štěpančík.
\newblock The agda-unimath library, 2026.
\newblock URL: \url{https://unimath.github.io/agda-unimath/}.

\bibitem{rpdcc2025numtheory}
{Rocq Prover Developers and Coq/Stdlib Contributors}.
\newblock {Library Stdlib.PArith.\allowbreak BinPosDef}, 2025.
\newblock {A}ccessed 2026-01-21.
\newblock URL:
  \url{https://rocq-prover.org/doc/V9.1.0/stdlib/Stdlib.ZArith.Znumtheory.html}.

\bibitem{rpdcc2025stein}
{Rocq Prover Developers and Coq/Stdlib Contributors}.
\newblock {Library Stdlib.ZArith.\allowbreak Znumtheory}, 2025.
\newblock {A}ccessed 2026-01-21.
\newblock URL:
  \url{https://rocq-prover.org/doc/v9.0/stdlib/Stdlib.PArith.BinPosDef.html}.

\bibitem{schwichtenberg1991primitive}
Helmut Schwichtenberg.
\newblock {P}rimitive {R}ecursion on the {P}artial {C}ontinuous {F}unctionals.
\newblock In M.~Broy, editor, {\em Informatik und Mathematik}, pages 251--268.
  Springer Berlin Heidelberg, 1991.
\newblock \href {https://doi.org/10.1007/978-3-642-76677-0_18}
  {\path{doi:10.1007/978-3-642-76677-0_18}}.

\bibitem{schwichtenberg1993proofs}
Helmut Schwichtenberg.
\newblock {P}roofs as {P}rograms.
\newblock In P.~Aczel, H.~Simmons, and S.~Wainer, editors, {\em Proof Theory: A
  selection of papers from the Leeds Proof Theory Programme 1990}, page
  79–114, Cambridge, 1993. Cambridge University Press.
\newblock Title from publisher's bibliographic system (viewed on 24 Feb 2016).
\newblock \href {https://doi.org/10.1017/CBO9780511896262.005}
  {\path{doi:10.1017/CBO9780511896262.005}}.

\bibitem{schwichtenberg2006minlog}
Helmut Schwichtenberg.
\newblock Minlog.
\newblock In Freek Wiedijk, editor, {\em The Seventeen Provers of the World},
  volume 3600, pages 151--157. Springer Berlin Heidelberg, 2006.
\newblock \href {https://doi.org/10.1007/11542384_19}
  {\path{doi:10.1007/11542384_19}}.

\bibitem{schwichtenberg2019program}
Helmut Schwichtenberg.
\newblock {P}rogram {E}xtraction from {P}roofs: {T}he {F}an {T}heorem for
  {U}niformly {C}oconvex {B}ars.
\newblock In S.~Centrone, S.~Negri, D.~Sarikaya, and P.~Schuster, editors, {\em
  Mathesis Universalis, Computability and Proof}, volume 412 of {\em Synthese
  Library}, pages 333--341. Springer International Publishing, 2019.
\newblock \href {https://doi.org/10.1007/978-3-030-20447-1_17}
  {\path{doi:10.1007/978-3-030-20447-1_17}}.

\bibitem{schwichtenberg2023computational}
Helmut Schwichtenberg.
\newblock {\em Computational {A}spects of {B}ishop's {C}onstructive
  {M}athematics}, pages 715--748.
\newblock Cambridge University Press, April 2023.
\newblock \href {https://doi.org/10.1017/9781009039888.027}
  {\path{doi:10.1017/9781009039888.027}}.

\bibitem{schwichtenberg2023logic}
Helmut Schwichtenberg.
\newblock Logic for {E}xact {R}eal {A}rithmetic: {M}ultiplication.
\newblock In {\em Mathematics for Computation (M4C)}, chapter~3, pages 39--69.
  World Scientific, April 2023.
\newblock \href {https://doi.org/10.1142/9789811245220_0003}
  {\path{doi:10.1142/9789811245220_0003}}.

\bibitem{schwichtenberg2025logic}
Helmut Schwichtenberg.
\newblock {Logic II: proofs and programs}, July 2025.
\newblock Lecture notes, summer term 2025.
\newblock URL:
  \url{https://www.mathematik.uni-muenchen.de/~schwicht/lectures/logic/ss25/tcf.pdf}.

\bibitem{schwichtenberg2016higmans}
Helmut Schwichtenberg, Monika Seisenberger, and Franziskus Wiesnet.
\newblock {H}igman's {L}emma and its {C}omputational {C}ontent.
\newblock In R.~Kahle, T.~Strahm, and T.~Studer, editors, {\em Advances in
  Proof Theory}, pages 353--375. Springer International Publishing, 2016.
\newblock \href {https://doi.org/10.1007/978-3-319-29198-7_11}
  {\path{doi:10.1007/978-3-319-29198-7_11}}.

\bibitem{schwichtenberg2013minimal}
Helmut Schwichtenberg and Christoph Senjak.
\newblock Minimal from classical proofs.
\newblock {\em Annals of Pure and Applied Logic}, 164(6):740--748, June 2013.
\newblock \href {https://doi.org/10.1016/j.apal.2012.05.009}
  {\path{doi:10.1016/j.apal.2012.05.009}}.

\bibitem{schwichtenberg2012proofs}
Helmut Schwichtenberg and Stanley~S. Wainer.
\newblock {\em {P}roofs and {C}omputations}.
\newblock Perspectives in Logic. Cambridge University Press, December 2012.
\newblock \href {https://doi.org/10.1017/cbo9781139031905}
  {\path{doi:10.1017/cbo9781139031905}}.

\bibitem{schwichtenberg2017tiered}
Helmut Schwichtenberg and Stanley~S. Wainer.
\newblock {T}iered {A}rithmetics.
\newblock In G.~J\"ager and W.~Sieg, editors, {\em Feferman on Foundations},
  volume~13, pages 145--168. Springer International Publishing, 2017.
\newblock \href {https://doi.org/10.1007/978-3-319-63334-3_6}
  {\path{doi:10.1007/978-3-319-63334-3_6}}.

\bibitem{schwichtenberg2021logic}
Helmut Schwichtenberg and Franziskus Wiesnet.
\newblock Logic for exact real arithmetic.
\newblock {\em Logical Methods in Computer Science}, 17(2):7:1--7:27, April
  2021.
\newblock \href {https://doi.org/10.23638/LMCS-17(2:7)2021}
  {\path{doi:10.23638/LMCS-17(2:7)2021}}.

\bibitem{scott1982domains}
Dana~S. Scott.
\newblock {\em Domains for denotational semantics}, pages 577--610.
\newblock Springer Berlin Heidelberg, 1982.
\newblock \href {https://doi.org/10.1007/bfb0012801}
  {\path{doi:10.1007/bfb0012801}}.

\bibitem{scott1993type}
Dana~S. Scott.
\newblock A type-theoretical alternative to {ISWIM, CUCH, OWHY}.
\newblock {\em Theoretical Computer Science}, 121(1):411--440, 1993.
\newblock URL:
  \url{https://www.sciencedirect.com/science/article/pii/030439759390095B},
  \href {https://doi.org/10.1016/0304-3975(93)90095-B}
  {\path{doi:10.1016/0304-3975(93)90095-B}}.

\bibitem{stein1967computational}
Josef Stein.
\newblock Computational problems associated with {R}acah algebra.
\newblock {\em Journal of Computational Physics}, 1(3):397--405, February 1967.
\newblock \href {https://doi.org/10.1016/0021-9991(67)90047-2}
  {\path{doi:10.1016/0021-9991(67)90047-2}}.

\bibitem{tabacznyj2025theorya}
Christophe Tabacznyj, Lawrence Paulson, Amine Chaieb, Thomas Rasmussen, Jeremy
  Avigad, Tobias Nipkow, and Manuel Eberl.
\newblock Theory {P}rimes, 2025.
\newblock {A}ccessed 2026-01-20.
\newblock URL:
  \url{https://isabelle.in.tum.de/library/HOL/HOL-Computational_Algebra/Primes.html}.

\bibitem{tabacznyj2025theoryb}
Christophe Tabacznyj, Lawrence~C. Paulson, Amine Chaieb, Thomas~M. Rasmussen,
  Jeremy Avigad, and Tobias Nipkow.
\newblock Theory {GCD}, 2025.
\newblock {A}ccessed 2026-01-20.
\newblock URL: \url{https://isabelle.in.tum.de/dist/library/HOL/HOL/GCD.html}.

\bibitem{thery2007primality}
Laurent Théry and Guillaume Hanrot.
\newblock {\em {P}rimality {P}roving with {E}lliptic {C}urves}, pages 319--333.
\newblock Springer Berlin Heidelberg, 2007.
\newblock \href {https://doi.org/10.1007/978-3-540-74591-4_24}
  {\path{doi:10.1007/978-3-540-74591-4_24}}.

\bibitem{troelstra1973metamathematical}
Anne~Sjerp Troelstra, editor.
\newblock {\em Metamathematical {I}nvestigation of {I}ntuitionistic
  {A}rithmetic and {A}nalysis}.
\newblock Springer Berlin Heidelberg, 1973.
\newblock \href {https://doi.org/10.1007/bfb0066739}
  {\path{doi:10.1007/bfb0066739}}.

\bibitem{trostle2014fast}
Anne Trostle and Mark Bickford.
\newblock {A} {F}ast {A}lgorithm for the {I}nteger {S}quare {R}oot.
\newblock \url{https://nuprl-web.cs.cornell.edu/MathLibrary/integer_sqrt/},
  June 2014.
\newblock {A}ccessed 2026-02-01.

\bibitem{trybulec1993euclids}
Andrzej Trybulec and Yatsuka Nakamura.
\newblock {E}uclid's {A}lgorithm.
\newblock {\em Journal of Formalized Mathematics}, 5, 1993.
\newblock URL: \url{https://mizar.uwb.edu.pl/JFM/pdf/ami_4.pdf}.

\bibitem{wenzel2025theory}
Markus Wenzel, Freek Wiedijk, and Stefan Berghofer.
\newblock {T}heory {E}uclid, 2025.
\newblock {A}ccessed 2026-01-20.
\newblock URL:
  \url{https://isabelle.in.tum.de/library/HOL/HOL-Proofs-Extraction/Euclid.html}.

\bibitem{wiedijk2025formalizing}
F.~Wiedijk.
\newblock {F}ormalizing 100 {T}heorems, 2025.
\newblock {A}ccessed 2026-01-18.
\newblock URL: \url{https://www.cs.ru.nl/F.Wiedijk/100/index.html}.

\bibitem{wiesnet2017konstruktive}
Franziskus Wiesnet.
\newblock {K}onstruktive {A}nalysis mit exakten reellen {Z}ahlen.
\newblock Master's thesis, Ludwig-Maximilians Universität München, September
  2017.
\newblock URL: \url{https://www.wiesnet.eu/pdf/wiesnet2017konstrutive.pdf}.

\bibitem{wiesnet2018introduction}
Franziskus Wiesnet.
\newblock Introduction to {M}inlog.
\newblock In Klaus Mainzer, Peter Schuster, and Helmut Schwichtenberg, editors,
  {\em Proof and Computation}, pages 233--288. World Scientific, May 2018.
\newblock \href {https://doi.org/10.1142/9789813270947_0008}
  {\path{doi:10.1142/9789813270947_0008}}.

\bibitem{wiesnet2021computational}
Franziskus Wiesnet.
\newblock {\em {T}he {C}omputational {C}ontent of {A}bstract {A}lgebra and
  {A}nalysis}.
\newblock PhD thesis, Ludwig-Maximilians-Universität München, Università
  degli Studi di Verona, Università degli Studi di Trento, 2021.
\newblock \href {https://doi.org/10.5282/EDOC.28929}
  {\path{doi:10.5282/EDOC.28929}}.

\bibitem{wiesnet2024minlog}
Franziskus Wiesnet.
\newblock {M}inlog-{K}urs 2024.
\newblock
  \url{https://www.youtube.com/playlist?list=PLD87fNDrm1skaFxUA-ArQjmqj50IARRPf},
  2024.
\newblock {Y}ouTube Playlist.
\newblock URL:
  \url{https://www.youtube.com/playlist?list=PLD87fNDrm1skaFxUA-ArQjmqj50IARRPf}.

\bibitem{wiesnet2026minlogarith}
Franziskus Wiesnet.
\newblock {MinlogArith (v1.0.0)}.
\newblock Zenodo, 2026.
\newblock \href {https://doi.org/10.5281/zenodo.18613914}
  {\path{doi:10.5281/zenodo.18613914}}.

\bibitem{wiesnet2022limits}
Franziskus Wiesnet and Nils Köpp.
\newblock Limits of real numbers in the binary signed digit representation.
\newblock {\em Logical Methods in Computer Science}, 18(3), August 2022.
\newblock \href {https://doi.org/10.46298/lmcs-18(3:24)2022}
  {\path{doi:10.46298/lmcs-18(3:24)2022}}.

\end{thebibliography}
\newpage

\section{Runtime Measurements}
\label{Sec:Runtime}
To generate numbers with that many digits, we used the random number generator provided on the following webpage:  \url{https://numbergenerator.org/random-100000-digit-number-generator}.
Note that the number of digits is given in the decimal representation.
Larger prime numbers are taken from \url{https://www.walter-fendt.de/html5/mde/primenumbers_de.htm}.

The red line in each graphic shows the least-squares approximating polynomial of degree at most $9$ that satisfies $f(0)=0$.  If degree 9 did not produce a meaningful result, we used  a smaller degree bound instead. This is noted separately in each case. Please note that this does not mean that the runtime is actually given by this polynomial. It is only the best polynomial approximation that can be determined from the measurement data.

\subsection{Square Root Algorithm by Interval Bisection}
\label{Sec:AppSqrt}
 \vspace{5mm}
\begin{minipage}[c]{0.6\linewidth}

  \centering
  \includegraphics[width=\linewidth]{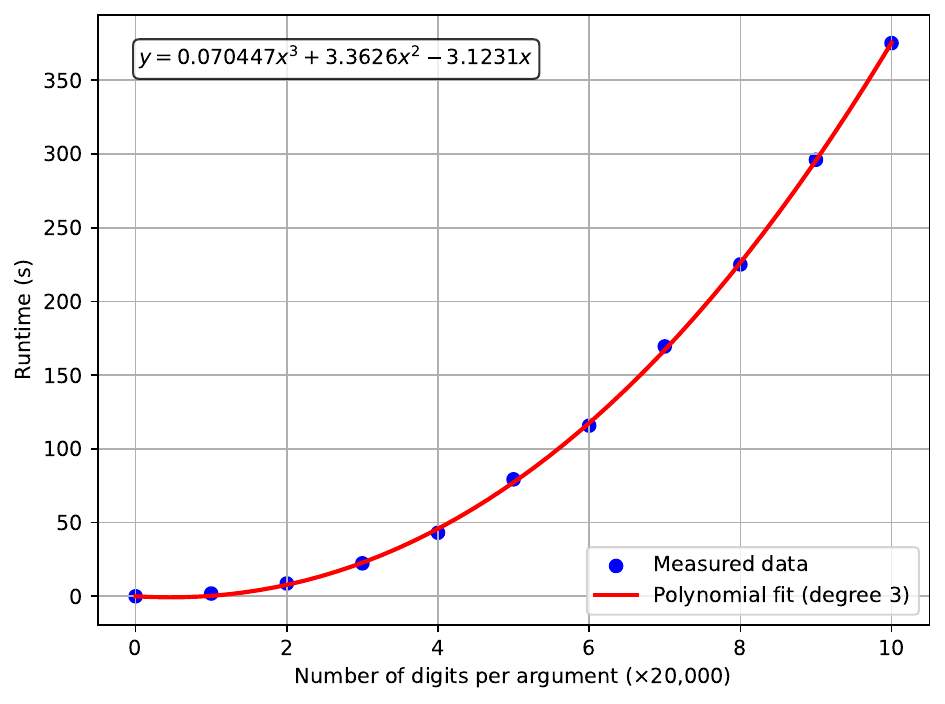}
\end{minipage}\hfill
\begin{minipage}[c]{0.35\linewidth}
  \centering
  \small
  \begin{tabular}{r r}
\hline
Digits & Runtime (s) \\
\hline
20\,000  & 1.88 \\
40\,000   & 8.69 \\
60\,000  & 22.37 \\
80\,000  & 43.12 \\
100\,000  & 79.33 \\
120\,000  & 115.76  \\
140\,000  & 169.47 \\
160\,000  & 225.10 \\
180\,000  & 296.10 \\
200\,000 & 375.22 \\
\hline
\end{tabular}
\end{minipage}
\subsection{Fast Square Root Algorithm}
\label{Sec:AppFastSqrt}
 \vspace{5mm}
\begin{minipage}[c]{0.6\linewidth}

  \centering
  \includegraphics[width=\linewidth]{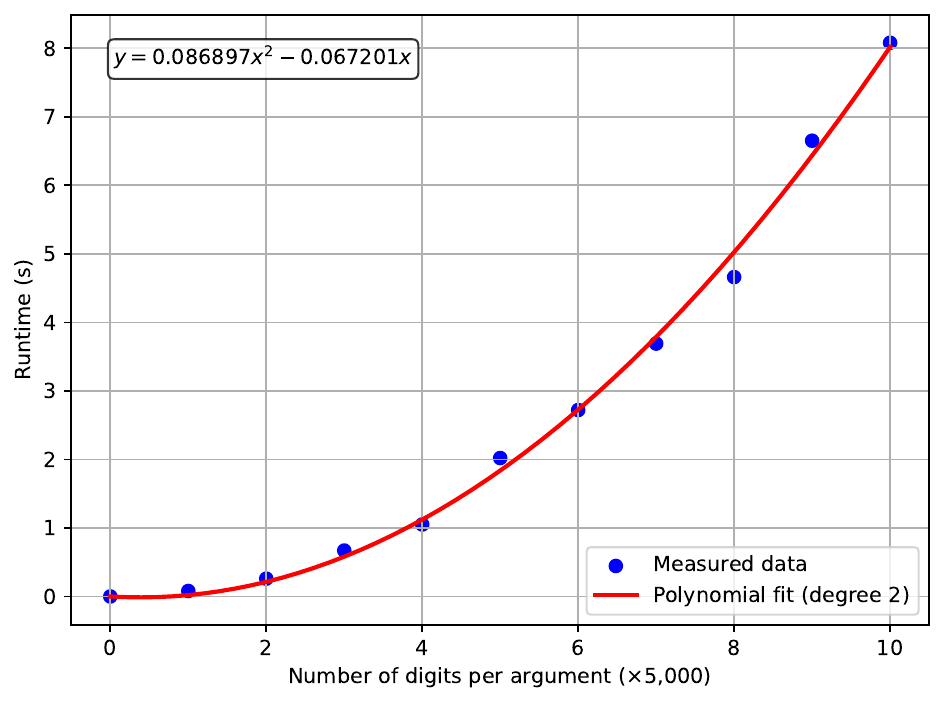}\flushleft
  \small The approximating polynomial was capped at degree~6.
\end{minipage}\hfill
\begin{minipage}[c]{0.35\linewidth}
  \centering
  \small
  \begin{tabular}{r r}
\hline
Digits & Runtime (s) \\
\hline
5\,000  & 0.08 \\
10\,000   & 0.26 \\
15\,000  & 0.67 \\
20\,000  & 1.05 \\
25\,000  & 2.02 \\
30\,000  & 2.72  \\
35\,000  & 3.69 \\
40\,000  & 4.66 \\
45\,000  & 6.65 \\
50\,000 & 8.08\\
\hline
\end{tabular}
\end{minipage}

\subsection{Euclidean Algorithm}
\label{Sec:AppEuclid}
\vspace{5mm}
\begin{minipage}[c]{0.6\linewidth}

  \centering
  \includegraphics[width=\linewidth]{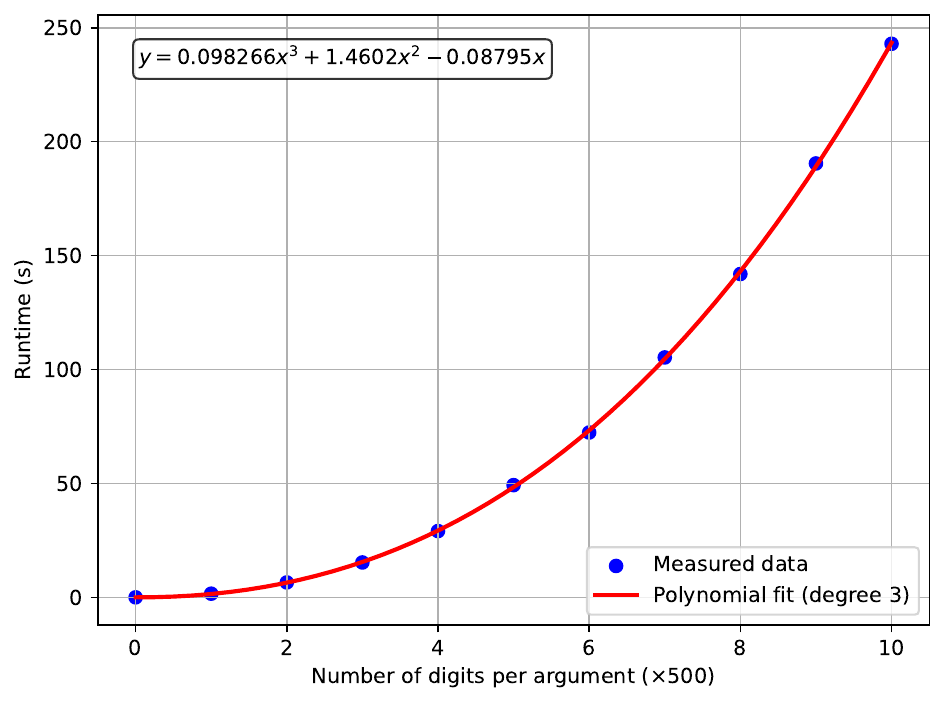}
\end{minipage}\hfill
\begin{minipage}[c]{0.35\linewidth}
  \centering
  \small
  \begin{tabular}{r r}
\hline
Digits & Runtime (s) \\
\hline
500  & 1.61\\
1\,000   & 6.55\\
1\,500 & 15.31 \\
2\,000  & 29.09 \\
2\,500  & 49.25 \\
3\,000  & 72.34\\
3\,500  & 105.28\\
4\,000  & 141.89\\
4\,500 & 190.42\\
5\,000 & 242.94\\
\hline
\end{tabular}
\end{minipage}
\subsection{Stein's Algorithm}
\label{Sec:AppStein}
\vspace{5mm}
\begin{minipage}[c]{0.6\linewidth}

  \centering
  \includegraphics[width=\linewidth]{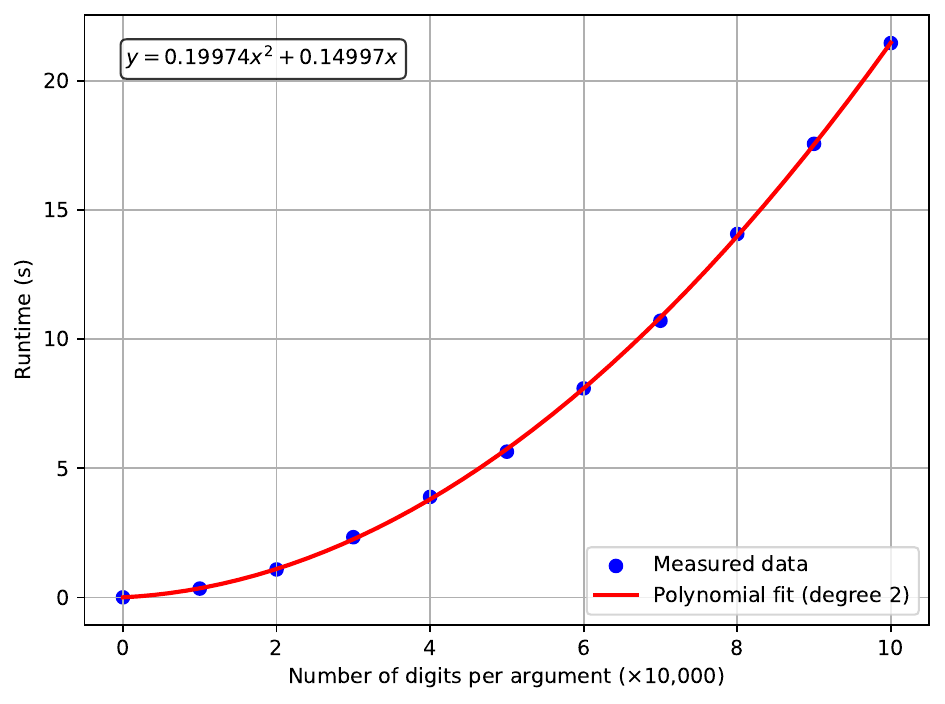}
\end{minipage}\hfill
\begin{minipage}[c]{0.35\linewidth}
  \centering
  \small
  \begin{tabular}{r r}
\hline
Digits & Runtime (s) \\
\hline
10\,000  & 0.34 \\
20\,000   & 1.08 \\
30\,000  & 2.33 \\
40\,000  & 3.89 \\
50\,000  & 5.64 \\
60\,000  & 8.09 \\
70\,000  & 10.71 \\
80\,000  & 14.07 \\
90\,000  & 17.56 \\
100\,000 & 21.46 \\
\hline
\end{tabular}
\end{minipage}
\subsection{Extended Stein's Algorithm}
\label{Sec:AppExtStein}

\vspace{5mm}
\begin{minipage}[c]{0.6\linewidth}

  \centering
  \includegraphics[width=\linewidth]{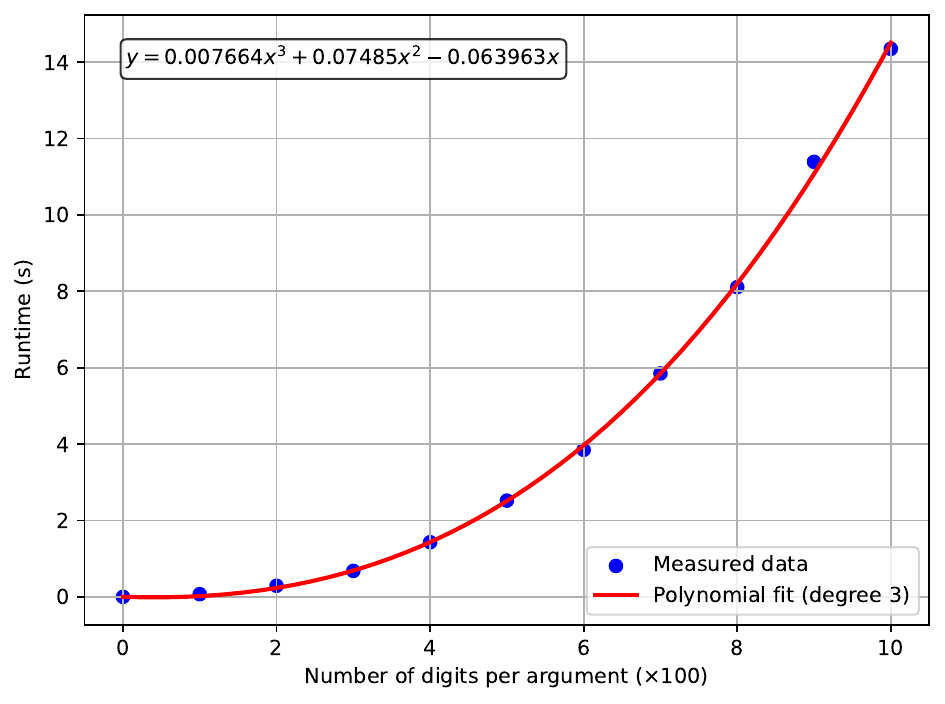}
  \flushleft
  \small The approximating polynomial was capped at degree~3.
\end{minipage}\hfill
\begin{minipage}[c]{0.35\linewidth}
  \centering
  \small
  \begin{tabular}{r r}
\hline
Digits & Runtime (s) \\
\hline
100   & 0.07 \\
200   & 0.29  \\
300   & 0.68 \\
400   & 1.43 \\
500   & 2.52 \\
600   & 3.85 \\
700   & 5.85 \\
800   & 8.11 \\
900   & 11.39 \\
1\,000 & 14.35 \\
\hline
\end{tabular}
\end{minipage}
\subsection{Extended Euclid's Algorithm (for binary numbers)}
\label{Sec:AppExtEuclid}
\vspace{5mm}
\begin{minipage}[c]{0.6\linewidth}

  \centering
  \includegraphics[width=\linewidth]{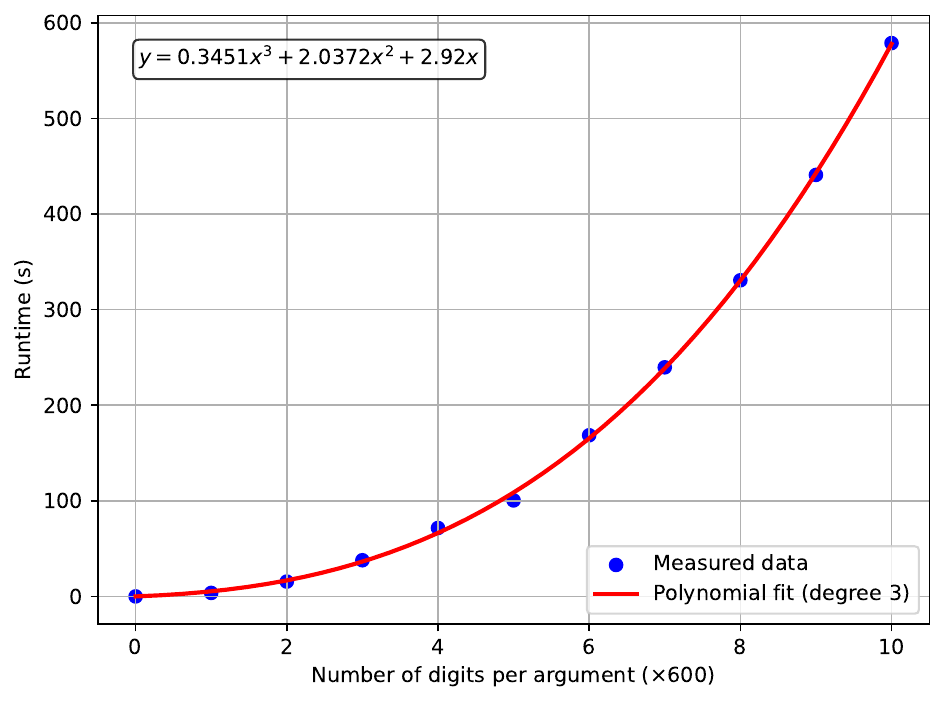}
\end{minipage}\hfill
\begin{minipage}[c]{0.35\linewidth}
  \centering
  \small
\begin{tabular}{r r}
\hline
Digits & Runtime (s) \\
\hline
600   & 3.65   \\
1\,200 & 15.51  \\
1\,800 & 37.83  \\
2\,400 & 71.56  \\
3\,000 & 100.44 \\
3\,600 & 168.59\\
4\,200 & 239.67\\
4\,800 & 330.71\\
5\,400 & 440.97 \\
6\,000 & 578.88 \\
\hline
\end{tabular}
\end{minipage}

\subsection{Existence of the Prime Factorisation}
\label{Sec:AppPrimeFactor}
\vspace{5mm}
\begin{minipage}[c]{0.4\linewidth}
\begin{tabular}{r r}
\hline
Prime number & Runtime (s) \\
\hline
10\,000\,019 & 2.39   \\
20\,000\,003 & 4.54  \\
30\,000\,001 & 6.64   \\
40\,000\,003 & 8.87  \\
50\,000\,017 & 10.74 \\
60\,000\,011 & 12.79\\
70\,000\,027 & 15.21 \\
80\,000\,023 & 17.14 \\
90\,000\,049 & 18.78 \\
100\,000\,007 & 20.85 \\
110\,000\,017 & 23.23  \\
120\,000\,007 &	24.87\\
\hline
\end{tabular}
\end{minipage}\hfill
\begin{minipage}[c]{0.4\linewidth}
\begin{tabular}{r r}
\hline
Prime power & Runtime (s) \\
\hline
$2^{1000}$ & 0.06\\
$3^{1000}$ & 28.01\\
$5^{1000}$ & 58.76\\
$47^{40}$ & 2.63\\
$47^{80}$ & 10.54\\
$47^{120}$ & 24.34\\
$47^{160}$ &  42.70 \\
$47^{200}$ & 66.32 \\
$101^{20}$ & 1.80\\
$131^{20}$ & 2.49\\
$233^{20}$ & 5.21\\
$1013^{20}$ & 34.90\\
\hline
\end{tabular}
\end{minipage}\vspace{5mm}
\noindent
\begin{minipage}[c]{0.4\linewidth}
\begin{tabular}{r r}
\hline
Twin primes & Runtime (s) \\
\hline
$1\,019\cdot 1\,021$ & 0.67\\
$2\,027\cdot 2\,029$ & 2.18\\
$3\,167\cdot 3\,169$ & 4.83\\
$4\,001\cdot 4\,003$ & 7.58\\
$5\,009\cdot 5\,011$ & 11.52\\
$6\,089\cdot 6\,091$ & 16.67\\
$7\,127\cdot 7\,129$ & 22.35\\
$8\,009\cdot 8\,011$ & 27.52\\
$8\,999\cdot 9\,001$ & 34.45 \\
$10\,007\cdot 10\,009$ &  42.93\\
\hline
\end{tabular}
\end{minipage}

\subsection{Uniqueness of the Prime Factorisation}
\label{Sec:AppUniquess}
\vspace{5mm}
\begin{tabular}{r r}
\hline
Prime factors & Runtime (s) \\
\hline
0 & 0.01 \\
50   & 0.28   \\
100 & 2.32 \\
150 & 10.79  \\
200 & 28.68  \\
250 & 61.74 \\
300 & 114.82\\
350 & 203.70\\
400 & 310.26\\
450 & 489.55 \\
500 & 699.24 \\
\hline
\end{tabular}
\end{document}